\theoremstyle{plain}
\newtheorem{theorem}{Theorem}
\newtheorem{corollary}[theorem]{Corollary}
\newtheorem{lemma}[theorem]{Lemma}
\newtheorem{proposition}[theorem]{Proposition}
\theoremstyle{definition}\newtheorem{definition}[theorem]{Definition}
\newtheorem{example}[theorem]{Example}
\theoremstyle{remark}
\newtheorem{remark}[theorem]{Remark}
\numberwithin{theorem}{section}
\theoremstyle{plain}
\newtheorem*{conjHochster}{Hochster's Conjecture}
\newtheorem*{conjSerre}{Serre's Conjectures}
\newenvironment{ii}
{ \begin{enumerate}}
{\end{enumerate}}
\newcommand{\Spec}[1]{\mathrm{Spec}(#1)}
\newcommand{\Add}{\mbox{\rm{Add\,}}}
\newcommand{\Prod}{\mbox{\rm{Prod\,}}}
\newcommand{\Gen}{\mbox{\rm{Gen\,}}}
\newcommand{\Cog}{\mbox{\rm{Cog\,}}}
\newcommand{\Ass}[1]{\mathrm{Ass} \,#1}
\newcommand{\Coass}[1]{\mathrm{Coass} \,#1}
\newcommand{\Img}{\mbox{\rm{Im\,}}}
\newcommand{\Hom}[3]{\mbox{\rm{Hom}}_{#1}(#2,#3)}
\newcommand{\Ext}[4]{\mbox{\rm{Ext}}^{#1}_{#2}(#3,#4)}
\newcommand{\Tor}[4]{\mbox{\rm{Tor}}_{#1}^{#2}(#3,#4)}
\newcommand{\rfmod}[1]{\mbox{\rm{mod}--}{#1}}
\newcommand{\rmod}[1]{\mbox{\rm{Mod}--}{#1}}
\newcommand{\lfmod}[1]{{#1}\mbox{--\rm{mod}}}
\newcommand{\lmod}[1]{{#1}\mbox{--\rm{Mod}}}
\newcommand{\ModR}{\rmod R}
\newcommand{\pd}[2]{\mathrm{proj.dim}_{#1}{#2}}
\newcommand{\fd}[2]{\mathrm{flat.dim}_{#1}{#2}}
\newcommand{\id}[2]{\mathrm{inj.dim}_{#1}{#2}}
\newcommand{\dt}[1]{\mathrm{depth}\,{#1}}
\newcommand{\Kd}[1]{\mathrm{Kdim}\,{#1}}
\newcommand{\htt}[1]{\mathrm{ht}\;{#1}}
\newcommand{\Ann}[1]{\mbox{\rm{Ann}}(#1)}
\newcommand{\Supp}[1]{\mbox{\rm{Supp}} \,#1}
\newcommand{\Syz}[2]{\Omega^{#1}(#2)}
\newcommand{\Cosyz}[2]{\mho_{#1}(#2)}
\newcommand{\Tr}[1]{\mathrm{\rm Tr}({#1})}
\newcommand{\card}[1]{\left\lvert{#1}\right\rvert}
\newcommand{\p}{\mathfrak{p}}
\newcommand{\q}{\mathfrak{q}}
\newcommand{\m}{\mathfrak{m}}
\newcommand{\Z}{\mathbb{Z}}
\newcommand{\A}{\mathcal{A}}
\newcommand{\C}{\mathcal{C}}
\newcommand{\F}{\mathcal{F}}
\newcommand{\G}{\mathcal{G}}
\newcommand{\clS}{\mathcal{S}}
\newcommand{\T}{\mathcal{T}}
\newcommand{\D}{\mathcal{D}}
\newcommand{\la}{\longrightarrow}
\renewcommand{\iff}{if and only if }
\newcommand{\st}{such that }
\newcommand{\wrt}{with respect to }
\begin{document}

% --- Basic information about the paper
\title{Tilting, cotilting, and spectra of commutative noetherian rings}

\author{Lidia Angeleri H\" ugel}
\address{Dipartimento di Informatica -
Settore di Matematica\\ 
Universit\`a degli Studi di Verona\\
Strada le Grazie 15 - Ca' Vignal\\
37134 Verona, Italy}
\email{lidia.angeleri@univr.it}

\author{David Posp{\'\i}{\v s}il} 
\author{Jan \v{S}\v{t}ov\'\i\v{c}ek} 
\author{Jan Trlifaj}
\address{Charles University, Faculty of Mathematics and Physics, Department of Algebra \\
Sokolovsk\'{a} 83, 186 75 Prague 8, Czech Republic}
\email{dpos@karlin.mff.cuni.cz}
\email{stovicek@karlin.mff.cuni.cz}
\email{trlifaj@karlin.mff.cuni.cz}

\date{\today}
\subjclass[2000]{Primary: 13C05, 13E05, 16D90. Secondary: 13C14, 13C60, 13D07, 16E30.}
\keywords{Commutative noetherian ring, tilting module, cotilting module, Zariski spectrum, Cohen-Macaulay module.}
\thanks{Research supported by GA\v{C}R 201/09/0816, GA\v{C}R 201/09/H012, GA\v{C}R P201/10/P084, as well as by  MEC-DGESIC (Spain) through Project MTM2008-06201-C02-01, and by the Comissionat Per Universitats i Recerca de la Generalitat de Catalunya through Project 2009 SGR 1389.}

\begin{abstract}
We classify all tilting and cotilting classes over commutative noetherian rings in terms of descending sequences of specialization closed subsets of the Zariski spectrum. Consequently, all resolving subcategories of finitely generated modules of bounded projective dimension are classified. We also relate our results to Hochster's conjecture on the existence of finitely generated maximal Cohen-Macaulay modules.
\end{abstract}

\maketitle

% ==============================================================================
\section*{Introduction}

It is well known that the Zariski spectrum of a commutative noetherian ring $R$ can be used to classify various structures over $R$. For example, it was shown by Gabriel in 1962 that the hereditary torsion pairs in the module category $\rmod R$ are  parametrized by the subsets of $\Spec{R}$ that are closed under specialization. An analogous result holds true at the level of the derived category: based on work of Hopkins, a one-to-one correspondence between the specialization closed subsets of $\Spec{R}$ and the smashing subcategories of the unbounded derived category $\mathcal D(R)$ was   established by  Neeman in 1992.

In the present paper, we restrict to specialization closed subsets of $\Spec{R}$ that do not contain associated primes of $R$, and show that they parametrize all $1$-cotilting classes of $R$-modules. We then use this approach to give for each $n \geq 1$ a complete classification of $n$-tilting and $n$-cotilting classes in $\rmod R$ 
in terms of finite sequences of subsets of the Zariski spectrum of $R$ (see Theorem~\ref{thm:class-main} below).  

While classification results of this kind are usually proved by considering the tilting setting first and then passing to the cotilting one by a sort of duality, the approach applied here is the very opposite. The key point rests in an analysis of the associated primes of cotilting classes and their cosyzygy classes. The classification of the tilting classes comes a posteriori, by employing the Auslander-Bridger transpose. 
For $n = 1$, we prove an additional result: In Theorem~\ref{thm:cofinite type}, we show that all $1$-cotilting modules over one-sided noetherian rings are of cofinite type, that is, equivalent to duals of $1$-tilting modules. 

We also prove several results for tilting and cotilting classes in the setting of commutative noetherian rings which fail for general rings:

\begin{ii}
\item For each $n \geq 1$, the elementary duality  
gives a bijection between $n$-tilting and $n$-cotilting classes of modules. (For general rings, there are more 1-cotilting classes than duals of 1-tilting classes: Bazzoni constructed such examples for certain commutative non-noetherian rings in~\cite{B}.)  

\item All $n$-cotilting classes are closed under taking injective envelopes by Proposition~\ref{prop:k(p)_inside}(ii).
In particular, $1$-cotilting classes are precisely the torsion-free classes of faithful hereditary torsion pairs (Theorem~\ref{thm:class-1-cot}). (Note that $1$-cotilting classes over general rings need not be closed under injective envelopes; see~\cite[Theorem 2.5]{D}.)

\item Up to adding an injective direct summand, a minimal cosyzygy of an $n$-cotilting module is $(n-1)$-cotilting (Corollary~\ref{cor:module_shifting}). (Again, this typically fails for non-commutative rings, even for finite dimensional algebras over a field, since the cosyzygy often has self-extensions.)
\end{ii}

\smallskip

Although the tilting and cotilting modules over commutative rings are inherently infinitely generated in all non-trivial cases, our results have consequences for finitely generated modules as well.

First, as a side result we classify all resolving subcategories of finitely generated modules of bounded projective dimension in Corollary~\ref{cor:parres} 
\footnote{Added in proof: An alternative description of resolving subcategories of finitely generated modules of bounded projective dimension in terms of grade consistent functions on $\Spec{R}$ has recently been obtained by Dao and Takahashi \cite{DT}.} and prove that they hardly ever provide for approximations. 

Secondly, we relate our results to a conjecture due to Hochster claiming the existence of finitely generated maximal Cohen-Macaulay $R/\p$-modules for regular local rings $R$ and give information about the structure of these hypothetical modules in Theorem~\ref{thm:Lp_to_Kp}.
 
% ==============================================================================
\section{Preliminaries}
\label{sec:prelim}

% ------------------------------------------------------------------------------
\subsection{Basic notations}
\label{subsec:basics}

For a ring $R$, we denote by $\rmod R$ the category of all (unitary right $R$-) modules, and by $\rfmod R$ its subcategory consisting of all finitely generated modules. Similarly, we define $\lmod R$ and $\lfmod R$ using left $R$-modules.

For a module $M$, $\Add M$ denotes the class of all direct summands of (possibly infinite) direct sums of copies of the module $M$. Similarly, $\Prod M$ denotes the class of all direct summands of direct products of copies of $M$. Further, we denote by $\Syz{}M$ a syzygy of $M$ and by $\Cosyz{}M$ a minimal cosyzygy of $M$. That is, $\Cosyz{}M = E(M)/M$, where $E(M)$ is an injective envelope of $M$. As usual, we define also higher cosyzygies: Given a module $M$,
\[ 0 \la M \la E_0(M) \la E_1(M) \la E_2(M) \la \cdots \]
will stand for the minimal injective coresolution and the image of $E_{i-1}(M) \to E_i(M)$ for $i \geq 1$ will be denoted by
$\Cosyz iM$. That is, $\Cosyz{}M = \Cosyz 1M$. We refrain from the usual notation $\Omega^{-i}(M)$ for the $i$-th cosyzygy for we require the following convention:
\[ \Cosyz 0M = M \qquad \textrm{and} \qquad \Cosyz iM = 0 \textrm{ for all } i<0. \]
Thus, we need to distinguish between syzygies and negative cosyzygies.

Given a class $\clS$ of right modules, we denote:
\begin{align*}
\clS^\perp &= \{ M \in \rmod R \mid \Ext iRSM = 0 \textrm{ for all } S \in \clS \textrm{ and } i\geq 1 \},    \\
{^\perp \clS} &= \{ M \in \rmod R \mid \Ext iRMS = 0 \textrm{ for all } S \in \clS \textrm{ and } i\geq 1 \}.
\end{align*}
If $\clS = \{S\}$ is a singleton, we shorten the notation to $S^\perp$ and ${^\perp S}$. A similar notation is used for the classes of modules orthogonal \wrt the Tor functor:
\[ \clS^\intercal = \{ M \in \lmod R \mid \Tor iRSM = 0 \textrm{ for all } S \in \clS \textrm{ and } i\geq 1 \}. \]

Given a class $\clS \subseteq \rmod R$ and a module $M$, a well-ordered chain of submodules
\[ 0 = M_0 \subseteq M_1 \subseteq M_2 \subseteq \cdots \subseteq M_\alpha \subseteq M_{\alpha+1} \subseteq \cdots M_\sigma = M, \]
is called an \emph{$\clS$-filtration} of $M$ if $M_\beta = \bigcup_{\alpha<\beta} M_\alpha$ for every limit ordinal $\beta\le\sigma$ and up to isomorphism $M_{\alpha+1}/M_\alpha \in \clS$ for each $\alpha<\sigma$. A module is called \emph{$\clS$-filtered} if it has at least one $\clS$-filtration.

Further, given an abelian category $\A$ (in our case typically $\A = \rmod R$, or $\A = \rfmod R$ if $R$ is right noetherian), a pair of full subcategories $(\T,\F)$ is called a \emph{torsion pair} if
\begin{ii}
  \item $\Hom{\A}TF = 0$ for each $T \in \T$ and $F \in \F$;
  \item For each $M \in \A$ there is an exact sequence $0 \to T \to M \to F \to 0$ with $T \in \T$ and $F \in \F$.
\end{ii}
In such a case, $\T$ is called a \emph{torsion class} and $\F$ a \emph{torsion-free class}. A standard and easy but useful observation is the following:

\begin{lemma} \label{lem:comparing}
Let $(\T,\F)$ and $(\T',\F')$ be torsion pairs in an abelian category. If $\T' \subseteq \T$ and $\F' \subseteq \F$, then $\T = \T'$ and $\F = \F'$.
\end{lemma}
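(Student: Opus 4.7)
The plan is to use the decomposition from the torsion pair $(\T',\F')$ to absorb an arbitrary object of $\T$ (resp.\ $\F$) into $\T'$ (resp.\ $\F'$), exploiting the Hom-vanishing axiom of the \emph{larger} pair $(\T,\F)$ to kill the wrong-side component.

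First I would prove $\T = \T'$. Given $T \in \T$, apply axiom (ii) for $(\T',\F')$ to obtain a short exact sequence
\[
0 \la T' \la T \la F' \la 0
\]
with $T' \in \T' \subseteq \T$ and $F' \in \F' \subseteq \F$. Since the quotient map $T \to F'$ goes from an object of $\T$ to an object of $\F$, axiom (i) for $(\T,\F)$ forces it to be zero, hence $F' = 0$ and $T \cong T' \in \T'$.

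Next I would prove $\F = \F'$ by the dual argument. Given $F \in \F$, apply (ii) for $(\T',\F')$ to get
\[
0 \la T' \la F \la F'' \la 0
\]
with $T' \in \T' \subseteq \T$ and $F'' \in \F' \subseteq \F$. The monomorphism $T' \hookrightarrow F$ lives in $\Hom{}{T'}{F}$, which vanishes by axiom (i) for $(\T,\F)$; thus $T' = 0$ and $F \cong F'' \in \F'$.

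There is essentially no obstacle here: the only subtlety is realising that one should decompose via the smaller pair and apply the vanishing axiom of the larger pair, rather than the other way round. No additional hypotheses on $\A$ (such as existence of enough injectives or projectives) are needed, and the argument works in any abelian category.
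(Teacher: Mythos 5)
Your proof is correct, and it is the standard argument: decompose an object of the larger class via the smaller torsion pair, then use the Hom-vanishing of the larger pair to show the wrong-side term of the decomposition vanishes. The paper itself states this lemma without proof, labelling it a ``standard and easy but useful observation,'' so there is nothing to compare against; your argument supplies exactly the expected justification.
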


If $\A = \rmod R$, it is well-known that $\F$ is the torsion-free class of a torsion pair \iff $\F$ is closed under submodules, extensions and direct products. Similarly, torsion classes are precisely those closed under factor modules, extensions and direct sums. For $\A = \rfmod R$ and $R$ right noetherian, any torsion-free class $\F$ is closed under submodules and extensions (so also under finite products), but some caution is due here as these closure properties do not characterize torsion-free classes. Consider for instance $R = \Z$ and the class $\F$ of all finite abelian groups.

Let us conclude this discussion with two more properties which torsion pairs in $\rmod R$ can possess.

\begin{definition} \label{def:tor_pair}
Let $(\T,\F)$ be a torsion pair in $\rmod R$. Then $(\T,\F)$ is \emph{hereditary} if $\T$ is closed under submodules, or equivalently by~\cite[Chapter VI, Proposition 3.2]{St}, if $\F$ is closed under taking injective envelopes. The torsion pair is called \emph{faithful} if $R \in \F$.
\end{definition}

% ------------------------------------------------------------------------------
\subsection{Commutative algebra essentials}
\label{subsec:comm_alg}

For a commutative noetherian ring $R$, we denote by $\Spec R$ the spectrum of $R$. The spectrum is well-known to carry the Zariski topology, where the closed sets are those of the form
\[ V(I) = \{ \p\in\Spec R \mid \p \supseteq I \}, \]
for some subset $I \subseteq R$. If $I = \{f\}$ is a singleton, we again write just $V(f)$.

Given $M \in \rmod R$, $\Ass M$ denotes the set of all associated primes of $M$, and $\Supp M$ the support of $M$. For $\mathcal C \subseteq \rmod R$, we let
\[
\Ass {\mathcal C} = \bigcup_{M \in \mathcal C} \Ass M \quad
\textrm{ and }
\quad \Supp {\mathcal C} = \bigcup_{M \in \mathcal C} \Supp M.
\]
For $\p \in \Spec R$, we denote by $R_\p$ the localization of $R$ at $\p$, and by $k(\p)  = R_\p/\p_\p$ the residue field.

If $M \in \rmod R$, $\p \in \Spec R$ and $i \ge 0$, the \emph{Bass invariant} $\mu_i(\p,M)$ is defined as the number of direct summands isomorphic to $E(R/\p)$ in a decomposition of $E_i(M)$ into indecomposable direct summands (see e.g.~\cite[\S 9.2]{EJ} or~\cite[\S 3.2]{BrH}). That is,
\[ E_i(M) = \bigoplus_{\p \in \Spec R} E(R/\p)^{(\mu_i(\p,M))}. \]

The relation of associated primes to these invariants is captured by the following lemma due to Bass:

\begin{lemma} \label{lem:Bass}
Let $M$ be an $R$-module, $\p \in \Spec R$ and $i \ge 0$. Then
\[ \mu_i(\p,M) = \dim_{k(\p)} \Ext{i}{R_\p}{k(\p)}{M_\p}, \]
and we have the following equivalences:
\[
\p \in \Ass{\Cosyz iM}
\quad \Longleftrightarrow \quad
\p \in \Ass{E_i(M)}
\quad \Longleftrightarrow \quad
\mu_i(\p,M) \ne 0.
\]
\end{lemma}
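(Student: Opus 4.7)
My plan is to reduce to the case when $R$ is local at $\p$ and then to read off both the dimension formula and the associated primes from the decomposition of the minimal injective coresolution into indecomposables. First I would reduce to the local case: localization at $\p$ sends the minimal injective coresolution $E_\bullet(M)$ over $R$ to the minimal injective coresolution of $M_\p$ over $R_\p$. Indeed, summands $E(R/\q)$ of $E_i(M)$ with $\q\not\subseteq\p$ vanish upon localization (since $E(R/\q)$ is $\q$-torsion and $\q$ meets the complement of $\p$), while those with $\q\subseteq\p$ localize to $E_{R_\p}(R_\p/\q R_\p)$; moreover, localization preserves essential extensions and therefore minimality. Combined with the standard isomorphism $\Ext{i}{R}{R/\p}{M}_\p \cong \Ext{i}{R_\p}{k(\p)}{M_\p}$, this reduces the lemma to the case $R=R_\p$ with maximal ideal $\m=\p R_\p$ and residue field $k=k(\p)$.

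In the local case I apply $\Hom{R}{k}{-}$ to the decomposition $E_i(M) = \bigoplus_\q E(R/\q)^{(\mu_i(\q,M))}$. Since $\Ass{E(R/\q)}=\{\q\}$, any nonzero map $k\hookrightarrow E(R/\q)$ would force $\m\in\Ass{E(R/\q)}$, hence $\q=\m$; consequently $\Hom{R}{k}{E(R/\q)}=0$ for $\q\ne\m$, while $\Hom{R}{k}{E(R/\m)}\cong k$, giving $\Hom{R}{k}{E_i(M)}\cong k^{(\mu_i(\m,M))}$. Minimality of the resolution then ensures that the induced differentials in the complex $\Hom{R}{k}{E_\bullet(M)}$ vanish: every nonzero map $k\to E_i(M)$ factors through the socle of $E_i(M)$, which lies inside the essential submodule $\Cosyz{i}{M}$, and the map $E_i(M)\to E_{i+1}(M)$ annihilates $\Cosyz{i}{M}$. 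Therefore $\Ext{i}{R}{k}{M}\cong \Hom{R}{k}{E_i(M)}\cong k^{(\mu_i(\m,M))}$, which is the required dimension formula.

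For the associated prime statement, the decomposition of $E_i(M)$ together with $\Ass{E(R/\q)}=\{\q\}$ immediately yields $\Ass{E_i(M)} = \{\q\in\Spec R \mid \mu_i(\q,M)\neq 0\}$, giving the equivalence between $\p\in\Ass{E_i(M)}$ and $\mu_i(\p,M)\ne 0$. The remaining equivalence $\p\in\Ass{\Cosyz{i}{M}} \iff \p\in\Ass{E_i(M)}$ is then a consequence of the fact that $\Cosyz{i}{M}\hookrightarrow E_i(M)$ is an essential extension (in fact the injective envelope), and essential extensions share the same associated primes.

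The main technical delicacy lies in the reduction step, namely verifying that localization genuinely preserves the minimality of the injective resolution, together with the vanishing of the $k$-level differentials in the local case; both ultimately rest on the observation that the socles of the $E_i(M)$ are already contained in the essential submodule $\Cosyz{i}{M}$, which is the precise content of minimality.
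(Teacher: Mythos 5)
Your proof is correct. The paper does not give an argument for this lemma but simply cites \cite[Proposition 3.2.9, Lemma 3.2.7]{BrH} and \cite[Theorem 9.2.4]{EJ}; the argument you supply — reduce to the local case using that localization at $\p$ kills the summands $E(R/\q)$ with $\q \not\subseteq \p$ and preserves minimality, then compute $\Hom{R_\p}{k(\p)}{E_\bullet(M_\p)}$ via the socle and observe that minimality forces the differentials of this complex to vanish — is precisely the standard proof contained in those references, so the content is the same.

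One small point worth being explicit about in the reduction step: to conclude $E(R/\q)_\p = 0$ for $\q \not\subseteq \p$ you need that every nonzero element of $E(R/\q)$ has $\q$-primary annihilator, which follows from $\Ass E(R/\q) = \{\q\}$; picking $r \in \q \setminus \p$ then gives $r^n x = 0$ and hence $x/1 = 0$ in the localization. You gesture at this ("$E(R/\q)$ is $\q$-torsion"), and it is fine, but spelling out the primary-annihilator fact would make the step airtight.
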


\begin{proof}
For the equality above we refer for instance to~\cite[Proposition 3.2.9]{BrH} or~\cite[Theorem 9.2.4]{EJ}. The first equivalence below is proved in~\cite[Lemma 3.2.7]{BrH}. For the second, we use the equality $\mu_i(\p,M) = \dim_{k(\p)} \Hom{R_\p}{k(\p)}{E_i(M_\p)}$ from the proof of \cite[Proposition 3.2.9]{BrH} or~\cite[Theorem 9.2.4]{EJ}.
\end{proof}

As a consequence, we can extend classic relations between associated prime ideals of the terms of a short exact sequence to their cosyzygies: 

\begin{lemma} \label{lem:ses}
Let $0 \to K \to L \to M \to 0$ be a short exact sequence of $R$-modules and $i \in \Z$. Then the following hold:
\begin{ii}
 \item $\Ass{\Cosyz iK} \subseteq \Ass{\Cosyz{i-1}M} \cup \Ass{\Cosyz iL}$.
 \item $\Ass{\Cosyz iL} \;\subseteq \Ass{\Cosyz iK} \cup \Ass{\Cosyz iM}$.
 \item $\Ass{\Cosyz iM} \subseteq \Ass{\Cosyz iL} \cup \Ass{\Cosyz{i+1}K}$.
\end{ii}
\end{lemma}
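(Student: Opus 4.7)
The plan is to reduce everything to a single long exact sequence of $\Ext$ groups in the localization $R_\p$, using the characterization of associated primes of cosyzygies provided by Lemma~\ref{lem:Bass}.

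Concretely, by Lemma~\ref{lem:Bass}, for any module $N$ and any $j \ge 0$ the following are equivalent:
\[ \p \in \Ass{\Cosyz jN} \quad \Longleftrightarrow \quad \mu_j(\p,N) \ne 0 \quad \Longleftrightarrow \quad \Ext j{R_\p}{k(\p)}{N_\p} \ne 0, \]
while for $j<0$ both sides are trivially zero because of the convention $\Cosyz jN=0$ and the vanishing of negative $\Ext$. Therefore, to establish each inclusion in (i)--(iii) it suffices to fix a prime $\p$ and argue about nonvanishing of the corresponding $\Ext^j_{R_\p}(k(\p),-)$ on the localized sequence.

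Since localization is exact, the short exact sequence $0\to K\to L\to M\to 0$ gives $0\to K_\p\to L_\p\to M_\p\to 0$ in $\rmod{R_\p}$. Applying $\Hom{R_\p}{k(\p)}{-}$ and taking the long exact sequence of Ext produces
\[ \cdots \to \Ext{i-1}{R_\p}{k(\p)}{M_\p} \to \Ext i{R_\p}{k(\p)}{K_\p} \to \Ext i{R_\p}{k(\p)}{L_\p} \to \Ext i{R_\p}{k(\p)}{M_\p} \to \Ext{i+1}{R_\p}{k(\p)}{K_\p} \to \cdots \]
Each of the three claimed inclusions is now read off from a consecutive three-term piece of this sequence: if the middle term is nonzero, at least one of its two neighbors must be nonzero. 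Specifically, nonvanishing of $\Ext i{R_\p}{k(\p)}{K_\p}$ forces either $\Ext{i-1}{R_\p}{k(\p)}{M_\p}\ne 0$ or $\Ext i{R_\p}{k(\p)}{L_\p}\ne 0$, which is (i); nonvanishing of the middle Ext over $L_\p$ gives (ii); and nonvanishing over $M_\p$ gives (iii).

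There is no real obstacle here; the only point worth a line of comment is that the argument is uniform in $i \in \Z$ thanks to the convention $\Cosyz iN=0$ for $i<0$, which matches the vanishing of $\Ext i$ in negative degrees and makes the boundary cases $i\le 0$ (where one of the claimed terms is empty) automatic.
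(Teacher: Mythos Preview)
Your proof is correct and follows essentially the same approach as the paper: localize the short exact sequence at $\p$, apply $\Hom{R_\p}{k(\p)}{-}$, and read off the inclusions from the long exact Ext sequence via Lemma~\ref{lem:Bass}. Your version is just more explicit about the three-term argument and the boundary cases $i\le 0$.
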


\begin{proof}
Given any $\p \in \Spec R$, we consider the long exact sequence of Hom and Ext groups, which we obtain by applying the functor $\Hom{R_\p}{k(\p)}-$ on the localized short exact sequence
\[ 0 \la K_\p \la L_\p \la M_\p \la 0. \]
The lemma is then an easy consequence of Lemma~\ref{lem:Bass}.
\end{proof}

In particular, we obtain information on associated primes of syzygy modules.

\begin{corollary} \label{cor:syzygies}
Let $M$ be an $R$-module, $\ell \ge 1$ and $K$ be an $\ell$-th syzygy of $M$. Then for any $i \in \Z$ we have:
\[ \Ass{\Cosyz iK} \subseteq \Ass{\Cosyz{i-\ell}M} \cup \bigcup_{j=0}^{\ell-1} \Ass{\Cosyz{i-j}R}, \]
and 
\[ \Ass{M} \subseteq  \bigcup_{j=0}^{\ell-1} \Ass{\Cosyz{j}R}\cup \Ass{\Cosyz{\ell}K}.  \]
\end{corollary}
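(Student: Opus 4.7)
The plan is to obtain both inclusions by iterating Lemma~\ref{lem:ses}: part~(i) is the engine for the first inclusion, and part~(iii) for the second. First I would fix an exact sequence $0 \to K \to P_{\ell-1} \to \cdots \to P_0 \to M \to 0$ exhibiting $K$ as an $\ell$-th syzygy of $M$, with each $P_j$ projective, and denote by $K_j$ the intermediate kernels, so that $K_0 = M$, $K_\ell = K$, and there are short exact sequences $0 \to K_{j+1} \to P_j \to K_j \to 0$ for $j = 0, \ldots, \ell - 1$.

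The key auxiliary ingredient is the observation that for every projective $R$-module $P$ and every $i \geq 0$ one has $\Ass{\Cosyz i P} \subseteq \Ass{\Cosyz i R}$. This uses the noetherianity of $R$: injective envelopes commute with arbitrary direct sums, so by induction on $i$ one obtains $\Cosyz{i}{R^{(I)}} \cong (\Cosyz i R)^{(I)}$. Since $P$ is a direct summand of some $R^{(I)}$, $\Cosyz i P$ is then a summand of $(\Cosyz i R)^{(I)}$, whose set of associated primes coincides with $\Ass{\Cosyz i R}$.

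For the first inclusion I would apply Lemma~\ref{lem:ses}(i) to the short exact sequences above in turn, stepping from $K_\ell = K$ at cosyzygy index $i$ down to $K_0 = M$ at index $i-\ell$, at each step absorbing the term $\Ass{\Cosyz{\cdot}{P_j}}$ into a suitable $\Ass{\Cosyz{i-j}{R}}$ via the auxiliary observation; telescoping the resulting chain yields exactly the claimed bound, while the convention $\Cosyz{i}{N} = 0$ for $i<0$ takes care of degenerate terms. Dually, for the second inclusion I would apply Lemma~\ref{lem:ses}(iii) to the same sequences, starting at $K_0 = M$ at index $0$ and stepping up to $K_\ell = K$ at index $\ell$; telescoping leaves $\Ass M$ on the left and exactly $\bigcup_{j=0}^{\ell-1} \Ass{\Cosyz j R} \cup \Ass{\Cosyz \ell K}$ on the right.

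The only real obstacle is the auxiliary observation on projectives; once it is in hand, both inclusions amount to routine bookkeeping in a telescoping argument and no separate induction on $\ell$ is needed.
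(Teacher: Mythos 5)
Your proposal is correct and matches the paper's own proof in all essentials: iterate Lemma~\ref{lem:ses} across the short exact sequences $0 \to K_{j+1} \to P_j \to K_j \to 0$ and absorb each projective term using the observation $\Ass{\Cosyz jP} \subseteq \Ass{\Cosyz jR}$. The paper cites only part (i) of Lemma~\ref{lem:ses} and leaves the second inclusion to the reader, whereas you correctly observe that the second inclusion is driven by part (iii); you also spell out the proof of the auxiliary fact about projectives, which the paper merely asserts — both are minor elaborations, not a different route.
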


\begin{remark} \label{rem:syzygy_indep}
We stress that according to our convention, $\Cosyz{i-\ell}M = 0$ for $i-\ell < 0$. Thus, the right-hand term does not depend on $M$ for $i < \ell$.
\end{remark}

\begin{proof}
This is easily obtained from Lemma~\ref{lem:ses}(i) by induction on $\ell$. We also use that $\Ass{\Cosyz jP} \subseteq \Ass{\Cosyz jR}$ for any $j \in \Z$ and any projective module $P$.
\end{proof}

We finish by recalling a well-known property of the residue field considered as $R$-module (see e.g.\ \cite[Theorem 18.4]{MATSUMURA}), and its consequences:

\begin{lemma}\label{lem:kp}
Let $\p \in \Spec R$. Then $E(R/\p) \cong E_{R_{\p}}(k(\p))$ as $R$-modules. In particular:
\begin{enumerate}
 \item $E(R/\p)$ is $\{ k(\p) \}$-filtered and $\Ass{k(\p)} = \Ass{E(R/\p)} = \{\p\}$;
 \item $\Cosyz{i}{k(\p)}$ is $\{ k(\p) \}$-filtered and $\Ass{\Cosyz{i}{k(\p)}} \subseteq \{\p\}$ for each $i \ge 1$.
\end{enumerate}
\end{lemma}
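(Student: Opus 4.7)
The isomorphism $E(R/\p)\cong E_{R_\p}(k(\p))$ is \cite[Thm.~18.4]{MATSUMURA}, so I would just invoke it. Write $E = E(R/\p) = E_{R_\p}(k(\p))$. The lemma will hinge on showing that $E$ is $\p_\p$\emph{-torsion} as an $R_\p$-module, i.e.\ every element is killed by some power of $\p_\p$. Granted this, both (1) and (2) reduce to a single filtration recipe: for any $\p_\p$-torsion $R_\p$-module $M$, the chain $T_n := (0:_M\p_\p^n)$ exhausts $M$, and each successive quotient $T_n/T_{n-1}$, being annihilated by $\p_\p$, is a $k(\p)$-vector space which one can refine via a well-ordered basis to produce a $\{k(\p)\}$-filtration of $M$.

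To verify that $E$ is $\p_\p$-torsion, I would fix $x \in E$. Essentiality of $k(\p) \hookrightarrow E$ over $R_\p$ forces $k(\p) \subseteq R_\p x$, and since essential extensions preserve associated primes we have $\mathrm{Ass}_{R_\p}(E) = \mathrm{Ass}_{R_\p}(k(\p)) = \{\p_\p\}$. The finitely generated $R_\p$-module $R_\p x$ thus has $\mathrm{Ass}_{R_\p}(R_\p x) = \{\p_\p\}$, which forces $\sqrt{\mathrm{Ann}_{R_\p}(R_\p x)} = \p_\p$, and by noetherianity $\p_\p^n x = 0$ for some $n$. The associated-prime assertion in (1) then reads off: the inclusion $R/\p \hookrightarrow E$ gives $\p \in \Ass{E}$, while the $\{k(\p)\}$-filtration combined with the easy fact $\Ass{k(\p)} = \{\p\}$ (a nonzero element of the field $k(\p)$ is killed exactly by $\p$) gives the reverse inclusion.

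For (2) I would perform a single induction on $i \ge 0$, carrying as inductive hypothesis that $\Cosyz{i}{k(\p)}$ is $\p_\p$-torsion with $\Ass{\Cosyz{i}{k(\p)}} \subseteq \{\p\}$; the case $i = 0$ is immediate. For the inductive step, Lemma~\ref{lem:Bass} together with the hypothesis makes $E_i(k(\p)) = E(\Cosyz{i}{k(\p)})$ a coproduct of copies of $E(R/\p)$, hence $\p_\p$-torsion by (1). The quotient $\Cosyz{i+1}{k(\p)} = E_i(k(\p))/\Cosyz{i}{k(\p)}$ then inherits both the $\p_\p$-torsion property and the $R_\p$-module structure; these two together force every associated prime $\q$ of $\Cosyz{i+1}{k(\p)}$ to contain $\p$ (as $\p^n$ kills a representing element of $\q$) and to lie in $\p$ (as elements of $R \setminus \p$ act invertibly and so cannot belong to $\q$), yielding $\q = \p$. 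The $\{k(\p)\}$-filtrations of $\Cosyz{i}{k(\p)}$ for $i \ge 1$ then drop out of the recipe from the first paragraph.

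The real obstacle is the verification that $E$ is $\p_\p$-torsion: it is classical but requires the essentiality-plus-associated-primes detour above rather than being transparent from the definitions. Once that step is in place, the remaining filtration construction and the induction in (2) are routine bookkeeping.
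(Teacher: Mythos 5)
Your proof is correct. The paper itself supplies no argument for this lemma: it is stated as a recollection of well-known facts with a bare citation to \cite[Theorem 18.4]{MATSUMURA}. Your route --- establishing that $E = E(R/\p)$ is $\p_\p$-torsion via essentiality and associated primes, and then exhausting any $\p_\p$-torsion $R_\p$-module by the ascending chain $T_n = (0 :_M \p_\p^n)$ with $k(\p)$-vector-space subquotients --- is precisely the content of the cited theorem and is the standard textbook argument. The induction on $i$ in part (2), using Lemma~\ref{lem:Bass} to recognize $E_i(k(\p))$ as a direct sum of copies of $E(R/\p)$, is exactly how one would close the loop. Two minor remarks: (a) the aside that $k(\p) \subseteq R_\p x$ is not actually used once you invoke the preservation of associated primes under essential extensions, so it could be dropped; (b) your neat device in part (2) for pinning down $\Ass$ --- the $\p_\p$-torsion property gives $\p \subseteq \q$ while the $R_\p$-module structure gives $\q \subseteq \p$ --- would also serve in part (1) in place of the appeal to transfinite $\{k(\p)\}$-filtrations preserving $\Ass$, which, while true, requires a small transfinite induction that you left implicit.
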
 

% ------------------------------------------------------------------------------
\subsection{Tilting and cotilting modules and classes}
\label{subsec:tilting}

Next, we recall the notion of an (infinitely generated) tilting module from \cite{CT,AC}:

\begin{definition} \label{def:tilt}
Let $R$ be a ring. A module $T$ is \emph{tilting} provided that
\begin{itemize}
\item[\rm{(T1)}] $T$ has finite projective dimension.
\item[\rm{(T2)}] $\Ext iRT{T^{(\kappa)}} = 0$ for all $i \ge 1$ and all cardinals $\kappa$.
\item[\rm{(T3)}] There is an exact sequence $0 \to R \to T_0  \to T_ 1 \to \dots \to T_r \to 0$ where $T_0, T_1, \dots, T_r \in \Add T$.
\end{itemize}

\noindent The class $T^\perp = \{ M \in \rmod R \mid \Ext iRTM = 0 \textrm{ for each } i \ge 1 \}$ is called the \emph{tilting class} induced by $T$. Given an integer $n \ge 0$, a tilting module as well as its associated class are called \emph{$n$-tilting} provided the projective dimension of $T$ is at most $n$. We recall that in such a case we can chose the sequence in (T3) so that $r \le n$ (see~\cite[Proposition 3.5]{BAZ}).

\noindent If $T$ and $T^\prime$ are tilting modules, then $T$ is said to be \emph{equivalent} to $T^\prime$ provided that $T^{\perp} = (T^\prime)^{\perp}$, or equivalently by~\cite[Lemma 5.1.12]{GT}, $T^\prime \in \Add T$.
\end{definition}

The structure of tilting modules over commutative noetherian rings is rather different from the classic case of artin algebras. The key point is the absence of non-trivial finitely generated tilting modules:  

\begin{lemma}\label{lem:nofin} \cite{CM,PT}
Let $R$ be a commutative noetherian ring and $T$ be a finitely generated module. Then $T$ is tilting, if and only if $T$ is projective.
\end{lemma}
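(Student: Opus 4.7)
My plan is as follows. The ``if'' direction is immediate: if $T$ is finitely generated projective and $R \in \Add T$ (which is implicitly required for (T3) to be witnessable by a projective module), then (T1) and (T2) are trivial and (T3) is witnessed by the resolution $0 \to R \to R \to 0$. I focus on the ``only if'' direction. First I would reduce to the local case. The three tilting conditions all pass to $R_\p$ for every $\p \in \Spec R$: since $R$ is noetherian, the modules $T_i$ in (T3) can be chosen finitely generated (as summands of finite powers of $T$), so both the Ext-vanishing in (T2) and the finite coresolution in (T3) localize cleanly, and $T_\p$ becomes a finitely generated tilting $R_\p$-module. As projectivity of a finitely generated module is detected on stalks, it suffices to prove the statement when $(R,\m)$ is a commutative noetherian local ring.

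Assume henceforth that $R$ is local. The strategy is to first establish $R \in \Add T$, then to deduce that $T$ is free. Once $R \in \Add T$ is known, Krull--Schmidt (applied to the indecomposable module $R$) yields that $R$ is a direct summand of $T$, so I may write $T \cong R \oplus T'$ with $T'$ finitely generated. Applying (T2) to the summand $R$ of $T$ forces $\Ext{i}{R}{T'}{R} = 0$ for every $i \ge 1$. If one had $\pd{R}{T'} = m \ge 1$, a minimal free resolution $F_\bullet \twoheadrightarrow T'$ (whose differentials take values in $\m F_\bullet$ by minimality) would dualize into a complex where every coboundary lies in $\m\cdot\Hom{R}{F_i}{R}$; the top cohomology $\Ext{m}{R}{T'}{R}$ would then surject onto $(R/\m)^{\mathrm{rk}\,F_m} \neq 0$, contradicting the vanishing above. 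Hence $\pd{R}{T'} = 0$, so $T'$ is free and $T = R \oplus T'$ is projective.

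The hard part is therefore showing $R \in \Add T$ in the local setting. My approach is via the classical (Miyashita-type) tilting theory: a finitely generated module satisfying (T1)--(T3) is a classical tilting object, and the derived functors $\mathrm{RHom}_R(T,-)$ and $-\otimes^{\mathrm L}_{\End T} T$ induce an equivalence between $\D(R)$ and $\D(\End T)$. Commutativity of $R$, together with Rickard's derived invariance of the center, forces this equivalence to descend to a Morita equivalence concentrated in a single degree; consequently $T$ is a progenerator of $\rmod R$, so $R \in \Add T$. Making this last descent precise --- that a derived equivalence with a commutative noetherian ring must be Morita --- is the technical heart of the argument, and is precisely the content of the references \cite{CM, PT}.
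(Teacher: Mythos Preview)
The paper does not prove this lemma; it simply cites \cite{CM,PT}, so there is no in-paper argument to compare against. Your outline is broadly sensible, and you are right to note that the ``if'' direction as literally stated requires $T$ to be a generator.

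However, your Step~2 has a real gap, and the whole strategy is heavier than necessary. Rickard's invariance of the center gives only $R \cong Z(\End_R T)$, which does not by itself force the derived equivalence to be Morita; you then defer precisely this point back to the references the lemma already cites, which is circular as a self-contained argument. In fact neither (T3) nor any derived-category machinery is needed for the ``only if'' direction. Localizing at~$\p$, one only uses (T1) and the self-orthogonality $\Ext{i}{R_\p}{T_\p}{T_\p}=0$ for $i\ge 1$, both of which follow from (T1) and (T2) since $T$ is finitely generated over a noetherian ring. Take a minimal free resolution $0 \to F_n \to \cdots \to F_0 \to T_\p \to 0$ with $n = \pd{R_\p}{T_\p}$. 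If $n \ge 1$ then $T_\p \ne \m_\p T_\p$ by Nakayama, so one may choose a map $F_n \to T_\p$ sending some basis vector into $T_\p \setminus \m_\p T_\p$. Every map in the image of $\Hom{R_\p}{F_{n-1}}{T_\p} \to \Hom{R_\p}{F_n}{T_\p}$ factors through $\m_\p T_\p$ by minimality of the differential, so our chosen map represents a nonzero class in $\Ext{n}{R_\p}{T_\p}{T_\p}$, contradicting (T2). Hence $T_\p$ is free for every $\p$ and $T$ is projective. This is essentially your own Step~3 trick applied with target $T_\p$ rather than $R$, and it renders your Steps~2 and~3 unnecessary.
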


Even though the tilting module $T$ is infinitely generated, the tilting class $T^\perp$ is always determined by a set $\mathcal S$ of finitely generated modules of bounded projective dimension. This was proved in~\cite{BaSt}, based on the corresponding result~\cite{BH} for $1$-tilting modules. We will call a subclass $\clS$ of $\rfmod R$ \emph{resolving} in case $\mathcal S$ is closed under extensions, direct summands, kernels of epimorphisms, and $R \in \mathcal S$. If $\clS$ consists of modules of projective dimension $\le 1$, the requirement of $\clS$ being closed under kernels of epimorphisms is redundant by~\cite[Lemma 5.2.22]{GT}. Using results from \cite{AHT,BH,BaSt}, we learn that resolving subclasses of $\rfmod R$ parametrize tilting classes (and hence also the tilting modules up to equivalence):

\begin{lemma}\label{lem:resolv} \cite[5.2.23]{GT}
Let $R$ be a right noetherian ring and $n \ge 0$. Then there is a bijective correspondence between
\begin{ii}
  \item $n$-tilting classes $\T$ in $\rmod R$, and
  \item resolving subclasses $\mathcal S$ of $\rfmod R$ consisting of modules of projective dimension $\leq n$.
\end{ii}
The correspondence is given by the assignments $\T \mapsto {^\perp \T} \cap \rfmod R$ and $\clS \mapsto \clS ^\perp$.
\end{lemma}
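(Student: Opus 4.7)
The plan is to invoke the structural tilting theorems of \cite{AHT, BH, BaSt} at the right moments rather than build the correspondence from scratch: the proof reduces to checking well-definedness of both maps and then verifying that the two compositions are the identity. Concretely, I would use the finite-type theorem of Bazzoni--\v{S}\v{t}ov\'\i\v{c}ek \cite{BaSt} (refining \cite{BH} for $n=1$) to control the forward direction, and the Eklof--Trlifaj / \cite{AHT} construction of tilting modules from complete cotorsion pairs for the reverse direction. Well-definedness of $\T = T^\perp \mapsto {^\perp\T} \cap \rfmod R$ is then routine: the long exact sequence of Ext shows closure under extensions, summands, and kernels of epimorphisms, and $R$ lies in the class; the noetherian hypothesis keeps such kernels finitely generated, and the bound pd $\leq n$ on elements of the class is a standard property of the left-hand class of an $n$-tilting cotorsion pair, provable by dimension-shifting along the coresolution (T3). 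Well-definedness of $\clS \mapsto \clS^\perp$ amounts to the observation that the cotorsion pair generated by a resolving $\clS$ of pd $\leq n$ is complete (Eklof--Trlifaj), and that iterating special $\clS^\perp$-preenvelopes of $R$ in the manner of \cite{AHT} yields an $n$-tilting module $T$ with $T^\perp = \clS^\perp$, the length $n$ bound inheriting from the pd bound on $\clS$.

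For the composition $\T \mapsto {^\perp\T} \cap \rfmod R \mapsto ({^\perp\T} \cap \rfmod R)^\perp$, the key input is \cite{BaSt}: every $n$-tilting class $\T$ has the form $\clS^\perp$ for some $\clS \subseteq \rfmod R$ with pd $\leq n$. Since any such $\clS$ sits inside ${^\perp\T} \cap \rfmod R$, applying $(-)^\perp$ reverses the inclusion to give $({^\perp\T} \cap \rfmod R)^\perp \subseteq \clS^\perp = \T$; the opposite inclusion is tautological.

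The reverse composition $\clS \mapsto \clS^\perp \mapsto {^\perp(\clS^\perp)} \cap \rfmod R$ is the delicate step, and where the main obstacle lies. One inclusion is clear. For the other, given $M \in {^\perp(\clS^\perp)} \cap \rfmod R$, completeness of the cotorsion pair produces a short exact sequence $0 \to Y \to X \to M \to 0$ with $X$ being $\clS$-filtered and $Y \in \clS^\perp$; since $\Ext 1RMY = 0$, this splits, exhibiting $M$ as a direct summand of $X$. The main obstacle is to force $M$ itself into $\clS$: I would invoke Hill's lemma to present $M$ as a summand of a countably $\clS$-filtered submodule of $X$, and then use the finite generation of $M$, right noetherianity, and the resolving closure properties of $\clS$ (extensions, summands, kernels of epimorphisms, together with the pd bound) to compress the filtration into a single element of $\clS$. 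This closure-under-summands phenomenon for resolving subcategories of bounded projective dimension is the technical heart of the statement and is essentially the content of \cite[\S 5.2]{GT}.
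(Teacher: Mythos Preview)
The paper does not prove this lemma at all; it is quoted verbatim as \cite[5.2.23]{GT}, with the surrounding sentence merely pointing to \cite{AHT,BH,BaSt} as the sources of the underlying results. So there is no ``paper's own proof'' to compare against, only the cited reference.

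That said, your sketch is substantively correct and matches the strategy one finds in \cite[\S5.2]{GT}: finite type from \cite{BaSt} for one composition, and completeness of the cotorsion pair plus Hill's Lemma for the other. One small sharpening: in the reverse composition, once $M \in {}^\perp(\clS^\perp) \cap \rfmod R$ is exhibited as a summand of an $\clS$-filtered module, Hill's Lemma (together with finite generation of $M$ and right noetherianity of $R$) gives $M$ as a summand of a \emph{finitely} $\clS$-filtered module, not merely a countably filtered one; since $\clS$ is closed under extensions and direct summands, this immediately puts $M$ in $\clS$ without any further ``compression'' argument. Your phrasing ``countably $\clS$-filtered'' is not wrong, but it obscures that the finiteness of the filtration is precisely what makes the last step trivial.
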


The dual notions of a cotilting module and a cotilting class are defined as follows: 

\begin{definition}\label{def:cotilt}
Let $R$ be a ring. A module $C$ is \emph{cotilting} provided that
\begin{itemize}
\item[\rm{(C1)}] $C$ has finite injective dimension.
\item[\rm{(C2)}] $\Ext iR{C^{\kappa}}C = 0$ for all $i \geq 1$ and all cardinals $\kappa$.
\item[\rm{(C3)}] There is an exact sequence $0 \to C_r \to \dots \to C_1 \to C_0 \to W \to 0$ where $W$ is an injective cogenerator of $\rmod R$ and $C_0, C_1, \dots C_r \in \Prod C$.
\end{itemize}

\noindent The class ${^\perp C} = \{ M \in \rmod R \mid \Ext iRMC = 0 \textrm{ for all } i \geq 1 \}$ is the \emph{cotilting class} induced by $C$. Again, if the injective dimension of $C$ is at most $n$, we call $C$ and ${^\perp C}$ an \emph{$n$-cotilting} module and class, respectively.

\noindent If $C$ and $C^\prime$ are cotilting modules, then $C$ is said to be \emph{equivalent} to $C^\prime$ provided that $^{\perp} C = {}^\perp C^\prime$, or equivalently by~\cite[Remark 8.1.6]{GT}, $C^\prime \in \Prod C$.
\end{definition}

If $T$ is an $n$-tilting right $R$-module, then the character module
\[ C = T^+ = \Hom {\mathbb Z}T{\mathbb Q/\mathbb Z} \]
is an $n$-cotilting left $R$-module; see~\cite[Proposition 2.3]{AHT}. By Lemma~\ref{lem:resolv}, the induced tilting class $\T = T^\perp$ equals $\clS ^\perp$ where $\clS = {^\perp \T} \cap \rfmod R$ is a resolving subclass of $\rfmod R$. The cotilting class $\C$ induced by $C$ in $\lmod R$ is then easily seen to be 
\[ \C = {}^\perp C = T^\intercal = \clS ^\intercal = \{ M \in \lmod R \mid \Tor 1RSM \hbox{ for all } S \in \clS \}. \]
We will call $\C$ the cotilting class {\em associated} to the tilting class $\mathcal T$.

It follows that that tilting modules $T$ and $T'$ are equivalent, \iff the character modules $T^+$ and $(T^\prime)^+$ are equivalent as cotilting left $R$-modules; see~\cite[Theorem 8.1.13]{GT}. Therefore, the assignment $T \mapsto T^+$ induces an injective map from equivalence classes of tilting to equivalence classes of cotilting modules. For $R$ %commutative 
noetherian, this map, as we will show, is a bijection, but for non-noetherian commutative rings the surjectivity may fail; see~\cite{B}. Let us summarize the properties we need.

\begin{lemma} \label{lem:cofinite}
Let $R$ be right noetherian ring and $n \ge 0$. Then the following holds:
\begin{ii}
\item If $\clS \subseteq \rfmod R$ is a class of finitely generated modules of projective dimension bounded by $n$, then $\clS^\perp$ is an $n$-tilting class in $\rmod R$ and $\clS^\intercal$ is the associated $n$-cotilting class in $\lmod R$.
\item An $n$-cotilting class $\C$ in $\lmod R$ is associated to a tilting class \iff there exists a class $\clS$ of finitely generated modules of projective dimension $\leq n$ \st $\C = \clS^\intercal$.
\end{ii}
\end{lemma}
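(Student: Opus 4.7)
The plan is to reduce both parts to Lemma~\ref{lem:resolv} together with the standard $\Ext$-$\Tor$ adjunction
$(\Tor i R X M)^+ \cong \Ext i R X {M^+}$,
valid for any right $R$-module $X$ and left $R$-module $M$ (and symmetrically). Since $(-)^+$ is a faithful functor on abelian groups, this isomorphism reduces the vanishing of $\Tor i R X M$ either to $M^+ \in X^\perp$ (viewing $X^\perp \subseteq \rmod R$) or to $M \in {}^\perp(X^+)$ (viewing ${}^\perp(X^+) \subseteq \lmod R$).

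For part (i) I would first form the resolving closure $\clS^{\mathrm{res}}$ of $\clS \cup \{R\}$ inside $\rfmod R$. The class of finitely generated modules of projective dimension at most $n$ is stable under extensions, direct summands and kernels of epimorphisms (use the long exact sequence of $\Ext$), so $\clS^{\mathrm{res}}$ still consists of modules of projective dimension $\leq n$. The same closure properties preserve the vanishing of $\Ext^{\geq 1}(-,M)$ in the first variable, so $\clS^\perp = (\clS^{\mathrm{res}})^\perp$. By Lemma~\ref{lem:resolv}, $\clS^\perp$ is then an $n$-tilting class, realised by some $n$-tilting module $T$ with $T^\perp = \clS^\perp$. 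The character module $C = T^+$ is an $n$-cotilting left $R$-module, and $\C = {}^\perp C$ is, by definition, the cotilting class associated to $T^\perp$. To identify $\C$ with $\clS^\intercal$, I would apply the adjunction twice: for $M \in \lmod R$, membership in ${}^\perp(T^+)$ is equivalent to $\Tor i R T M = 0$ for every $i \ge 1$, and hence to $M^+ \in T^\perp = \clS^\perp$, i.e.\ to $\Ext i R S {M^+} = 0$ for all $S \in \clS$ and $i \ge 1$; re-applying the adjunction with $S$ in place of $T$ converts this into $\Tor i R S M = 0$ for all $S \in \clS$ and $i \ge 1$, which is precisely $M \in \clS^\intercal$.

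Part (ii) is then a direct consequence of (i). The implication $(\Leftarrow)$ is exactly the second assertion of (i). For $(\Rightarrow)$, if $\C$ is the cotilting class associated to an $n$-tilting class $\T$, I would take $\clS := {}^\perp \T \cap \rfmod R$; by Lemma~\ref{lem:resolv}, $\clS$ is a resolving subclass of $\rfmod R$ consisting of modules of projective dimension $\leq n$, and $\clS^\perp = \T$. The equality $\C = \clS^\intercal$ established in (i) then finishes the argument. The proof is essentially formal: no step is genuinely hard, and the only thing that requires care is keeping track of which ambient module category (left vs.\ right) each class lives in, so that the $\Ext$-$\Tor$ adjunction is invoked with the correct variance on each occasion.
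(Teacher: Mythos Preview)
Your argument is correct. The paper's own proof is simply a string of citations to \cite{GT}: Theorem~5.2.2 for $\clS^\perp$ being tilting, Theorem~8.1.12 for $\clS^\intercal$ being cotilting, Theorem~8.1.2 for the association, and Theorem~8.1.13(a) for part~(ii). You instead route everything through Lemma~\ref{lem:resolv} (itself a citation to \cite{GT}) and the $\Ext$--$\Tor$ character duality, which lets you derive the cotilting side and the identification $\C = \clS^\intercal$ by elementary homological algebra rather than by further black-box references. The gain is that your proof is more self-contained and makes the mechanism (passage through $M \mapsto M^+$ and the faithfulness of $(-)^+$) visible; the cost is that you implicitly use the fact, stated in the paragraph preceding the lemma, that $T^+$ is cotilting, whereas the paper's citations give that independently. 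Both approaches ultimately rest on the same machinery from \cite{GT}, so the difference is one of exposition rather than substance.
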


\begin{proof}
For (i), $\clS^\perp$ is an $n$-tilting class by~\cite[Theorem 5.2.2]{GT} and $\clS^\intercal$ is $n$-cotilting by~\cite[Theorem 8.1.12]{GT}. The cotilting class $\clS^\intercal$ is associated to the tilting class $\clS^\perp$ by~\cite[Theorem 8.1.2]{GT}. Part (ii) is proved in~\cite[Theorem 8.1.13(a)]{GT}.
\end{proof}

\begin{remark} \label{rem:elem_dual}
The relation between a tilting class $\T$ and the associated cotilting class $\C$ can be interpreted using model-theoretic means in terms of the so-called elementary duality. Namely, $\T$ and $\C$ can be axiomatized in the first order language of the right (left, resp.) $R$-modules (cf.~\cite[5.2.2 and 8.1.7]{GT}) and the corresponding theories are given by mutually dual primitive positive formulas. We refer to~\cite[Section 1.3]{PREST} for more details and references on the model-theoretic background.
\end{remark}

% ==============================================================================
\section{The one-dimensional case}
\label{sec:dim_one}

From this point on, unless explicitly specified otherwise, we will assume that our base ring $R$ is commutative and noetherian. 

We will treat separately the case of $1$-tilting and $1$-cotilting modules. We have chosen such presentation for two reasons. First, the arguments for this special situation are simpler and more transparent. Second, the one-dimensional case is tightly connected to the classical notion of Gabriel topology and the abelian quotients of the category $\rmod R$. We refer to~\cite{St} for details on the latter concepts.

To start with, we recall~\cite[Lemma 6.1.2]{GT}: $T \in \rmod R$ is $1$-tilting \iff $T^\perp = \Gen (T)$ where the latter denotes the class of all modules generated by $T$. In particular, $T^\perp$ is a torsion class in $\rmod R$. Dually by~\cite[Lemma 8.2.2]{GT}, a module $C$ is $1$-cotilting \iff ${^\perp C} = \Cog (C)$ where the latter denotes the class of all modules cogenerated by $C$. Thus, $^\perp C$ is a torsion free class.

Our aim is to show that a torsion pair in $\rmod R$ is of the form $(\T,\Cog(C))$ for a $1$-cotilting module $C$ \iff it is faithful and hereditary. Moreover, we are going to classify such torsion pairs in terms of certain subsets of $\Spec R$. To this end, we introduce the following terminology:

\begin{definition}\label{def:closure}
For any subset $X \subseteq \Spec{R}$ we say that $X$ is \emph{closed under generalization} (under \emph{specialization}, resp.) if for any $\p \in X$ and any $\q \in \Spec{R}$ we have $\q \in X$ whenever $\q \subseteq \p$ ($\q \supseteq \p$, resp.). In other words, $X$ is a lower (upper, resp.) set in the poset $(\Spec R,\subseteq )$.
\end{definition}

\begin{remark}\label{rem:generalization-vs-generization}
In the algebraic geometry literature, one often uses the term \emph{closed} (or \emph{stable}) \emph{under generization} instead of generalization; see e.g.~\cite[Exercise 3.17(e), p. 94]{Har} or~\cite[Definition 2.8]{GW}.
\end{remark}

Further, we recall that Gabriel established a one-to-one correspondence between the subsets of $\Spec{R}$ closed under specialization and certain linear topologies on~$R$. On the other hand, there is a bijective correspondence between these Gabriel topologies and hereditary torsion pairs in $\rmod R$. Let us look closer at this relationship.

\begin{proposition}\label{prop:Gabriel}
Every subset $Y \subseteq \Spec{R}$ closed under specialization gives rise to a Gabriel topology on~$R$ (in the sense of~\cite[\S VI.5]{St}), given by the following set of open neighbourhoods of $0 \in R$, where all the $I$ are ideals:
\[ \G_Y= \{ I\subseteq R \mid V(I) \subseteq Y\}. \]
Then $\mathcal G_Y \cap \Spec R = Y$ and the set $Y$ also determines a hereditary torsion pair $(\mathcal \T(Y),\mathcal \F(Y))$, where:
\begin{align*}
\T(Y) &= \{ M\in \ModR \mid \Supp{M}\subseteq Y\},       \\
\F(Y) &= \{ M\in \ModR \mid \Ass{M}\cap Y=\emptyset\}.
\end{align*}
We further have the following:
\begin{ii}
\item The assignments $Y \mapsto \G_Y$ and $Y \mapsto (\T(Y),\F(Y))$ define bijective correspondences between the subsets of $\Spec R$ closed under specialization, the Gabriel topologies on $R$, and the hereditary torsion pairs in $\ModR$.
\item $\T(Y)=\{ M\in \ModR \mid \Hom{R}{M}{E(R/\q)}=0 \textrm{ for all } \q\notin Y\}$ and $\T(Y)$ contains all $E(R/\p)$ with $\p\in Y$.
\item $\F(Y)=\{ M\in \ModR \mid \Hom{R}{R/\p}{M}   =0 \textrm{ for all } \p\in Y\}$ and $\F(Y)$ contains all $E(R/\q)$ with $\q\notin Y$.
\item $(\T(Y),\F(Y))$ is a torsion theory of finite type, that is,
\[ \T(Y) = \varinjlim (\T(Y) \cap \rfmod R) \textrm{ and } \F(Y)=\varinjlim (\F(Y)\cap\rfmod R). \]
\end{ii}
\end{proposition}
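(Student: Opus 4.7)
The strategy is to verify that the assignments in~(i) produce the claimed structures and are mutually inverse, using Gabriel's classical correspondence from~\cite[Chapter VI]{St} as a black box. I would first check that $\mathcal{G}_Y$ is a Gabriel topology directly from the axioms: upward closure and closure under finite intersections reduce to monotonicity of $V(-)$ and to $V(I\cap J)=V(I)\cup V(J)$; the stability axiom is just $V((I:a))\subseteq V(I)\subseteq Y$; and the remaining Gabriel axiom follows from a routine prime argument (if $\p\supseteq J$, either $I\subseteq \p$ and so $\p\in V(I)\subseteq Y$, or $a\in I\setminus\p$ forces $(J:a)\subseteq\p$ and so $\p\in V((J:a))\subseteq Y$). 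Then $\mathcal{G}_Y\cap\Spec R = Y$ since $\p\in\mathcal{G}_Y$ means $V(\p)\subseteq Y$, which is equivalent to $\p\in Y$ by spec-closure together with $\p=\min V(\p)$.

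Next I would verify that $(\T(Y),\F(Y))$ is a hereditary torsion pair. Orthogonality $\Hom{R}{T}{F}=0$ holds because the image $N$ of any nonzero $T\to F$ would be a quotient of $T\in\T(Y)$ with $\Supp N\subseteq Y$ and a submodule of $F\in\F(Y)$ with $\Ass N\cap Y=\emptyset$, contradicting $\emptyset\neq \Ass N\subseteq\Supp N$. For existence of torsion sequences, set $t(M)=\{m\in M:\Supp{Rm}\subseteq Y\}$; every cyclic submodule of $t(M)$ lies in $\T(Y)$, and a standard noetherian argument gives $\Ass{M/t(M)}\cap Y=\emptyset$. Heredity is immediate from $\Supp N\subseteq\Supp M$ for submodules. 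For~(i), Gabriel's theorem provides the bijection between Gabriel topologies and hereditary torsion pairs, so it remains to show each hereditary torsion pair $(\T,\F)$ arises from a spec-closed $Y$. Taking $Y=\{\p:R/\p\in\T\}$, which is spec-closed via the surjection $R/\p\twoheadrightarrow R/\q$ when $\q\supseteq\p$, heredity forces $\T\subseteq\T(Y)$: for $M\in\T$ with $\p\in\Ass M$, the embedding $R/\p\hookrightarrow M$ yields $R/\p\in\T$, hence $\p\in Y$ and $\Supp M\subseteq Y$. The converse $\T(Y)\subseteq\T$ uses a prime filtration of finitely generated $M\in\T(Y)$ (the factors $R/\p_i$ satisfy $\p_i\in\Supp M\subseteq Y$), extended to arbitrary $M$ by direct limit closure of torsion classes.

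For~(ii), the inclusion $\T(Y)\subseteq\{M:\Hom{R}{M}{E(R/\q)}=0\text{ for all }\q\notin Y\}$ uses that any nonzero $f\dd M\to E(R/\q)$ produces $R/\q$ as a subquotient of $M$: the image of $f$ meets $R/\q$ essentially in $E(R/\q)$, and since $R/\q$ is a domain, every nonzero cyclic submodule of $R/\q$ is isomorphic to $R/\q$. Heredity of $\T(Y)$ then forces $R/\q\in\T(Y)$, hence $\q\in Y$. For the converse, given $\q\in\Supp M\setminus Y$, choose a cyclic $N\subseteq M$ with $N_\q\neq 0$, apply Nakayama to obtain a nonzero $R_\q$-linear map $N_\q\to k(\q)$, compose with $k(\q)\hookrightarrow E_{R_\q}(k(\q))=E(R/\q)$ and with the localization map $N\to N_\q$ to get a nonzero map $N\to E(R/\q)$, then extend to $M$ by injectivity. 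The claim $E(R/\p)\in\T(Y)$ for $\p\in Y$ is $\Supp E(R/\p)=V(\p)\subseteq Y$. Part~(iii) reduces to the tautology $\Hom{R}{R/\p}{M}=0\Leftrightarrow\p\notin\Ass M$ and to $\Ass E(R/\q)=\{\q\}$ from Lemma~\ref{lem:kp}. Part~(iv) is immediate from noetherianity: any finitely generated submodule $N$ of $M$ satisfies $\Supp N\subseteq\Supp M$ and $\Ass N\subseteq\Ass M$, so $N$ lies in $\T(Y)$, respectively $\F(Y)$, whenever $M$ does.

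The main obstacle I anticipate is the nontrivial direction of~(ii): when $\q\in\Supp M$ is not associated to $M$, producing a nonzero map to $E(R/\q)$ requires the Nakayama argument inside the localization $N_\q$ together with the $R_\q$-module structure of $E(R/\q)=E_{R_\q}(k(\q))$ recorded in Lemma~\ref{lem:kp}. The remaining parts reduce to standard facts about associated primes, essential extensions, and Gabriel's classification of hereditary torsion pairs.
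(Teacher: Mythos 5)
Your overall strategy (verify the Gabriel axioms by hand, construct the torsion radical $t(M)$ explicitly, invoke Gabriel's classical theorem to get the bijection, then characterize $\T(Y)$ and $\F(Y)$ via Hom into/out of the appropriate modules) tracks the paper's argument reasonably closely, just with more of the cited facts unfolded. Most of it is correct, including the essential-extension argument for the nontrivial direction of~(ii) and the Nakayama argument producing a nonzero map into $E(R/\q)$.

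However, your treatment of~(iii) contains a genuine error. You describe $\Hom{R}{R/\p}{M}=0\Leftrightarrow\p\notin\Ass{M}$ as a tautology, but the implication $\p\notin\Ass M \Rightarrow \Hom{R}{R/\p}{M}=0$ is false: for $R=k[x,y]$, $\p=(x)$ and $M=R/(x,y)$, one has $\p\notin\Ass M=\{(x,y)\}$ yet $\Hom{R}{R/\p}{M}=M\neq 0$ because $\p$ annihilates $M$. In general a nonzero map $R/\p\to M$ only gives a nonzero $m\in M$ with $\p\subseteq\Ann{m}$, and the associated primes of $Rm$ contain $\Ann{m}$ but need not equal $\p$. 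The correct argument for $\{M\mid\Ass M\cap Y=\emptyset\}\subseteq\{M\mid\Hom{R}{R/\p}{M}=0\text{ for }\p\in Y\}$ must use that $Y$ is closed under specialization: if $f\colon R/\p\to M$ is nonzero with $\p\in Y$, pick $\q\in\Ass{\Img f}\subseteq\Ass M$; then $\q\supseteq\p$, hence $\q\in Y$ by spec-closure, contradicting $\Ass M\cap Y=\emptyset$. You already use spec-closure in exactly this way when proving orthogonality of $(\T(Y),\F(Y))$, so this is an oversight rather than a wrong strategy, but as written the proof of~(iii) does not go through. A secondary, smaller gap is in~(iv): you establish $\T(Y)\subseteq\varinjlim(\T(Y)\cap\rfmod R)$ and $\F(Y)\subseteq\varinjlim(\F(Y)\cap\rfmod R)$ from direct limits of finitely generated submodules, but you do not address the reverse inclusions. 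For $\T(Y)$ this is immediate since localization commutes with direct limits; for $\F(Y)$ one needs closure under direct limits, which follows most cleanly from~(iii) (the kernel of $\Hom{R}{R/\p}{-}$ is closed under direct limits since $R/\p$ is finitely presented) -- the very item whose proof has the gap above.
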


\begin{proof}
First of all, observe that $\mathcal G_Y\cap\Spec{R}=Y$ as $Y$ is closed under specialization. For the fact that $\G_Y$ is a Gabriel topology we refer to \cite[Theorem VI.5.1 and \S VI.6.6]{St}. Next, $\T(Y)$ defined as above is clearly closed under submodules, factor modules, extensions and direct sums, so it is a torsion class in a hereditary torsion pair. We claim that $\F(Y)$ is the corresponding torsion-free class. Indeed, given $M \in \F(Y)$, denote by $t(M)$ the $\T(Y)$-torsion part of $M$. Then
\[ \Ass{t(M)} \subseteq \Ass{M} \cap \Ass{\T(Y)} \subseteq \Ass{M} \cap Y = \emptyset. \]
Hence $t(M) = 0$ by~\cite[2.4.3]{EJ} and $M$ is torsion-free. Conversely, if $M$ is torsion-free, we must have $\Ass{M} \cap Y = \emptyset$. This is since for any $\p \in \Ass{M}$ we have an embedding $R/\p \hookrightarrow M$, but if $\p \in Y$, we have $R/\p \in \T(Y)$ owing to the fact that $Y$ is closed under specialization and $\Supp{R/\p} = V(\p) \subseteq Y$. This proves the claim, showing that the latter correspondence is well-defined.

For statement (i), note that the inverse of $Y \mapsto \G_Y$ is given by the assignment $\G \mapsto \G \cap \Spec{R}$, where $\G$ is a Gabriel topology. This follows from the equality $\mathcal G_Y\cap\Spec{R}=Y$ and~\cite[VI.6.13 and VI.6.15]{St}. It is well-known that Gabriel topologies are in bijection with hereditary torsion pairs; the hereditary torsion pair $(\T'(Y),\F'(Y))$ corresponding to $\mathcal G_Y$ is given by
\[ \T'(Y) = \{M\in \ModR\mid \Ann{x} \in \G_Y \textrm{ for all } x\in M\}, \]
see~\cite[Theorem VI.5.1]{St}. Equivalently,
\[ \T'(Y)=\{ M\in \ModR \mid M_\p=0 \textrm{ for all } \p\in \Spec{R}\setminus Y\}, \]
see~\cite[Example, p. 168]{St},
%. Since $Y \subseteq \Spec{R}$ is closed under specialization, we infer that 
thus $\T(Y) = \T'(Y)$, which establishes the bijection between specialization closed subsets $Y$ and hereditary torsion pairs in $\rmod R$.

For statements (ii) and (iii), we refer to \cite[Proposition VI.3.6 and Exercise VI.24]{St} and \cite[Theorem 3.3.8]{EJ}.

Finally for (iv), we know from \cite[Lemma 4.5.2]{GT} that $(\T(Y)\cap\rfmod R ,\F(Y)\cap\rfmod R)$ is a torsion pair in $\rfmod R$ and that
\[ (\varinjlim (\T(Y)\cap\rfmod R),\varinjlim (\F(Y)\cap\rfmod R)) \]
is a torsion pair in $\rmod R$. Note that both $\T(Y)$ and $\F(Y)$ are closed under taking direct limits. In the case of $\F(Y)$ this follows from (iii). Hence
\[ \varinjlim (\T(Y)\cap\rfmod R) \subseteq \T(Y) \quad \textrm{and} \quad \varinjlim (\F(Y)\cap\rfmod R) \subseteq \F(Y), \]
and by Lemma~\ref{lem:comparing} we have equalities.
\end{proof}

\begin{remark} \label{rem:takahashi}
The bijections from Proposition~\ref{prop:Gabriel} can be reinterpreted in terms of the one-to-one-correspondence
\[ Y \mapsto \{M\in \rfmod R \mid \Ass{M}\subseteq Y\}, \]
established by Takahashi in \cite[Theorem 4.1]{Tak}, between all subsets of $\Spec R$ and the subcategories of $\rfmod R$ which are closed under submodules and extensions. Indeed, this correspondence restricts to a bijection $Y\mapsto \mathcal \{M\in \rfmod R \mid \Supp{M}\subseteq Y\}$ between the subsets of $\Spec R$ closed under specialization and the Serre subcategories (i.e. subcategories closed under submodules, factor modules and extensions) of $\rfmod R$, which in turn correspond bijectively to the hereditary torsion pairs in $\ModR$ via the assignment $\clS\mapsto \varinjlim \clS$, see \cite[Lemma 2.3]{Kcoh}. 

In fact,  specialization closed subsets of $\Spec R$ parametrize even all wide (or coherent) subcategories of $\rfmod R$, that is, all full abelian subcategories closed under extensions \cite[Theorem A]{Tak}, as well as all
narrow subcategories of  
$\rfmod R$, that is, all subcategories closed under extensions and cokernels \cite{SW}. 
\end{remark}

There is an alternative description of the class  $\{ M\in \rfmod R \mid \Ass{M}\subseteq Y \}$. Given a subset $Y \subseteq \Spec R$, we say that a module $M \in \rfmod R$ is \emph{$Y$-subfiltered} provided there exists a chain
\[ 0 = M_0 \subseteq M_1 \subseteq \dots \subseteq M_\ell = M \]
of submodules of $M$ such that for each $i = 0, \dots, \ell-1$, the module $M_{i+1}/M_i$ is isomorphic to a submodule of $R/\p_i$ for some $\p_i \in Y$. 

It was shown by Hochster (cf.\ \cite[Lemma 2.1]{PT})  that any module $M \in \rfmod R$ is $(\Ass M)$-subfiltered. Thus $\{M\in \rfmod R\mid \Ass{M}\subseteq Y\}$ is the subcategory of $\rfmod R$ given by all $Y$-subfiltered modules. Indeed,  If $0 \to N \to M \to M/N \to 0$ is a short exact sequence in $\rfmod R$, then $\Ass N \subseteq \Ass M$ and $\Ass M \subseteq \Ass N \cup \Ass M/N$, so the claim follows directly by Hochster's result.

\medskip
For our classification, we need to decide, which of the classes in $\rfmod{R}$ closed under submodules and extensions are torsion-free classes in $\rfmod R$. These again correspond bijectively to subsets of $\Spec{R}$ closed under specialization, as has recently been shown in \cite[Theorem 1]{SW}. We prefer to give a simple direct argument here: 

\begin{proposition} \label{prop:torsion_modR}
The assignment
\[ Y \mapsto (\T(Y)\cap\rfmod R, \F(Y)\cap\rfmod R), \]
using the notation from Proposition~\ref{prop:Gabriel}, gives a bijective correspondence between subsets $Y \subseteq \Spec{R}$ closed under specialization and torsion pairs in $\rfmod R$.
\end{proposition}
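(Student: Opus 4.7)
The plan is to reduce the bijection to Proposition~\ref{prop:Gabriel} by showing that every torsion pair $(\T_0,\F_0)$ in $\rfmod R$ is the restriction of a (necessarily unique) hereditary torsion pair in $\rmod R$. Well-definedness of the map $Y \mapsto (\T(Y)\cap\rfmod R, \F(Y)\cap\rfmod R)$ has already been established in Proposition~\ref{prop:Gabriel}(iv). For injectivity, if $Y \ne Y'$ are specialization closed and $\p \in Y \setminus Y'$, then $R/\p$ lies in $\T(Y)\cap\rfmod R$ because $\Supp{R/\p} = V(\p) \subseteq Y$, but not in $\T(Y')\cap\rfmod R$, since $\p \in V(\p)$ but $\p \notin Y'$.

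For surjectivity, I would start with $(\T_0,\F_0)$ and define
\[ \F = \{M \in \rmod R \mid \text{every finitely generated submodule of } M \text{ lies in } \F_0\}. \]
Tautologically $\F \cap \rfmod R = \F_0$ and $\F$ is closed under submodules. Closure under extensions follows from the noetherian hypothesis: given $0 \to A \to B \to C \to 0$ with $A,C \in \F$ and $X \subseteq B$ finitely generated, both $X \cap A$ and the image of $X$ in $C$ are finitely generated, hence lie in $\F_0$, so $X \in \F_0$ by extension closure of $\F_0$.

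The main obstacle will be showing closure of $\F$ under arbitrary direct products. Given $\{M_i\}_{i \in I} \subseteq \F$ and a finitely generated $X \subseteq \prod_i M_i$ with generators $x_1,\dots,x_n$, write $x_j = (x_j^{(i)})_{i \in I}$ and set $K_i = \{r \in R^n \mid \sum_j r_j x_j^{(i)} = 0 \text{ in } M_i\}$. Each $R^n/K_i$ embeds in $M_i$ and therefore belongs to $\F_0$. Noetherianness of $R^n$ yields a minimal element $\bigcap_{i \in J} K_i$ in the family of finite subintersections of the $K_i$'s, and then $\bigcap_{i \in I} K_i = \bigcap_{i \in J} K_i$ for this finite subset $J \subseteq I$. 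Consequently $X \cong R^n/\bigcap_{i \in J} K_i$ embeds into the finite direct sum $\bigoplus_{i \in J} R^n/K_i \in \F_0$, and submodule closure of $\F_0$ gives $X \in \F_0$.

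It follows that $\F$ is the torsion-free class of a hereditary torsion pair $(\T,\F)$ in $\rmod R$, which by Proposition~\ref{prop:Gabriel} has the form $(\T(Y),\F(Y))$ for a unique specialization closed $Y \subseteq \Spec R$. Any map from $M \in \T_0$ into $F \in \F$ has image both in $\T_0$ (as a factor of $M$) and in $\F_0 = \F \cap \rfmod R$ (as a finitely generated submodule of $F$), hence is zero; this shows $\T_0 \subseteq \T(Y) \cap \rfmod R$. Since also $\F_0 = \F(Y) \cap \rfmod R$, Lemma~\ref{lem:comparing} applied to the two torsion pairs $(\T_0,\F_0)$ and $(\T(Y)\cap\rfmod R,\F(Y)\cap\rfmod R)$ in $\rfmod R$ produces the required equalities.
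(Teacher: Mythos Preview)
Your argument has a genuine gap at the sentence ``It follows that $\F$ is the torsion-free class of a hereditary torsion pair $(\T,\F)$ in $\rmod R$.'' What you have established --- closure of $\F$ under submodules, extensions, and products --- shows only that $\F$ is a torsion-free class. Every torsion-free class is closed under submodules; hereditariness is the stronger requirement that the torsion class $\T$ be closed under submodules, or equivalently that $\F$ be closed under injective envelopes. Nothing in your construction forces this, and without it you cannot invoke Proposition~\ref{prop:Gabriel} to obtain the specialization closed set $Y$.

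The gap is not repairable by a formal patch: commutativity must enter the argument at this point. Over the path algebra of $1 \to 2$ (a noetherian ring), the torsion pair $(\T_0,\F_0) = (\add\{P_2,S_2\},\add S_1)$ in $\rfmod R$ is not hereditary since $S_1 \subseteq P_2$, and your class $\F$ becomes $\Add S_1$, whose associated torsion class in $\rmod R$ still contains $P_2$ but not its submodule $S_1$. So the lift to $\rmod R$ is not hereditary either, and no analogue of Proposition~\ref{prop:Gabriel} applies. The paper's proof supplies the missing commutative input: it uses Takahashi's classification to write $\F_0 = \{M \in \rfmod R \mid \Ass M \subseteq X\}$ for some $X \subseteq \Spec R$, and then for the largest specialization closed $Y$ disjoint from $X$ it shows via Nakayama's lemma that $\Supp N \subseteq Y$ for every $N \in \T_0$. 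This places $\T_0$ inside the Serre subcategory $\T(Y) \cap \rfmod R$, after which Lemma~\ref{lem:comparing} finishes the job. Your extension-to-$\rmod R$ construction and the product argument are correct and pleasant, but they bypass exactly the step where the commutative hypothesis does its work.
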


\begin{proof}
By Proposition~\ref{prop:Gabriel}, $(\T(Y)\cap\rfmod R, \F(Y)\cap\rfmod R)$ is clearly a torsion pair in $\rfmod R$ for every specialization closed set $Y$, and the assignment is injective since $\p \in Y$ \iff $R/\p \in \T(Y)$. We must prove the surjectivity.

To this end, suppose that $(\T,\F)$ is a torsion pair in $\rfmod R$. By \cite[Theorem~4.1]{Tak} (cf.~Remark~\ref{rem:takahashi}) there is a subset $X\subseteq \Spec R$ such that $\F = \{ M\in \rfmod R \mid \Ass{M}\subseteq X \}$. Denote by $Y$ the maximal specialization closed subset of $\Spec R$ disjoint from $X$. That is, $Y = \{\p\in\Spec R \mid V(\p) \cap X = \emptyset\}$. We claim that
\[ \T \subseteq \{ M\in\rfmod R \mid \Supp{M} \subseteq Y \}. \]
Indeed, given $\p \in X$, we have $R/\p \in \F$. Then for any $N \in \T$, $\Hom RN{R/\p} = 0$ implies $\Hom {R_\p}{N_\p}{k(\p)} = 0$, so the finitely generated $R_\p$-module $N_\p$ has no maximal submodules. That is, $N_\p = 0$ by the Nakayama Lemma (see e.g.~\cite[1.2.28]{EJ}). In particular, $\Supp N$ is specialization closed and disjoint from $X$, hence $\Supp N \subseteq Y$. This proves the claim. We have shown that
\[ \T \subseteq \T(Y)\cap\rfmod R \quad \textrm{and} \quad \F \subseteq \F(Y)\cap\rfmod R, \]
which by Lemma~\ref{lem:comparing} implies that $\T = \T(Y)\cap\rfmod R$ and $\F = \F(Y)\cap\rfmod R$.
\end{proof}

Let us now give a relation to $1$-cotilting modules, using results of Bazzoni, Buan and Krause.

\begin{proposition} \label{prop:1-cotilt}
Let $R$ be a (not necessarily commutative) right noetherian ring. Then the $1$-cotilting classes $\C$ in $\rmod R$ correspond bijectively to the torsion-free classes $\F$ in $\rfmod R$ containing $R$. The correspondence is given by the assignments
\[ \C \mapsto \F = \C \cap \rfmod R \quad \textrm{and} \quad \F \mapsto \varinjlim \F. \]
\end{proposition}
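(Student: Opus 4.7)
The plan is to establish that $\C \mapsto \F := \C \cap \rfmod R$ and $\F \mapsto \varinjlim \F$ are mutually inverse bijections. This splits into three tasks: verifying each map lands in the intended target, and checking that the two composites are the identity.

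For the forward direction, suppose $\C = {}^\perp C$ is $1$-cotilting. Using the identification $\C = \Cog(C)$ recalled at the start of Section~\ref{sec:dim_one}, one sees $\C$ is closed under submodules, extensions and arbitrary direct products, so it is a torsion-free class in $\rmod R$. Condition (C3) in Definition~\ref{def:cotilt} places an injective cogenerator $W$ inside $\Cog(C)$; since every module embeds into a power of $W$, every module, and in particular $R$, embeds into an object of $\C$, giving $R \in \C$ by closure under submodules. Intersecting with $\rfmod R$ preserves these closure properties, and right noetherianness ensures the torsion part $t(M)$ of any $M \in \rfmod R$ is again finitely generated, so the canonical torsion decomposition sits inside $\rfmod R$ and $\F$ is a torsion-free class in $\rfmod R$ containing $R$. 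The equality $\C = \varinjlim \F$ then follows from Bazzoni's theorem that every $1$-cotilting module is pure-injective, whence $\C$ is closed under direct limits: every $M \in \C$ is the direct union of its finitely generated submodules, all of which lie in $\F$ by closure of $\C$ under submodules.

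For the backward direction, given a torsion-free class $\F$ in $\rfmod R$ containing $R$, one must prove $\C := \varinjlim \F$ is $1$-cotilting. The first step is to verify that $\C$ is a torsion-free class in $\rmod R$: closure under submodules and extensions follows from exactness of filtered colimits combined with the analogous properties of $\F$, while closure under direct products is the delicate point and requires establishing that $\C$ is actually a definable subcategory of $\rmod R$. The second step is to invoke the results of Bazzoni, Buan and Krause on definable torsion-free classes over right noetherian rings: a definable torsion-free class containing $R$ is the cotilting class of some $1$-cotilting module. The identity $(\varinjlim \F) \cap \rfmod R = \F$ then holds by a finite-generation argument: any finitely generated module in $\varinjlim \F$ factors through some object of $\F$ and hence lies in $\F$ by closure under submodules.

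The main obstacle is the backward direction. The gap between a torsion-free class in $\rfmod R$ (essentially categorical data) and a $1$-cotilting class (which carries the homological data coded in the self-orthogonality condition (C2) and the coresolution (C3)) is substantial; bridging it requires the pure-injectivity of $1$-cotilting modules and the resulting definability of cotilting classes, since neither closure of $\varinjlim \F$ under arbitrary direct products nor the existence of a cogenerating $1$-cotilting module follows from the torsion-free axioms alone.
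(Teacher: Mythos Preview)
Your proposal is correct and takes the same approach as the paper, whose proof is simply a citation of \cite[Theorem~1.5]{BK} together with Bazzoni's pure-injectivity result \cite{Ba} (see also \cite[Theorem~8.2.5]{GT}). Your outline unpacks these references: the forward direction and the identity $(\varinjlim\F)\cap\rfmod R=\F$ are routine, while the definability of $\varinjlim\F$ and the passage from a definable torsion-free class containing $R$ to a $1$-cotilting class---which you rightly flag as the delicate point---are exactly the content of Buan--Krause.
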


\begin{proof} 
This follows from~\cite[Theorem 1.5]{BK}, since all 1-cotilting modules are pure-injective by \cite{Ba}. See also~\cite[Theorem 8.2.5]{GT}.
\end{proof}

As a direct consequence, we get a characterization and a classification of $1$-cotilting classes in $\rmod R$ for $R$ commutative. Note that for $R$ non-commutative the torsion pair having as the torsion-free class a $1$-cotilting class need not be hereditary; see \cite[Theorem 2.5]{D}.

\begin{theorem} \label{thm:class-1-cot}
Let $R$ be a commutative noetherian ring and $\C \subseteq \rmod R$. Then $\C$ is $1$-cotilting \iff $\C$ is the torsion-free class in a faithful hereditary torsion pair $(\T,\C)$. In particular, the $1$-cotilting classes $\C$ in $\rmod R$ are parametrized by the subsets $Y$ of $\Spec R$ closed under specialization with $\Ass R \cap Y=\emptyset$. The parametrization is given by
\[
\C \mapsto \Spec R \setminus \Ass{(\C)} 
%\cap \rfmod R)}
\quad \textrm{and} \quad
Y \mapsto \{ M \in \rmod R \mid \Ass M \cap Y = \emptyset \}.
\]
\end{theorem}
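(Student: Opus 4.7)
The plan is to glue together three correspondences already established in this section: Proposition~\ref{prop:1-cotilt} identifies $1$-cotilting classes $\C$ in $\rmod R$ with the torsion-free classes $\F$ of $\rfmod R$ containing $R$, via $\C\mapsto\C\cap\rfmod R$ and $\F\mapsto\varinjlim\F$; Proposition~\ref{prop:torsion_modR} parametrizes torsion pairs in $\rfmod R$ by specialization-closed subsets of $\Spec R$; and Proposition~\ref{prop:Gabriel} does the same for hereditary torsion pairs in $\rmod R$, with part (iv) matching the two parametrizations via direct limits. The theorem then falls out by composing these bijections.

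For the forward implication of the characterization, if $\C$ is $1$-cotilting I set $\F=\C\cap\rfmod R$; by Proposition~\ref{prop:1-cotilt} this is a torsion-free class in $\rfmod R$ containing $R$ with $\C=\varinjlim\F$. Proposition~\ref{prop:torsion_modR} then supplies a specialization-closed $Y\subseteq\Spec R$ with $\F=\F(Y)\cap\rfmod R$, and the condition $R\in\F$ translates immediately to $\Ass R\cap Y=\emptyset$. Proposition~\ref{prop:Gabriel}(iv) yields $\C=\varinjlim(\F(Y)\cap\rfmod R)=\F(Y)$, so $(\T(Y),\C)$ is a faithful hereditary torsion pair. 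Conversely, given any faithful hereditary torsion pair $(\T,\C)$, Proposition~\ref{prop:Gabriel}(i) writes $(\T,\C)=(\T(Y),\F(Y))$ for some specialization-closed $Y$, with $\Ass R\cap Y=\emptyset$ by faithfulness; Proposition~\ref{prop:Gabriel}(iv) then gives $\C=\varinjlim(\C\cap\rfmod R)$ with $\C\cap\rfmod R$ a torsion-free class in $\rfmod R$ containing $R$, so Proposition~\ref{prop:1-cotilt} certifies that $\C$ is $1$-cotilting.

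Combining the two directions delivers a bijection between specialization-closed subsets $Y\subseteq\Spec R$ disjoint from $\Ass R$ and $1$-cotilting classes in $\rmod R$, sending $Y$ to $\F(Y)=\{ M \mid \Ass M\cap Y=\emptyset \}$. To identify the inverse as $\C\mapsto\Spec R\setminus\Ass\C$, I would check the equality $\Ass\F(Y)=\Spec R\setminus Y$: the inclusion $\subseteq$ is immediate from the definition of $\F(Y)$, and for $\supseteq$ each $\p\notin Y$ satisfies $R/\p\in\F(Y)$ because $\Ass(R/\p)=\{ \p \}$, whence $\p\in\Ass\F(Y)$. The real work has already been done by the preparatory propositions, especially Proposition~\ref{prop:Gabriel}(iv) which bridges the finite-type and the big parametrizations; no new obstacle appears in the present assembly, which is essentially a bookkeeping exercise.
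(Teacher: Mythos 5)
Your proposal is correct and takes essentially the same route as the paper: both compose Propositions~\ref{prop:1-cotilt}, \ref{prop:torsion_modR}, and \ref{prop:Gabriel} (with part (iv) bridging the finite-type and big torsion pairs), and then verify the explicit parametrization by an elementary check on associated primes. The paper phrases the last step as $\Ass(\C\cap\rfmod R)=\Ass\C$ rather than $\Ass\F(Y)=\Spec R\setminus Y$, but the two checks are the same argument viewed from opposite ends of the bijection.
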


\begin{proof}
By Proposition~\ref{prop:1-cotilt}, $1$-cotilting classes in $\rmod R$ correspond bijectively to torsion-free classes in $\rfmod R$ containing $R$, which by Propositions~\ref{prop:Gabriel} and~\ref{prop:torsion_modR} and~\cite[Lemma 4.5.2]{GT} correspond bijectively to faithful hereditary torsion pairs in $\rmod R$. Composing the two assignments amounts to identifying a cotilting class $\C$ with the torsion-free part of the hereditary torsion pair. This shows the first part.

For the parametrization, we can use Proposition~\ref{prop:Gabriel}, as soon as we prove that
\[ \Ass{(\C \cap \rfmod R)} = \Ass{\C} \]
for any $1$-cotilting class $\C$. Clearly, $\Ass{(\C \cap \rfmod R)} \subseteq \Ass{\C}$. Conversely,
if $M \in \C$ and $\p\in\Ass M$, then $R/\p$ is embedded in $M$ and therefore $\{\p\}=\Ass{(R/\p)}$ is contained in $\Ass{(\C \cap \rfmod R)}$.
%For the other implication, we can express by Proposition~\ref{prop:1-cotilt} every $M \in \C$ as $M = \varinjlim_{i \in I} M_i$, where $M_i \in \C \cap \rfmod R$ for all $i \in I$. If $\p \notin \Ass{(\C \cap \rfmod R)}$, then $\Hom{R_\p}{k(\p)}{(M_i)_\p} = 0$ for all $i \in I$ by Lemma~\ref{lem:Bass}, and so
%
%\[ \Hom{R_\p}{k(\p)}{M_\p} \cong \Hom{R_\p}{k(\p)}{\varinjlim_{i \in I}(M_i)_\p} \cong \varinjlim_{i \in I} \Hom{R_\p}{k(\p)}{(M_i)_\p} = 0. \]
%
%Thus, $\p\notin\Ass M$ by Lemma~\ref{lem:Bass}, as required.
\end{proof}

Now, we will give a connection to tilting classes. For this purpose, we recall the concept of a transpose from~\cite{AUBR}.

\begin{definition} \label{def:Tr}
Let $C \in \rmod{R}$ and $P_1 \overset{f}\to P_0 \to C \to 0$ be a projective presentation in $\rmod{R}$. Then an \emph{Auslander-Bridger transpose} of $C$, denoted by $\Tr C$, is the cokernel of $f^*$, where $(-)^* = \Hom R-R$. That is, we have an exact sequence
\[ P_0^* \overset{f^*}\la P_1^* \la \Tr C \la 0. \]
\end{definition}

Note that by~\cite[Corollary 2.3]{AUBR}, $\Tr C$ is uniquely determined up to adding or splitting off a projective direct summand. The following lemma gives some homological formulae for  the transpose.

\begin{lemma} \label{lem:pdn}
Let $R$ be a (not necessarily commutative) left noetherian ring, and let $0 \ne U\in\lfmod R$ and $n \ge 0$ \st $\Ext {i}R{U}R = 0$ for all  $i = 0, 1, \dots, n$. 
 Then we have:
\begin{ii}
  \item $\pd R{\Tr{\Syz{n}{U}}} = n+1$;
  \item $\Ext nR{U}{-}$ and $\Tor 1R{\Tr{\Syz{n}{U}}}{-}$ are isomorphic functors.
  \item $\Ext 1R{\Tr{\Syz{n}{U}}}-$ and $\Tor nR-{U}$ are isomorphic functors.
\end{ii}
\end{lemma}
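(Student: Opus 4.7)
The plan is to build an explicit projective resolution of $\Tr{\Syz{n}{U}}$ of length exactly $n+1$ by dualising a minimal finite-type projective resolution of $U$, and to read off all three statements by computing $\Tor$ and $\Ext$ from this resolution.

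First, since $R$ is left noetherian and $U$ is finitely generated, choose a resolution $\cdots \to P_2 \to P_1 \to P_0 \to U \to 0$ with every $P_i$ finitely generated projective. The presentation $P_{n+1} \to P_n \to \Syz{n}{U} \to 0$ read off from this resolution yields, by definition, $\Tr{\Syz{n}{U}} = \Coker(P_n^\ast \to P_{n+1}^\ast)$, where $(-)^\ast = \Hom{R}{-}{R}$. Applying $(-)^\ast$ to the resolution of $U$ produces the cochain complex
\[ 0 \to U^\ast \to P_0^\ast \to P_1^\ast \to P_2^\ast \to \cdots \]
whose $i$-th cohomology is $\Ext{i}{R}{U}{R}$. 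The vanishing hypothesis for $i = 0, 1, \dots, n$ (which in particular gives $U^\ast = 0$) therefore makes
\[ 0 \to P_0^\ast \to P_1^\ast \to \cdots \to P_n^\ast \to P_{n+1}^\ast \to \Tr{\Syz{n}{U}} \to 0 \]
exact, i.e.\ it is a projective resolution of $\Tr{\Syz{n}{U}}$ of length at most $n+1$; this gives the upper bound $\pd{R}{\Tr{\Syz{n}{U}}} \le n+1$ in~(i).

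Next, I would reindex this resolution as $0 \to Q_{n+1} \to Q_n \to \cdots \to Q_0 \to \Tr{\Syz{n}{U}} \to 0$, with $Q_i = P_{n+1-i}^\ast$, and compute $\Tor$ and $\Ext$ against it. The natural isomorphism $P_i^\ast \otimes_R M \cong \Hom{R}{P_i}{M}$ available for any finitely generated projective $P_i$ identifies the complex $Q_\bullet \otimes_R M$ (with $M$ a left $R$-module) termwise with the cochain complex $\Hom{R}{P_\bullet}{M}$, whose cohomologies are the $\Ext{i}{R}{U}{M}$. Matching degrees yields a natural isomorphism
\[ \Tor{i}{R}{\Tr{\Syz{n}{U}}}{M} \cong \Ext{n+1-i}{R}{U}{M} \qquad \text{for all } 1 \le i \le n+1, \]
and the case $i=1$ is precisely~(ii). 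Symmetrically, for a right $R$-module $N$ the natural isomorphism $\Hom{R}{P_i^\ast}{N} \cong N \otimes_R P_i$ identifies $\Hom{R}{Q_\bullet}{N}$ with $N \otimes_R P_\bullet$ in the appropriate range, and extracting the first cohomology yields $\Ext{1}{R}{\Tr{\Syz{n}{U}}}{N} \cong \Tor{n}{R}{N}{U}$, which is~(iii).

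Finally, to upgrade the inequality in~(i) to an equality I would specialise the $i = n+1$ case of the identification above to the natural isomorphism $\Tor{n+1}{R}{\Tr{\Syz{n}{U}}}{M} \cong \Hom{R}{U}{M}$ and take $M = U$: since $U \ne 0$, the identity endomorphism $\mathrm{id}_U$ is a nonzero element of $\Hom{R}{U}{U}$, so $\Tor{n+1}{R}{\Tr{\Syz{n}{U}}}{U} \ne 0$ and hence $\pd{R}{\Tr{\Syz{n}{U}}} \ge n+1$. No substantial obstacle is expected beyond careful bookkeeping: everything rests on the elementary isomorphism $P^\ast \otimes_R (-) \cong \Hom{R}{P}{-}$ (and its left–right symmetric companion) for finitely generated projective $P$, so the only things to watch are the reindexing $Q_i = P_{n+1-i}^\ast$ and the left/right module conventions, given that $R$ is not assumed commutative here.
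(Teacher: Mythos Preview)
Your proof is correct and follows essentially the same approach as the paper: dualise a finitely generated projective resolution of $U$ to obtain a length-$(n+1)$ projective resolution of $\Tr{\Syz{n}{U}}$, and read off (ii) and (iii) from the standard isomorphisms $P^\ast \otimes_R M \cong \Hom{R}{P}{M}$ and $\Hom{R}{P^\ast}{N} \cong N \otimes_R P$. The only minor variation is in the lower bound of (i): the paper observes directly that if $\pd{R}{\Tr{\Syz{n}{U}}} \le n$ then $P_0^\ast \to P_1^\ast$ would split, forcing $P_1 \to P_0$ to be a split epimorphism and hence $U = 0$; your argument via $\Tor{n+1}{R}{\Tr{\Syz{n}{U}}}{U} \cong \Hom{R}{U}{U} \ne 0$ is an equally valid alternative.
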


\begin{proof}
(i) Consider the beginning of a projective resolution of $U$:
\[ Q_{n+1} \overset{f_{n+1}}\la Q_n \overset{f_n}\la \dots \overset{f_1}\la Q_0 \la U \la 0.  \]
Denoting as in Definition~\ref{def:Tr} by $(-)^*$ the functor $\Hom R-R$, we get a sequence
\[
0 \longleftarrow \Tr{\Syz{n}{U}}
\longleftarrow Q^*_{n+1}
\overset{f^*_{n+1}}\longleftarrow Q^*_n
\overset{f^*_n}\longleftarrow \dots
\overset{f^*_1}\longleftarrow Q^*_0 \longleftarrow 0.
\]
which is exact by assumption. This shows that $\pd R{\Tr{\Syz{n}{U}}} \le n+1$. If $\pd R{\Tr{\Syz{n}{U}}} \le n$, then $f_1^*$ is a split monomorphism, so $f_1 = (f_1^*)^*$ is a split epimorphism, a contradiction.

(ii), (iii) These parts follow immediately using the well-known natural isomorphisms $\Hom RQN \cong Q^* \otimes_R N$ and $\Hom R{Q^*}M \cong M \otimes_R Q$ for all $M \in \rmod R$, $Q, N \in \lmod R$ with $Q$ finitely generated and projective.
\end{proof}

It follows that all $1$-cotilting classes over a one-sided noetherian ring are of cofinite type, that is, they are associated to $1$-tilting classes by the elementary duality: 

\begin{theorem} \label{thm:cofinite type}
Let $R$ be a (not necessarily commutative) left noetherian ring. The assignment $T\mapsto T^+$ induces a bijection between equivalence classes of $1$-tilting right $R$-modules and equivalence classes of $1$-cotilting left $R$-modules. 

In particular, given a $1$-cotilting class $\C$ in $\lmod R$, there is a class $\mathcal U\subseteq\lfmod R$ with $U^\ast=0$ for all $U\in\mathcal U$ \st
\[\mathcal C=\{ M\in \lmod R \mid \Hom{R}{U}{M}   =0 \textrm{ for all } U\in \mathcal U\}.\]
The preimage of $\C$ under the assignment above is then the $1$-tilting class
\begin{align*}
\mathcal D =& \{ M \in \rmod R \mid \Ext 1R{\Tr{U}}{M}=0 \textrm{ for all } U\in \mathcal U\} = \\
           & \{ M\in \ModR \mid {M}\otimes{U}=0 \textrm{ for all } U\in \mathcal U\}.
\end{align*}
\end{theorem}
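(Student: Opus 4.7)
The plan is to establish surjectivity of the assignment $T\mapsto T^+$. Injectivity was recorded after Lemma~\ref{lem:cofinite}, so together these give the bijection between equivalence classes of $1$-tilting right $R$-modules and $1$-cotilting left $R$-modules, and simultaneously produce the class $\mathcal U$ described in the statement. Given a $1$-cotilting class $\mathcal C$ in $\lmod R$, I would exhibit $\mathcal U\subseteq\lfmod R$ with $U^*=0$ for every $U\in\mathcal U$ satisfying $\mathcal C=\{M \mid \Hom R U M = 0 \text{ for all } U\in \mathcal U\}$, and then pass through the Auslander--Bridger transpose to obtain the sought $1$-tilting class in $\rmod R$.

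First I would apply Proposition~\ref{prop:1-cotilt}: $\mathcal F := \mathcal C \cap \lfmod R$ is a torsion-free class in $\lfmod R$ containing $R$, with $\mathcal C = \varinjlim \mathcal F$. Since any $1$-cotilting class has the form $\Cog(C)$, it is also the torsion-free part of a torsion pair $(\tilde{\mathcal U}, \mathcal C)$ in $\lmod R$; I would take $\mathcal U := \tilde{\mathcal U} \cap \lfmod R$, and the containment $R \in \mathcal F \subseteq \mathcal C$ immediately gives $U^* = \Hom R U R = 0$ for all $U \in \mathcal U$.

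The technical heart is then to verify
\[ \mathcal C = \{M \in \lmod R \mid \Hom R U M = 0 \text{ for all } U \in \mathcal U\}. \]
One inclusion uses only that each $U \in \mathcal U$ is finitely presented over the left noetherian ring $R$, so $\Hom R U{-}$ commutes with the direct limit presentation of any $M \in \mathcal C = \varinjlim \mathcal F$, while $\Hom R U F = 0$ for every $F \in \mathcal F \subseteq \mathcal C$ by definition of $\tilde{\mathcal U}$. For the reverse inclusion, given any finitely generated submodule $N \subseteq M$ I would form its torsion decomposition $0 \to t(N) \to N \to N/t(N) \to 0$ with respect to $(\tilde{\mathcal U}, \mathcal C)$. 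Left noetherianness forces $t(N) \in \lfmod R$, so $t(N) \in \mathcal U$; the hypothesis then makes the composite $t(N) \hookrightarrow N \hookrightarrow M$ zero, giving $t(N) = 0$ and $N \in \mathcal C \cap \lfmod R = \mathcal F$. Writing $M$ as the direct limit of its finitely generated submodules concludes $M \in \varinjlim \mathcal F = \mathcal C$.

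With this in hand, Lemma~\ref{lem:pdn} applies with $n = 0$ to each $U \in \mathcal U$: $\pd R{\Tr U} = 1$ and there are natural isomorphisms $\Hom R U{-} \cong \Tor 1 R {\Tr U}{-}$ and $\Ext 1 R {\Tr U}{-} \cong - \otimes_R U$. Setting $\clS := \{\Tr U \mid U \in \mathcal U\} \subseteq \rfmod R$, Lemma~\ref{lem:cofinite}(i) produces a $1$-tilting class $\mathcal D := \clS^\perp$ in $\rmod R$ whose associated cotilting class in $\lmod R$ is $\clS^\intercal$; the functor isomorphisms rewrite these as $\mathcal D = \{M \in \rmod R \mid M \otimes_R U = 0 \text{ for all } U \in \mathcal U\}$ and $\clS^\intercal = \{M \in \lmod R \mid \Hom R U M = 0 \text{ for all } U \in \mathcal U\} = \mathcal C$, which is exactly the description demanded by the theorem. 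The main obstacle is the reverse inclusion in the displayed equality: left noetherianness is essential there, as it is what forces the torsion submodule of a finitely generated module to be finitely generated, allowing the Hom-vanishing hypothesis against $\mathcal U$ to witness membership in~$\mathcal C$.
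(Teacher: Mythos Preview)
Your argument is correct and follows the same route as the paper's proof: both obtain $\mathcal U$ as the torsion class in $\lfmod R$ partnering $\mathcal F=\mathcal C\cap\lfmod R$, use $R\in\mathcal F$ to get $U^*=0$, and then invoke Lemma~\ref{lem:pdn} with $n=0$ together with Lemma~\ref{lem:cofinite} to produce $\clS=\{\Tr U\mid U\in\mathcal U\}$ and conclude $\mathcal C=\clS^\intercal$. The only substantive difference is that where the paper cites \cite[Theorem 4.5.2]{GT} for the identity $\varinjlim\mathcal F=\{M\mid \Hom RUM=0\text{ for all }U\in\mathcal U\}$, you supply a direct proof of both inclusions; your argument for the reverse inclusion (torsion part of a finitely generated submodule is finitely generated, hence lies in $\mathcal U$, hence vanishes) is exactly the standard one underlying that reference.
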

 \begin{proof}
By a left-hand version of Proposition \ref{prop:1-cotilt} there is a torsion pair $(\mathcal U, \mathcal F)$ in $\lfmod R$ such that $R\in\mathcal F$ and $\mathcal C=\varinjlim \mathcal F= \{ M\in \lmod R \mid \Hom{R}{U}{M}   =0 \textrm{ for all } U\in \mathcal U\}$, see also \cite[Theorem 4.5.2]{GT}.
By Lemma \ref{lem:pdn}(i) and (ii) for $n=0$ the class $\mathcal S=\{\Tr U \mid U\in\mathcal U\}\subseteq\rfmod R$ consists of finitely presented modules of projective dimension one, and $\mathcal C=\mathcal S^\intercal$. 
Now apply Lemma \ref{lem:cofinite} and \ref{lem:pdn}(iii).
\end{proof}

Now we summarize our findings for the one-dimensional setting over commutative noetherian rings in the main theorem of the section.

\begin{theorem} \label{thm:class-dim1}
Let $R$ be a commutative noetherian ring. Then there are bijections between the following sets:
\begin{ii}
\item $1$-tilting classes $\D$ in $\rmod R$.
\item $1$-cotilting classes $\C$ in $\rmod R$.
\item Subsets $Y \subseteq \Spec R$ closed under specialization \st $\Ass R \cap Y = \emptyset$.
\item Faithful hereditary torsion pairs $(\T,\F)$ in $\rmod R$.
\item Torsion pairs $(\T',\F')$ in $\rfmod R$ with $R \in \F'$.
\end{ii}
\end{theorem}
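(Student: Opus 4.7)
The plan is essentially a bookkeeping exercise: each of the four bijections $(\mathrm{i})\leftrightarrow(\mathrm{ii})$, $(\mathrm{ii})\leftrightarrow(\mathrm{iii})$, $(\mathrm{iii})\leftrightarrow(\mathrm{iv})$, $(\mathrm{iv})\leftrightarrow(\mathrm{v})$ has already been established in the preceding material, and the work is to assemble them into a single commutative diagram of bijections.

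I would begin with $(\mathrm{ii})\leftrightarrow(\mathrm{iii})$, which is precisely the content of Theorem~\ref{thm:class-1-cot}: a $1$-cotilting class $\C$ corresponds to $Y = \Spec R \setminus \Ass\C$, with inverse $Y \mapsto \{M \mid \Ass M \cap Y = \emptyset\}$. For $(\mathrm{i})\leftrightarrow(\mathrm{ii})$ I would invoke Theorem~\ref{thm:cofinite type}, which in the commutative noetherian setting yields a bijection $T \mapsto T^+$ between equivalence classes of $1$-tilting and $1$-cotilting $R$-modules and thus, by passing to the induced classes, between $\D$ and $\C$.

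The step $(\mathrm{iii})\leftrightarrow(\mathrm{iv})$ would be extracted from Proposition~\ref{prop:Gabriel}(i), which already provides a bijection between specialization closed subsets $Y$ of $\Spec R$ and hereditary torsion pairs $(\T(Y),\F(Y))$ in $\rmod R$. I would just need the observation that, via the description $\F(Y) = \{M \mid \Ass M \cap Y = \emptyset\}$, the condition $\Ass R \cap Y = \emptyset$ is equivalent to $R \in \F(Y)$, i.e.\ to the torsion pair being faithful in the sense of Definition~\ref{def:tor_pair}. Finally for $(\mathrm{iv})\leftrightarrow(\mathrm{v})$ I would combine Proposition~\ref{prop:Gabriel}(iv) (hereditary torsion pairs in $\rmod R$ are of finite type, and hence determined by their restriction to $\rfmod R$) with Proposition~\ref{prop:torsion_modR} (torsion pairs in $\rfmod R$ are parametrized by specialization closed subsets of $\Spec R$). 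Composing these yields the bijection $(\T,\F) \mapsto (\T \cap \rfmod R, \F \cap \rfmod R)$, and again $R \in \F$ is equivalent to $R \in \F' = \F \cap \rfmod R$ since $R$ is finitely generated.

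I do not expect a genuine obstacle: the theorem is a synthesis packaging the theory developed in the section. The only mild care required is to check that the four bijections fit together compatibly, so that, for instance, the class $\C$ attached to $Y$ in $(\mathrm{ii})\leftrightarrow(\mathrm{iii})$ is the same as the one obtained by first building the hereditary torsion pair $(\T(Y),\F(Y))$ via $(\mathrm{iii})\leftrightarrow(\mathrm{iv})$ and reading off its torsion-free part; but this is immediate from the explicit formula $\F(Y) = \{M \mid \Ass M \cap Y = \emptyset\}$ shared by both descriptions.
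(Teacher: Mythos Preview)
Your proposal is correct and follows essentially the same approach as the paper: both assemble the theorem from Theorem~\ref{thm:class-1-cot}, Theorem~\ref{thm:cofinite type}, and Propositions~\ref{prop:Gabriel} and~\ref{prop:torsion_modR}, with Proposition~\ref{prop:1-cotilt} implicit in the finite-type step. The only cosmetic difference is that the paper routes most bijections through (ii) as a hub (e.g.\ (ii)$\to$(iv) directly via Theorem~\ref{thm:class-1-cot}, and (v)$\to$(ii) via Proposition~\ref{prop:1-cotilt}), whereas you chain (iii)$\leftrightarrow$(iv)$\leftrightarrow$(v); the underlying results invoked are the same.
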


\begin{proof}
Let us first explicitly state the bijections:

\begin{center}
\begin{tabular}{r@{$\ \to\ $}l@{\qquad}r@{$\ \mapsto\ $}l}
\multicolumn{2}{c}{Bijection} & \multicolumn{2}{c}{Assignment} \\ \hline
(i)   & (ii)  & $\D$   & $({^\perp \D} \cap \rfmod R)^\intercal$               \\
(ii)  & (iii) & $\C$   & $\Spec R \setminus \Ass{(\C \cap \rfmod R)}$                            \\
(iii) & (ii)  & $Y$    & $\{ M \in \rmod R \mid \Ass M \cap Y = \emptyset \}$  \\
(ii)  & (iv)  & $\C$   & $(\T,\F=\C)$                                                 \\
(iv)  & (v)   & $\F$  & $\F \cap \rfmod R$                                   \\
(v)   & (ii)  & $\F'$ & $\varinjlim \F'$                                     \\
\end{tabular}
\end{center}
The assignment in the first line of the table is bijective by Theorem~\ref{thm:cofinite type}. The second, third and fourth line in the table are covered by Theorem~\ref{thm:class-1-cot}. The fifth line follows from Propositions~\ref{prop:Gabriel} and~\ref{prop:torsion_modR}, while the sixth line is implied by Proposition~\ref{prop:1-cotilt}.
\end{proof}

We close this section with an equivalent, but more straightforward, parametrization of $1$-tilting classes in terms of the coassociated prime ideals and divisibility:

\begin{definition}
Let $R$ be a commutative noetherian ring.  

(1) Given an $R$-module $M$, a prime ideal $\p\in\Spec{R}$ is said to be {\it coassociated} to $M$ provided that $\p={\rm Ann}_R(M/U)$ for some submodule $U$ of $M$ \st the module $M/U$ is artinian over $R$. We denote by  $\Coass{M}$ the set of all prime ideals coassociated to $M$. For $\mathcal M\subseteq \rmod R$, we set $\Coass {\mathcal M} = \bigcup_{M \in \mathcal M} \Coass M$.

(2) Given a subset $Y\subseteq\Spec{R}$, an $R$-module $M$ is said to be $Y$-{\it divisible}  if $\p M=M$ for all $\p\in Y$. We denote by ${\mathcal D}(Y)$ the class of all $Y$-divisible $R$-modules.
\end{definition}

\begin{corollary}\label{cor:coass}
Let $R$ be a commutative noetherian ring. Then  the $1$-tilting classes $\mathcal D$ in $\rmod R$ are parametrized by the subsets $Y$ of $\Spec R$ closed under specialization with $\Ass R \cap Y=\emptyset$. The parametrization is given by
\[
\mathcal D \mapsto \Spec R \setminus \Coass{(\mathcal D)} 
%\cap \rfmod R)}
\quad \textrm{and} \quad
Y \mapsto \mathcal \{ M \in \rmod R \mid \Coass M \cap Y = \emptyset \}.
\]
Moreover, \[\Coass \mathcal \{ M \in \rmod R \mid \Coass M \cap Y = \emptyset \} = \Ass \{ M \in \rmod R \mid \Ass M \cap Y = \emptyset \}.\]
\end{corollary}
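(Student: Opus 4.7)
My plan is to derive the corollary from Theorems \ref{thm:class-dim1} and \ref{thm:cofinite type} by first identifying each $1$-tilting class $\mathcal D$ as a class of $Y$-divisible modules, and then linking divisibility to coassociated primes through a short commutative-algebra lemma.

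For the first step, fix a specialization-closed $Y\subseteq\Spec R$ disjoint from $\Ass R$. By Theorem \ref{thm:cofinite type}, the $1$-tilting class $\mathcal D$ associated to the $1$-cotilting class $\mathcal C=\{M\mid \Ass M\cap Y=\emptyset\}$ can be described as $\mathcal D=\{M\mid M\otimes_R U=0 \textrm{ for all } U\in\mathcal U\}$, where $\mathcal U$ is the torsion class in $\rfmod R$ corresponding to $\mathcal C\cap\rfmod R$. By Propositions \ref{prop:Gabriel} and \ref{prop:torsion_modR}, $\mathcal U=\{U\in\rfmod R\mid \Supp U\subseteq Y\}$, and by Remark \ref{rem:takahashi} (using Hochster's observation) each such $U$ admits a finite filtration whose quotients embed into modules of the form $R/\p$ with $\p\in Y$. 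A short induction on the length of this filtration, combined with the fact that $M\otimes_R V=0$ for every module $V$ annihilated by some $\p\in Y$ whenever $\p M=M$, reduces the vanishing condition $M\otimes_R U=0$ to $\p M=M$ for every $\p\in Y$. Hence $\mathcal D=\mathcal D(Y)$.

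Second, I would establish the key lemma: for every $M\in\rmod R$ and every $\p\in\Spec R$, $\p M\neq M$ if and only if there exists $\q\in\Coass M$ with $\q\supseteq\p$. The implication $(\Leftarrow)$ is immediate: if $M/U$ is artinian with $\Ann(M/U)=\q\supseteq\p$, then $\p M\subseteq \q M\subseteq U\subsetneq M$. For $(\Rightarrow)$, the quotient $N=M/\p M$ is nonzero, so $\Hom{R}{N}{E}\neq 0$ for any injective cogenerator $E$, and a standard argument for commutative noetherian rings identifies $\Coass N$ with $\Ass{\Hom{R}{N}{E}}$; in particular $\Coass N\neq\emptyset$. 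Any $\q\in\Coass N$ satisfies $\q\supseteq\Ann N\supseteq\p$, and every artinian quotient of $N$ is also an artinian quotient of $M$, so $\q\in\Coass M$. Applying the lemma together with the specialization-closedness of $Y$ yields
\[
\p M=M \textrm{ for all } \p\in Y \quad\Longleftrightarrow\quad \Coass M\cap Y=\emptyset,
\]
so $\mathcal D(Y)=\{M\mid\Coass M\cap Y=\emptyset\}$, as required for the main parametrization.

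It remains to establish the supplementary equality $\Coass\mathcal D=\Ass\mathcal C$. One inclusion is already at hand: the previous step gives $\Coass\mathcal D\subseteq \Spec R\setminus Y=\Ass\mathcal C$ by Theorem \ref{thm:class-1-cot}. For the reverse, I would use the elementary duality: by Theorem \ref{thm:cofinite type} the assignment $X\mapsto X^{+}$ induces the symmetric equivalence $X\in\mathcal C\Leftrightarrow X^{+}\in\mathcal D$, and for commutative noetherian rings the well-known formula $\Coass X=\Ass X^{+}$ holds for every module $X$. Given $\p\in\Ass\mathcal C$, note that $R/\p\in\mathcal C$ (since $\Ass(R/\p)=\{\p\}$ and $\p\notin Y$), hence $(R/\p)^{+}\in\mathcal D$, and the canonical pure embedding $R/\p\hookrightarrow(R/\p)^{++}$ yields $\p\in\Ass(R/\p)^{++}=\Coass(R/\p)^{+}\subseteq\Coass\mathcal D$. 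The main technical obstacle is justifying the formula $\Coass X=\Ass X^{+}$, which either requires a citation to the literature on (co)associated primes or can be extracted from the key lemma of the previous step by dualising via the injective cogenerator.
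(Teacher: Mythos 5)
Your overall route matches the paper's: identify the $1$-tilting class $\mathcal D$ corresponding to $Y$ with the class $\mathcal D(Y)$ of $Y$-divisible modules (via Theorems~\ref{thm:class-dim1} and \ref{thm:cofinite type}), then translate divisibility into a statement about coassociated primes, and finally read off the ``moreover'' equality. The paper carries out the middle step simply by citing \cite[2.2]{Zoe1}; you instead re-derive the characterization, which is a reasonable variation.

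There is, however, a genuine error in your treatment of the supplementary equality. You invoke the formula $\Coass{X}=\Ass{X^+}$ where $X^+=\Hom{\mathbb Z}{X}{\mathbb Q/\mathbb Z}$ is the character module, and more generally assert that $\Coass{N}=\Ass{\Hom{R}{N}{E}}$ ``for any injective cogenerator $E$''. Both statements are false. Take $R=k[[x]]$ with $k$ a field. Then $R$ is a DVR, the only artinian quotients of $R$ as an $R$-module are the $R/(x^n)$, and the only prime that occurs as their annihilator is $\mathfrak m=(x)$, so $\Coass{R}=\{\mathfrak m\}$. On the other hand one can choose a $\mathbb Z$-linear map $f\colon R\to\mathbb Q/\mathbb Z$ with $f(x^n)\neq 0$ for every $n\geq 0$; since every nonzero ideal of $R$ is some $(x^n)$, this $f$ has $\mathrm{Ann}_R(f)=0$, so $0\in\Ass{R^+}$. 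Hence $\Coass{R}\subsetneq\Ass{R^+}$. The same counterexample works with $E=\mathbb Q\oplus\mathbb Q/\mathbb Z$ over $\mathbb Z$. The correct statement, the one the literature actually provides, uses the minimal injective cogenerator $E_0=E(\bigoplus_{\mathfrak m}R/\mathfrak m)$ and the Matlis-type dual $X^\vee=\Hom{R}{X}{E_0}$: one has $\Coass{X}=\Ass{X^\vee}$, but not the analogue for $X^+$.

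The gap is repairable, and the repaired argument is essentially what the paper tacitly relies on. Replace $(-)^+$ by $(-)^\vee$. For a finitely generated $S$ the duality $\Ext{1}{R}{S}{X^\vee}\cong\Hom{R}{\Tor{1}{R}{S}{X}}{E_0}$ holds, and $E_0$ is a cogenerator, so $X\in\mathcal C\iff X^\vee\in\mathcal D$ still holds with the Matlis-type dual. The embedding $R/\p\hookrightarrow(R/\p)^{\vee\vee}$ is again available because $E_0$ cogenerates, and $\Coass{(R/\p)^\vee}=\Ass{(R/\p)^{\vee\vee}}\ni\p$ now follows from the \emph{correct} formula. Alternatively, one can avoid the formula altogether: fix a maximal ideal $\mathfrak m\supseteq\p$ and let $N=\Hom{R}{R/\p}{E(R/\mathfrak m)}$, the local Matlis dual of $(R/\p)_{\mathfrak m}$. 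Then $N$ is an artinian $R$-module with $\mathrm{Ann}_R N=\p$, so $\p\in\Coass{N}$, and $N$ is a direct summand of $(R/\p)^\vee$ (because $R/\p$ is finitely generated, $\Hom{R}{R/\p}{-}$ commutes with direct sums), which lies in $\mathcal D$; hence $\p\in\Coass{\mathcal D}$. Finally, in your Step 2 you only need the weaker fact that $\Coass{N}\neq\emptyset$ for $N\neq 0$, which is fine once you use $E_0$ rather than an arbitrary injective cogenerator, so that part survives the correction.
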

\begin{proof}
Given  a subset $Y\subseteq\Spec R$ closed under specialization such that $\Ass R\cap Y=\emptyset$, we know from Theorem \ref{thm:class-dim1} and Theorem~\ref{thm:cofinite type} that the   corresponding  1-tilting  class  is 
$\mathcal D=\{ M \in \rmod R \mid {M}\otimes R/\p=0 \textrm{ for all } \p\in Y\}$.
Tensoring the exact sequence $0\to \p\to R\to R/\p\to 0$ by $M$ yields that $M\otimes_R R/\p$ is isomorphic to the cokernel of the embedding $\p M\to M$. So $\mathcal D=\mathcal{D}(Y)$.
Moreover, by \cite[2.2]{Zoe1} a module $M$ is $Y$-divisible if and only if $\Coass{M}\cap V(\p)=\emptyset$ for all $\p\in Y$. Since $Y$ is closed under specialization, this means that $\mathcal D(Y)=\{ M \in \rmod R \mid \Coass M \cap Y = \emptyset \}$. 

So, the assignment $Y\mapsto \mathcal D(Y)$ gives the desired bijection with inverse map
$\mathcal D \mapsto \Spec R \setminus \Coass{(\mathcal D)}$.
\end{proof}

% ==============================================================================
\section{General cotilting classes}
\label{sec:general}

In this section, we classify all $n$-cotilting classes in $\rmod R$ where $R$ is an arbitrary commutative noetherian ring. In the next section, we will apply this classification to characterize all $n$-tilting classes in $\rmod R$.
 
Unfortunately, our methods do not seem to provide much information on the corresponding $n$-(co)tilting modules. Except for special classes of examples in~\cite[Chapters 5, 6 and 8]{GT} and \cite[\S5]{PT}, the only known way to construct, say, a cotilting module for a cotilting class $\C$, seems to be as in the proof of~\cite[Theorem 8.1.9]{GT}, using so-called special $\C$-precovers.

Let us first introduce the sequences of subsets of $\Spec R$ which will parametrize both $n$-tilting and $n$-cotilting classes for given $n \geq 1$.

\begin{definition} \label{def:cx}
In the following $(Y_1, \dots, Y_n)$ will always denote a sequence of subsets of $\Spec{R}$ \st
\begin{ii}
 \item $Y_i$ is closed under specialization for all $1 \leq i \leq n$;
 \item $Y_1 \supseteq Y_2 \supseteq \dots \supseteq Y_n$;
 \item $(\Ass{\Cosyz{i-1}R}) \cap Y_i = \emptyset$ for all $1 \leq i \leq n$;
\end{ii}
and $X_i$ will always denote $\Spec{R}\setminus Y_i$. 
 
For any such  $(Y_1, \dots, Y_n)$ we define the class of modules
\[ \C_{(Y_1, \dots, Y_n)} = \{ M \in \rmod{R} \mid (\Ass{\Cosyz{i-1}M}) \cap Y_{i} = \emptyset \textrm{ for all } 1 \leq i \leq n\} \]
\end{definition}

\begin{remark}\label{rem:cx}
Equivalently by Lemma \ref{lem:Bass}, we can write
\[ \C_{(Y_1, \dots, Y_n)} = \{ M \in \rmod R \mid \mu_{i-1}(\p,M) = 0 \textrm{ for all } 1 \leq i \leq n \textrm{ and } \p \in Y_i \}. \]
For $i \geq 1$, denote by $P_i$ the set of all prime ideals in $R$ of height $i - 1$. Since $P_1 \subseteq \Ass R$, the well-known properties of Bass invariants of finitely generated modules imply that $P_i \subseteq \Ass{\Cosyz{i-1}R} \subseteq X_i$ for all $1 \leq i \leq n$ (see e.g.~\cite[Proposition 9.2.13]{EJ}). 
In other words, (iii) implies (iii$^*$) where
\begin{ii}
\item[(iii$^*$)] $P_i \subseteq X_i$ for all $1 \leq i \leq n$.
\end{ii}
Since Gorenstein rings are characterized by the equality $P_i = \Ass{\Cosyz{i-1}R}$ for each $i \geq 1$ by~\cite[Theorem 18.8]{MATSUMURA}, it follows that (iii) is equivalent to (iii$^*$) when $R$ is Gorenstein. However, for general commutative noetherian rings condition (iii) may be more restrictive. In an extreme case, it may prevent existence of any non-trivial sequences $(Y_1, \dots, Y_n)$ as in the following example.
\end{remark}

\begin{example} \label{ex:non-cx}
Let $k$ be a field, $S = k[x,y]/(x^2, xy)$, and let $(R,\m,k)$ be the localization of $S$ at the maximal ideal $(x,y)$. It is easy to check that the ideal $(x) \subseteq R$ is simple, so $\m \in \Ass{R}$. Hence given any $(Y_1, \dots, Y_n)$ as in Definition~\ref{def:cx}, we necessarily have $Y_i = \emptyset$ for all $1 \leq i \leq n$ and $\C_{(Y_1, \dots, Y_n)} = \rmod R$. In view of the main theorem below, this implies that there are no non-trivial tilting or cotilting classes over this ring $R$.
\end{example}

Our next task is to prove that $\C_{(Y_1, \dots, Y_n)}$ are precisely the $n$-cotilting classes in $\rmod R$. The following definition and lemma will allow us to use induction on $n$.

\begin{definition} \label{def:stepdown}
For any cotilting module $C \in \rmod R$, the corresponding cotilting class $\C = {^\perp C}$ and $j \geq 1$, we define the class
\[ \C_{(j)} = {^\perp \Cosyz {j-1}C} = \{ M \in \rmod R \mid \Ext iRMC = 0 \textrm{ for all } i\geq j \}. \]
\end{definition}
Notice that $\C = \C_{(1)} \subseteq \C_{(2)} \subseteq \dots \subseteq \C_{(n)} \subseteq \C_{(n+1)} = \rmod R$ when $C$ is $n$-cotilting. 

\begin{lemma}\label{lem:stepdown}
Let $\C = {^\perp C}$ be an $n$-cotilting class. Then $\C_{(j)}$ is an $(n-j+1)$-cotilting class for any $j \leq n+1$.
\end{lemma}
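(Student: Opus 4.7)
The plan is to construct an $(n-j+1)$-cotilting module $C^*$ with ${}^\perp C^* = \C_{(j)}$, taking
\[
C^* := \Cosyz{j-1}C \oplus W
\]
for an injective cogenerator $W$ chosen so that $W^\kappa \in \C_{(j)}$ for every cardinal $\kappa$. The identification ${}^\perp C^* = \C_{(j)}$ is immediate since $W$ is injective and hence ${}^\perp W = \rmod R$; the bound $\id R{C^*} \le n-j+1$ comes from truncating the minimal injective coresolution $0\to C\to E_0\to\cdots\to E_n\to 0$ to $0 \to \Cosyz{j-1}C \to E_{j-1} \to \cdots \to E_n \to 0$. Axiom (C3) holds trivially with $0\to W\to W\to 0$, since $W \in \Prod C^*$ is already an injective cogenerator.

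Axiom (C2) reduces, after splitting $\Ext iR{(C^*)^\kappa}{C^*}$ into four components and using injectivity of $W$ on two of them, to controlling the self-Ext of $\Cosyz{j-1}C$ and the cross term. For the self-Ext, two dimension shifts suffice: $\Ext iR{N}{\Cosyz{j-1}C} \cong \Ext{i+j-1}R{N}{C}$ for $i \ge 1$ (from the truncated coresolution $0 \to C \to E_0 \to \cdots \to E_{j-2} \to \Cosyz{j-1}C \to 0$), and dually $\Ext iR{(\Cosyz{j-1}C)^\kappa}{C} \cong \Ext{i-(j-1)}R{C^\kappa}{C}$ for $i \ge j$ (using that the $\kappa$-th power of the same sequence is an injective coresolution of $C^\kappa$, products of injectives being injective over a noetherian ring, and that each $E_k^\kappa$ lies in $\C$). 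Combining,
\[
\Ext iR{(\Cosyz{j-1}C)^\kappa}{\Cosyz{j-1}C} \;\cong\; \Ext iR{C^\kappa}{C} \;=\; 0
\]
for $i \ge 1$ by axiom (C2) for $C$. The cross-term $\Ext iR{W^\kappa}{\Cosyz{j-1}C} \cong \Ext{i+j-1}R{W^\kappa}{C}$ vanishes precisely when $W^\kappa \in \C_{(j)}$.

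Everything thus reduces to constructing $W$. Product closure of $\C_{(j)}$ (so that $W \in \C_{(j)}$ forces $W^\kappa \in \C_{(j)}$) follows from Bazzoni's theorem that the cotilting module $C$ is pure-injective, the standard consequence being that ${}^\perp$-classes of such modules are closed under direct products. Existence of an injective cogenerator $W \in \C_{(j)}$ would use the forthcoming structural results, in particular closure of $\C$ under injective envelopes (Proposition~\ref{prop:k(p)_inside}(ii)) together with $R \in \C$, and the identification of which indecomposable $E(R/\p)$ belong to $\C_{(j)}$; the same structural information justifies the auxiliary claim $E_k \in \C$ used in the (C2) computation. The hardest step is precisely this construction of $W$: while the Ext-theoretic manipulations are routine dimension shifts, assembling enough indecomposable injectives inside $\C_{(j)}$ to cogenerate the entire module category requires the commutative noetherian machinery (Matlis decomposition together with the explicit description of associated primes appearing in cotilting classes) that the paper develops in the surrounding sections.
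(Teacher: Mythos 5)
The overall strategy you propose --- writing an $(n-j+1)$-cotilting module as $\Cosyz{j-1}C$ plus an injective correction term, dimension-shifting for (C2), and invoking Bazzoni's pure-injectivity theorem for product closure --- is reasonable in spirit, and a version of it does appear later in the paper as Corollary~\ref{cor:module_shifting}. However, your specific construction cannot work, and the paper's actual proof of Lemma~\ref{lem:stepdown} is much shorter and more abstract: it notes that $\C_{(j)}$ is closed under products (citing Bazzoni) and then invokes the characterization of $n$-cotilting classes from~\cite[Corollary~8.1.10]{GT} via hereditary cotorsion pairs, with no explicit cotilting module being built.

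The fatal gap is that there is no injective cogenerator $W$ inside $\C_{(j)}$ unless $\C_{(j)} = \rmod R$. Indeed, suppose $W \in \C_{(j)}$ is an injective cogenerator. For every maximal ideal $\m$, the simple module $R/\m$ embeds into $W$, so $E(R/\m)$ is a direct summand of $W$ and hence $E(R/\m) \in \C_{(j)}$. Since $\C_{(j)} = {}^\perp(\Cosyz{j-1}C)$ is closed under products and direct summands, and over a commutative noetherian ring every indecomposable injective $E(R/\p)$ splits off a product of copies of the $E(R/\m)$, the class $\C_{(j)}$ would then contain every injective module. But $\Cosyz{n-j+1}M \in \C_{(j)}$ automatically for every $M$, so running down the minimal injective coresolution $0 \to M \to E_0(M) \to \cdots$ using closure under kernels of epimorphisms between members of $\C_{(j)}$ forces $M \in \C_{(j)}$. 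Hence $\C_{(j)} = \rmod R$, i.e.\ $\Cosyz{j-1}C$ is already injective, and there is nothing to prove. In all nondegenerate cases your $C^*$ cannot be constructed. The paper's Corollary~\ref{cor:module_shifting} avoids this by adding only $\bigoplus_{\p\in X_2}E(R/\p)$ --- which is not a cogenerator --- and verifies (C3) via the cogeneration criterion from~\cite[Lemma~3.12]{BAZ}, not the trivial coresolution $0 \to W \to W \to 0$.

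There is also a circularity you acknowledge but cannot afford: your verification of (C2) and your construction of $W$ both lean on Proposition~\ref{prop:k(p)_inside}(ii) (closure of $\C$ under injective envelopes) and on the assertion that each $E_k^\kappa \in \C$. But Proposition~\ref{prop:k(p)_inside} is proved in the paper by induction that explicitly uses Lemma~\ref{lem:stepdown}. Worse, the assertion $E_k \in \C$ for $k \geq 1$ is simply false in general: in the notation of Theorem~\ref{thm:class-n-cot}, $E_k \in \C$ requires $\Ass{\Cosyz kC} \cap Y_1 = \emptyset$, whereas $C \in \C$ only guarantees $\Ass{\Cosyz kC} \cap Y_{k+1} = \emptyset$, and $Y_1 \supseteq Y_{k+1}$ may be strict. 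So even after the structural results are available, that particular dimension-shift cannot be salvaged as stated.
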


\begin{proof}
The class $\C_{(j)}$ is closed under direct products by~\cite[Lemma 3.4]{BAZ} (see also~\cite[Proposition 8.1.5(a)]{GT}). The rest follows from the characterization of cotilting classes in~\cite[Corollary 8.1.10]{GT}. There, one uses the notion of cotorsion pairs introduced below in Definition~\ref{def:filtr_and_cotorsion}
\end{proof}

\begin{remark} \label{rem:C2}
If $D$ is another module with $\mathcal C = {}^{\perp}D$, then we can also use $D$ to compute $\mathcal C_{(j)}$ for each $j \geq 1$. Indeed, by dimension shifting, for each $M \in \rmod R$ we have $M \in \C_{(j)}$, if and only if $\Omega^{j-1}(M) \in \C$. So $ \mathcal C_{(j)}$ is uniquely determined by the class ${}^{\perp}C={}^{\perp}D.$ 

In particular, performing the construction  from \ref{def:stepdown} for the cotilting class $\mathcal C_{(2)}$, we obtain  $(\mathcal C_{(2)})_{(j)} = \mathcal C_{(j+1)} $ for all $j\ge 1$.
\end{remark}

Now we can state the main classification result of this section.

\begin{theorem} \label{thm:class-n-cot}
Let $R$ be a commutative noetherian ring and $n \ge 1$. Then the assignments
\begin{align*}
\Phi \colon && \C &\longmapsto (\Spec R \setminus \Ass{\C_{(1)}}, \dots,  \Spec R \setminus \Ass{\C_{(n)}}),   \\
\Psi \colon && (Y_1, \dots, Y_n) &\longmapsto \C_{(Y_1, \dots, Y_n)}
\end{align*}
give mutually inverse bijections between the sequences of subsets $(Y_1, \dots, Y_n)$ of $\Spec{R}$ satisfying the three conditions of Definition \ref{def:cx}, and the $n$-cotilting classes $\C$ in $\rmod{R}$.
\end{theorem}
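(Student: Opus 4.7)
My plan is to prove the bijection by induction on $n$, with base case $n=1$ given by Theorem~\ref{thm:class-1-cot}. The key leverage for the inductive step is Lemma~\ref{lem:stepdown} together with Remark~\ref{rem:C2}: if $\C$ is $n$-cotilting then $\C_{(2)}$ is $(n-1)$-cotilting and $(\C_{(2)})_{(j)}=\C_{(j+1)}$, so the inductive hypothesis applied to $\C_{(2)}$ handles the tail $(Y_2,\dots,Y_n)$ while the 1-cotilting machinery takes care of the top layer $Y_1=\Spec R\setminus\Ass\C$.

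First I would verify that $\Phi$ is well-defined. Conditions (i) and (ii) of Definition~\ref{def:cx} are essentially formal: the chain $\C=\C_{(1)}\subseteq\C_{(2)}\subseteq\dots\subseteq\C_{(n+1)}=\rmod R$ gives $Y_1\supseteq\dots\supseteq Y_n$, while specialization-closedness of each $Y_i$ follows from the closure of cotilting classes under injective envelopes (Proposition~\ref{prop:k(p)_inside}(ii)) by the same argument used in Theorem~\ref{thm:class-1-cot}. For condition (iii) I would induct on $i$ along the minimal injective coresolution of $R$: since $R\in\C_{(1)}$, closure of $\C_{(i)}$ under injective envelopes combined with a dimension shift in $0\to\Cosyz{i-1}R\to E_{i-1}(R)\to\Cosyz i R\to 0$ forces $\Cosyz{i-1}R\in\C_{(i)}$, hence $E_{i-1}(R)\in\C_{(i)}$, and therefore $\Ass\Cosyz{i-1}R=\Ass E_{i-1}(R)\subseteq\Ass\C_{(i)}=X_i$ by Lemma~\ref{lem:Bass}.

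Next, for $\Psi\circ\Phi=\mathrm{id}$, the inclusion $\C\subseteq\C_{(\Phi(\C))}$ runs by exactly the same injective-envelope/dimension-shift induction applied to $M\in\C$: it forces $\Cosyz{i-1}M\in\C_{(i)}$, whence the required associated-prime conditions hold. The converse inclusion is the delicate part. My plan is a reverse induction on $i$ showing $\Cosyz iM\in\C_{(i+1)}$ for $0\le i\le n$; the case $i=n$ is trivial since $\C_{(n+1)}=\rmod R$. For the step from $i+1$ to $i$, the hypothesis $\Ass\Cosyz iM\cap Y_{i+1}=\emptyset$ says every indecomposable summand $E(R/\p)$ of $E_i(M)$ satisfies $\p\in X_{i+1}=\Ass\C_{(i+1)}$, so $E(R/\p)\in\C_{(i+1)}$ by injective-envelope closure; the decomposition $E_i(M)=\bigoplus_\alpha E(R/\p_\alpha)$ then sits as a pure submodule of the product $\prod_\alpha E(R/\p_\alpha)\in\C_{(i+1)}$, and pure-injectivity of cotilting modules (Bazzoni) ensures $\C_{(i+1)}$ is closed under pure submodules, giving $E_i(M)\in\C_{(i+1)}$. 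A dimension shift in $0\to\Cosyz iM\to E_i(M)\to\Cosyz{i+1}M\to 0$ then closes the induction and yields $M\in\C_{(1)}=\C$.

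The main obstacle I anticipate is the well-definedness of $\Psi$, i.e., proving that every $\C_{(Y_1,\dots,Y_n)}$ is actually an $n$-cotilting class (so that $\Phi\circ\Psi=\mathrm{id}$ makes sense and can be checked). My plan is a recursive construction of a witnessing cotilting module: Theorems~\ref{thm:class-1-cot} and~\ref{thm:cofinite type} supply a 1-cotilting module corresponding to $Y_n$, the inductive hypothesis supplies an $(n-1)$-cotilting module for $(Y_2,\dots,Y_n)$, and Corollary~\ref{cor:module_shifting}---the minimal cosyzygy of an $n$-cotilting module is $(n-1)$-cotilting up to an injective summand---indicates how to lift an $(n-1)$-cotilting witness back to an $n$-cotilting one by splicing it with an injective piece carrying the extra $Y_1$-information. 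Once $\C_{(Y_1,\dots,Y_n)}$ is known to be cotilting, the identity $\Phi(\C_{(Y_1,\dots,Y_n)})=(Y_1,\dots,Y_n)$ is verified by checking $E(R/\p)\in\C_{(Y_1,\dots,Y_n)}$ for each $\p\in X_1$ (which is immediate from the definition since $\Cosyz iE(R/\p)=0$ for $i\ge 1$), and the analogous statement at each level $\C_{(i)}$ through the shifted sequences. The bookkeeping around the injective summands in the recursive construction and the verification of axiom (C3) for the produced cotilting module is where the bulk of the technical work lies.
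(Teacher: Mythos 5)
Your strategy for showing $\Phi$ is well-defined and that $\Psi\circ\Phi=\mathrm{id}$ is broadly sound and parallels what the paper does (Lemma~\ref{lem:generaliz}, Proposition~\ref{prop:k(p)_inside}), though a small caveat: specialization-closedness of $Y_i$ does not follow from injective-envelope closure alone --- one also needs that $E(R/\q)$ for $\q\subseteq\p$ is a summand of a \emph{product} $E(R/\p)^I$, so the argument must also invoke closure under products and summands, as in Lemma~\ref{lem:generaliz}. That is a fixable omission.

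The genuine gap is exactly where you suspected it: the well-definedness of $\Psi$. Your plan is to build a witnessing $n$-cotilting module recursively, invoking Corollary~\ref{cor:module_shifting} to ``lift an $(n-1)$-cotilting witness back to an $n$-cotilting one by splicing it with an injective piece.'' This fails on two counts. First, it is circular: Corollary~\ref{cor:module_shifting} (and Corollary~\ref{cor:cx_shifting} it relies on) is derived \emph{from} Theorem~\ref{thm:class-n-cot}, using the already-established fact that $\C_{(Y_1,\dots,Y_n)}$ is cotilting. Second, even granting the statement, it only runs in the forward direction: it tells you the minimal cosyzygy of an $n$-cotilting module is $(n-1)$-cotilting up to an injective summand, but it supplies no mechanism for inverting the cosyzygy. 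Given an $(n-1)$-cotilting module $D$ for the class $\C_{(Y_2,\dots,Y_n)}$, there is no canonical extension problem whose solution produces a module $C$ with $\Cosyz{}C\oplus(\text{injective})\cong D$, let alone one that also satisfies the cotilting axioms (C1)--(C3). As the paper notes at the start of Section~3, the only known general way to produce a cotilting module for a given cotilting class goes through special precovers, which is an entirely separate machine.

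The paper avoids this obstacle altogether: it never constructs a cotilting module. Instead, Proposition~\ref{prop:char_cotilting} gives an intrinsic characterization of $n$-cotilting \emph{classes} (definable, resolving, and containing all $n$-th syzygy modules), and Proposition~\ref{prop:cx_cotilt} checks these conditions directly for $\C_{(Y_1,\dots,Y_n)}$ --- closure under products from the torsion-theoretic description of specialization-closed sets, and closure under pure submodules and direct limits by localizing at each $\p\in Y_i$ and observing that $\{N\mid\Ext{i}{R_\p}{k(\p)}{N}=0\}$ is a definable class. This class-level, characterization-based route is where the real work is, and your proposal as written does not yet contain a substitute for it.
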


We will prove the theorem in several steps. We start by proving that the map $\Psi$ is injective, but we postpone the proof of the fact that $\Psi$ is well-defined in the sense that each class of the form $\C_{(Y_1, \dots, Y_n)}$ is cotilting.

\begin{lemma} \label{lem:cx_unicity}
Let $(Y_1, \dots, Y_n)$ and $(Y'_1, \dots, Y'_n)$ be two sequences as in Definition~\ref{def:cx}. Then $\C_{(Y_1, \dots, Y_n)} = \C_{(Y'_1, \dots, Y'_n)}$ \iff $(Y_1, \dots, Y_n) = (Y'_1, \dots, Y'_n)$.
\end{lemma}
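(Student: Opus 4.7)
The strategy is to reconstruct each $Y_i$ directly from the class $\C = \C_{(Y_1,\dots,Y_n)}$, via the formula
\[
X_i \;=\; \Spec R \setminus Y_i \;=\; \bigcup_{M \in \C} \Ass{\Cosyz{i-1}M} \qquad (1 \leq i \leq n).
\]
Once this is proved for arbitrary $(Y_1,\dots,Y_n)$ satisfying (i)--(iii) of Definition~\ref{def:cx}, injectivity of $\Psi$ is immediate: if $\C_{(Y_1,\dots,Y_n)} = \C_{(Y'_1,\dots,Y'_n)}$ then the right-hand side above is the same for both sequences, hence so is $X_i$, hence so is $Y_i$. The inclusion ``$\supseteq$'' is trivial from Definition~\ref{def:cx} (any $\p \in \Ass{\Cosyz{i-1}M}$ with $M \in \C$ must lie outside $Y_i$).

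For the reverse inclusion ``$\subseteq$'' the plan is to produce, for each $\p \in X_i$, a concrete test module $M_{\p,i} \in \C$ with $\p \in \Ass{\Cosyz{i-1}M_{\p,i}}$. The natural candidate is a minimal $(i-1)$-th syzygy of the residue field, namely $M_{\p,i} := \Syz{i-1}{k(\p)}$. Membership of $M_{\p,i}$ in $\C$ will follow by bounding its cosyzygies via Corollary~\ref{cor:syzygies} (applied with $\ell=i-1$):
\[
\Ass{\Cosyz{j-1}M_{\p,i}} \;\subseteq\; \Ass{\Cosyz{j-i}{k(\p)}} \;\cup\; \bigcup_{l=0}^{i-2}\Ass{\Cosyz{j-1-l}R}.
\]
For $j < i$ the first term vanishes by the sign convention $\Cosyz{<0}{-}=0$; for $j \ge i$ it sits inside $\{\p\}$ by Lemma~\ref{lem:kp}(ii), and is disjoint from $Y_j$ because $\p \notin Y_i \supseteq Y_j$. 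For the second term, condition~(iii) applied at index $k = j-l$ reads $\Ass{\Cosyz{j-l-1}R}\cap Y_{j-l} = \emptyset$, and since $Y_j \subseteq Y_{j-l}$ by monotonicity (ii), each contribution is disjoint from $Y_j$ as required.

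The main obstacle will be the non-vanishing step: verifying that $\p \in \Ass{\Cosyz{i-1}M_{\p,i}}$, equivalently $\mu_{i-1}(\p, \Syz{i-1}{k(\p)}) \neq 0$. By Lemma~\ref{lem:Bass} this is a question of showing
\[
\Ext{i-1}{R_\p}{k(\p)}{(\Syz{i-1}{k(\p)})_\p} \;\neq\; 0,
\]
which after dimension shifting along the minimal free resolution of $k(\p)$ over $R_\p$ reduces to non-vanishing of certain $\Ext{\ast}{R_\p}{k(\p)}{k(\p)}$ and $\Ext{\ast}{R_\p}{k(\p)}{R_\p}$. The non-vanishing is automatic when $R_\p$ is not regular (since then $k(\p)$ has infinite projective dimension), but for a regular localization $R_\p$ of small height it need not hold, and the naive candidate must be replaced or supplemented. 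I expect the repair to consist of substituting $M_{\p,i}$ by a suitable direct sum or extension built from syzygies of $k(\p)$ together with summands of $E(R/\p)$, so as to ensure that $E(R/\p)$ appears as a direct summand of $E_{i-1}(M_{\p,i})$ while retaining the bound on associated primes computed above. This is where the detailed structure of minimal injective coresolutions over commutative noetherian rings, together with Lemma~\ref{lem:kp}, has to be used most carefully.
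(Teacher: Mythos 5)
Your proposed reconstruction formula
\[
X_i \;=\; \bigcup_{M \in \C} \Ass{\Cosyz{i-1}M}
\]
is false, and the obstruction is more fundamental than the ``non-vanishing step'' you flag at the end. Take $R$ to be any noetherian domain and $\p = (0)$. Then $R_\p = k(\p)$ is a field, so $\Ext{i-1}{R_\p}{k(\p)}{N} = 0$ for every $R_\p$-module $N$ and every $i \ge 2$; by Lemma~\ref{lem:Bass} this means $\mu_{i-1}(\p, M) = 0$ for \emph{every} $R$-module $M$, so $(0)$ never belongs to $\Ass{\Cosyz{i-1}M}$ once $i \ge 2$. On the other hand $(0) \in \Ass R$ forces $(0) \notin Y_1 \supseteq Y_i$ by conditions (ii)--(iii) of Definition~\ref{def:cx}, so $(0) \in X_i$ for every $i$. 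Thus the inclusion ``$\subseteq$'' fails already for $n=2$. More generally, any $\p$ with $R_\p$ regular of Krull dimension $h < i-1$ lies in $X_i$ automatically (combine condition (iii$^*$) of Remark~\ref{rem:cx} with the monotonicity $X_{h+1}\subseteq X_i$) yet satisfies $\mu_{i-1}(\p,M)=0$ for all $M$ because $\Ext{i-1}{R_\p}{k(\p)}{-}$ is the zero functor. No replacement of the test module $M_{\p,i}$ can fix this: the class $\C$ genuinely does not determine $X_i$ via cosyzygies of its own members in the way you propose.

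The paper's proof avoids reconstructing $X_i$ intrinsically. Given two distinct sequences, it picks $\p$ and $i$ with, say, $\p \in Y'_i \setminus Y_i$, and then uses that $\p$ lies \emph{inside} $Y'_i$: conditions (ii) and (iii) for the sequence $Y'$ immediately give $\mu_j(\p,R)=0$, equivalently $\Ext{j}{R_\p}{k(\p)}{R_\p}=0$, for all $0 \le j \le i-1$. This is exactly the vanishing you correctly identified as needed for the dimension shift
\[
\Ext{i-1}{R_\p}{k(\p)}{(\Syz{i-1}{k(\p)})_\p} \;\cong\; \Hom{R_\p}{k(\p)}{k(\p)} \;\ne\; 0,
\]
and it comes for free once you approach $\p$ through the sequence in which it lies in $Y'_i$, rather than through the one in which it merely lies outside $Y_i$ (the latter carries no control over the Bass numbers $\mu_j(\p,R)$). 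Your membership computation showing $\Syz{i-1}{k(\p)} \in \C_{(Y_1,\dots,Y_n)}$ when $\p \notin Y_i$ is correct and is precisely the paper's; the missing idea is to compare the two sequences directly and let the larger set $Y'_i$ supply the homological vanishing hypothesis.
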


\begin{proof}
We only have to prove that $\C_{(Y_1, \dots, Y_n)} \ne \C_{(Y'_1, \dots, Y'_n)}$ whenever $(Y_1, \dots, Y_n) \ne (Y'_1, \dots, Y'_n)$. Thus suppose that there are $1 \le i \le n$ and $\p \in \Spec R$ \st $\p \in Y'_i \setminus Y_i$. By conditions~(ii) and (iii) of Definition~\ref{def:cx} for $Y'_i$, this implies
\[ \mu_{j}(\p,R) = \dim_{k(\p)} \Ext{j}{R_\p}{k(\p)}{R_\p} = 0 \qquad \textrm{ for all } 0 \le j \le i - 1 . \]
Denoting by $M$ an $(i-1)$-th syzygy module of $k(\p)$, we claim that $M \in \C_{(Y_1, \dots, Y_n)} \setminus \C_{(Y'_1, \dots, Y'_n)}$. Indeed, by Lemma \ref{lem:kp}(2) the only possible associated prime of a cosyzygy of $k(\p)$ is $\p$, so Corollary~\ref{cor:syzygies} and Remark \ref{rem:syzygy_indep} give us for each $0 \le j \le n - 1$:
\[
\Ass{\Cosyz{j}{M}} \subseteq
\begin{cases}
\bigcup_{k=0}^{i-2} \Ass{\Cosyz{j-k}{R}}               & \textrm{for } j < i-1    \\
\bigcup_{k=0}^{i-2} \Ass{\Cosyz{j-k}{R}} \cup \{\p\}   & \textrm{for } j \ge i-1. \\
\end{cases}
\]
Using Definition~\ref{def:cx}, one easily checks that $M \in \C_{(Y_1, \dots, Y_n)}$.

On the other hand, a straightforward dimension shifting argument based on the fact that $\Ext{j}{R_\p}{k(\p)}{R_\p} = 0$ for all $0 \le j \le i-1$ proved above yields
\[ \Ext{i-1}{R_\p}{k(\p)}{M_\p} \cong \Hom{R_\p}{k(\p)}{k(\p)} \ne 0, \]
so $\mu_{i-1}(\p, M) \not= 0$ by Lemma~\ref{lem:Bass} and $M \notin \C_{(Y'_1, \dots, Y'_n)}$.
\end{proof}

Next, we observe a consequence of the fact that every cotilting class is closed under taking direct limits (see~\cite[Theorem 8.1.7]{GT}).

\begin{lemma} \label{lem:tensoring}
Let $R$ be a commutative ring. Let $\C$ be a cotilting class in $\rmod{R}$, and let $M \in \C$ and $F$ be a flat $R$-module. Then $M \otimes_R F \in \C$. In particular, $M_\p \in \C$ for any $M \in \C$ and $\p \in \Spec{R}$.
\end{lemma}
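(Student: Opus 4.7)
The plan is to invoke Lazard's theorem to write $F$ as a directed colimit of finitely generated free modules, reduce to showing that finite direct sums of $M$ lie in $\C$, and then close up by direct limits.

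More precisely, first I would recall that a module over a commutative ring is flat if and only if it is a direct limit of finitely generated free modules (Lazard). So write $F = \varinjlim_{i \in I} R^{n_i}$ for some directed system. Tensoring with $M$ (which commutes with direct limits) gives
\[ M \otimes_R F \;\cong\; \varinjlim_{i \in I} (M \otimes_R R^{n_i}) \;\cong\; \varinjlim_{i \in I} M^{n_i}, \]
where the transition maps are those induced by the structure maps of the system defining $F$.

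Next I would use two standard closure properties of the cotilting class $\C$. Since $\C = {}^{\perp}C$ for some cotilting module $C$, and $\Ext{i}{R}{-}{C}$ sends direct sums in the first variable to direct products, $\C$ is closed under arbitrary direct products; in particular each finite direct sum $M^{n_i}$ lies in $\C$. Second, by \cite[Theorem~8.1.7]{GT} (quoted in the paragraph preceding the statement) every cotilting class in $\rmod R$ is closed under direct limits. Applying this to the system $\{M^{n_i}\}_{i\in I}$ yields $M \otimes_R F \in \C$, as desired.

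For the final sentence, it suffices to observe that for any $\p \in \Spec R$ the localization $R_\p$ is a flat $R$-module, and $M_\p \cong M \otimes_R R_\p$, so the first part applies. There is no real obstacle here: the only point to watch is that the argument makes essential use of both the product-closure (for the finitely generated free stages) and the direct-limit-closure (to pass to the colimit), both of which are already available from the cited results.
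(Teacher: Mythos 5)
Your proof is correct and follows exactly the same route as the paper's: Lazard's theorem to write $F$ as a direct limit of finitely generated free modules, then closure of the cotilting class under direct limits (via \cite[Theorem~8.1.7]{GT}), and finally the observation that $R_\p$ is flat. The only minor difference is that you spell out the (easy) product-closure step justifying $M^{n_i} \in \C$, which the paper leaves implicit.
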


\begin{proof}
By Lazard's theorem (see e.g.~\cite[Corollary 1.2.16]{GT}), we can express $F$ as a direct limit $F = \varinjlim_{i \in I} F_i$ of finitely generated free modules $F_i$. In particular, $M \otimes_R F_i \cong M^{n_i} \in \C$ for each $i \in I$. Since $\C$ is closed under taking direct limits by~\cite[Theorem 8.1.7]{GT}, we have $M \otimes_R F \cong \varinjlim_{i \in I} M \otimes_R F_i \in \C$. The last assertion follows since $M_\p \cong M \otimes_R R_\p$ and $R_\p$ is flat as an $R$-module. 
\end{proof}

The next observation gives us a relation between $\C$ and $\C_{(2)}$ (cf.\ Definition~\ref{def:stepdown} and Remark~\ref{rem:C2}).

\begin{lemma} \label{lem:cotilt_shift}
Let $\C$ be a cotilting class and
\[ 0 \la K \la L \la M \la 0 \]
be a short exact sequence \st $L \in \C$. Then $K \in \C$ \iff $M \in \C_{(2)}$.
\end{lemma}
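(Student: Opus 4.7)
The plan is to apply the long exact Ext sequence associated to $0 \to K \to L \to M \to 0$ and use the vanishing afforded by $L \in \C$ to match up the relevant groups.

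Fix a cotilting module $C$ with $\C = {}^\perp C$, so that by definition $\C_{(2)} = \{ N \in \rmod R \mid \Ext{i}{R}{N}{C} = 0 \text{ for all } i \ge 2\}$ (see Definition~\ref{def:stepdown} and Remark~\ref{rem:C2}). Applying $\Hom R{-}{C}$ to the given short exact sequence yields, for every $i \ge 1$, an exact piece
\[ \Ext{i}{R}{L}{C} \la \Ext{i}{R}{K}{C} \la \Ext{i+1}{R}{M}{C} \la \Ext{i+1}{R}{L}{C}. \]
Since $L \in \C$, both outer terms vanish for all $i \ge 1$, producing natural isomorphisms $\Ext{i}{R}{K}{C} \cong \Ext{i+1}{R}{M}{C}$ for each $i \ge 1$.

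From this the equivalence is immediate: $K \in \C$ \iff $\Ext{i}{R}{K}{C} = 0$ for every $i \ge 1$, which by the isomorphism is equivalent to $\Ext{j}{R}{M}{C} = 0$ for every $j \ge 2$, i.e.\ $M \in \C_{(2)}$. There is no real obstacle here; the only thing worth noting is that the argument uses a specific cotilting module $C$ representing $\C$, but by Remark~\ref{rem:C2} the class $\C_{(2)}$ depends only on $\C$, so the conclusion is independent of the choice of $C$.
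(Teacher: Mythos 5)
Your proof is correct and takes essentially the same approach as the paper: both fix a cotilting module $C$ representing $\C$, use the long exact sequence for $\Hom_R(-,C)$ applied to $0 \to K \to L \to M \to 0$ together with $\Ext{i}{R}{L}{C} = 0$ for $i \ge 1$ to obtain $\Ext{i}{R}{K}{C} \cong \Ext{i+1}{R}{M}{C}$, and then read off the equivalence. Your write-up simply spells out the dimension shifting and the independence from the choice of $C$ a bit more explicitly than the paper does.
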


\begin{proof}
Let $C$ be a cotilting module for $\C$. Then $\Ext iRKC \cong \Ext {i+1}RMC$ for each $i \ge 1$. The conclusion follows directly from the definition.
\end{proof}

Now we prove another part of Theorem~\ref{thm:class-n-cot}, namely that $\Psi \circ \Phi = id$. Again, we postpone for the moment the proof that the map $\Phi$ is well defined in the sense that the sequence $(\Spec R \setminus \Ass{\C_{(1)}}, \dots, \Spec R \setminus \Ass{\C_{(n)}})$ of subsets of $\Spec{R}$ satisfies for each $n$-cotilting class $\C$ the conditions in Definition~\ref{def:cx}.

\begin{proposition} \label{prop:k(p)_inside}
Let $n \ge 1$ and $\C$ be an $n$-cotilting class. Then  the following hold:
\begin{ii}
 \item If $\p \in \Ass{\C}$, then $k(\p) \in \C$.
 \item $\C$ is closed under taking injective envelopes.
 \item Define $X_i = \Ass{\C_{(i)}}$ and $Y_i = \Spec R \setminus X_i$ for $1 \le i \le n$. Then
 \[ \qquad \C = \{ M \in \rmod{R} \mid \Ass{E_{i-1}(M)} \cap Y_{i} = \emptyset \textrm{ for all } 1 \leq i \leq n\}. \]
\end{ii}
\end{proposition}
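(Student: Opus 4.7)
My plan is to prove (i)--(iii) simultaneously by induction on $n$. The base case $n=1$ is quick: by Theorem~\ref{thm:class-1-cot}, $\C$ is the torsion-free class of a faithful hereditary torsion pair, which together with Lemma~\ref{lem:tensoring} delivers (i) (since $R/\p\hookrightarrow M\in\C$ forces $R/\p\in\C$ and hence $k(\p)=(R/\p)_{\p}\in\C$), Definition~\ref{def:tor_pair} gives (ii), and (iii) is contained in the same theorem after noting $\Ass M=\Ass{E_0(M)}$ via Lemma~\ref{lem:Bass}. In the inductive step, Lemma~\ref{lem:stepdown} and Remark~\ref{rem:C2} let me apply the inductive hypothesis to each $(n-j)$-cotilting class $\C_{(j+1)}$.

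The crux is (i) in the inductive step. Given $\p\in\Ass\C$ with $R/\p\hookrightarrow M\in\C$, Lemma~\ref{lem:tensoring} yields $M_{\p}\in\C$ together with $k(\p)\hookrightarrow M_{\p}$. The inductive (i) for $\C_{(2)}$ gives $k(\p)\in\C_{(2)}$, so $\Ext iR{k(\p)}C=0$ for $i\ge 2$ and only $\Ext 1R{k(\p)}C=0$ remains. Setting $Q=M_{\p}/k(\p)$, the long exact $\Ext$-sequence of $0\to k(\p)\to M_{\p}\to Q\to 0$ combined with $M_{\p}\in\C$ produces $\Ext 1R{k(\p)}C\cong\Ext 2RQC$ (the isomorphism underlying Lemma~\ref{lem:cotilt_shift}), reducing the task to $Q\in\C_{(2)}$. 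I will verify this using the inductive (iii) for $\C_{(2)}$: Lemma~\ref{lem:ses}(iii) bounds $\Ass{\Cosyz jQ}\subseteq\Ass{\Cosyz j{M_{\p}}}\cup\Ass{\Cosyz{j+1}{k(\p)}}$; Lemma~\ref{lem:kp}(2) places the last set in $\{\p\}$; and the inductive (iii) applied to $M_{\p}\in\C_{(2)}$ puts $\Ass{\Cosyz j{M_{\p}}}$ in $X_{j+2}$. The stray $\p$ is absorbed because $\p\in X_1\subseteq X_{j+2}$, owing to $\C_{(i)}\subseteq\C_{(i+1)}$.

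Once (i) is established, (ii) follows from the Matlis decomposition $E_0(M)=\bigoplus_{\q\in\Ass{M}}E(R/\q)^{(\mu_0(\q,M))}$: each $E(R/\q)$ with $\q\in\Ass\C$ is $\{k(\q)\}$-filtered by Lemma~\ref{lem:kp}(1) with $k(\q)\in\C$, so Eklof's lemma plus closure of $\C$ under arbitrary direct sums (direct limits of finite sums, via \cite[Theorem~8.1.7]{GT}) gives $E_0(M)\in\C$. For (iii), the inclusion $(\subseteq)$ is a dimension shift: iterating (ii) for $\C_{(k+1)}$ and Lemma~\ref{lem:cotilt_shift} yields $\Cosyz{k}M\in\C_{(k+1)}$, hence $\Ass{E_k(M)}\subseteq X_{k+1}$. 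For $(\supseteq)$, the same Matlis-plus-Eklof argument extracts $E_{i-1}(M)\in\C_{(i)}$ from $\Ass{E_{i-1}(M)}\subseteq X_i$, and an $n$-fold dimension shift produces $\Ext iRMC\cong\Ext{i+n}R{\Cosyz nM}C=0$ for $i\ge 1$, since $C$ has injective dimension at most $n$. The main obstacle is precisely (i) in the inductive step, since $\C$ need not be closed under submodules when $n\ge 2$; the bootstrap through $\C_{(2)}$ together with the Bass-number bookkeeping on $Q$ is what drives the argument.
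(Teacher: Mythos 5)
Your proof is correct and follows essentially the same route as the paper: simultaneous induction on~$n$, with the crux of~(i) being the localization $M_{\p}\in\C$ together with the reduction $k(\p)\in\C\Leftrightarrow M_{\p}/k(\p)\in\C_{(2)}$ from Lemma~\ref{lem:cotilt_shift}, the Bass-number bookkeeping through the $(n-1)$-cotilting class $\C_{(2)}$ absorbing the stray $\p$ via $X_1\subseteq X_{j+2}$, and then~(ii),~(iii) obtained from the Matlis decomposition, the $\{k(\q)\}$-filtration of $E(R/\q)$, and dimension shifting. The paper organizes this slightly differently — it proves~(i) and~(iii) by joint induction with~(ii) derived separately from~(i), invokes the inductive~(ii) rather than~(iii) to bound $\Ass{\Cosyz iM_{\p}}$, and phrases~(iii) as a one-step recursion ``$M\in\C$ iff $E(M)\in\C$ and $\Cosyz{}M\in\C_{(2)}$'' rather than your explicit $n$-fold shift — but these are interchangeable bookkeeping choices carrying the same content.
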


\begin{proof}
We will prove the statement by induction on $n$. More precisely, we will first show that (i) and (iii) hold for $n=1$, and that (i)~$\Rightarrow$~(ii)
%~$\Rightarrow$~(iii) 
for each $n \ge 1$.  Then we will prove the statements (i) and (iii) simultaneously by induction.

The proof of (i) for $n=1$: Suppose that $\p \in \Ass{\C}$. That is, $R/\p \subseteq M$ for some $M \in \C$. Lemma~\ref{lem:tensoring} then gives $k(\p) \subseteq M_{\p} \in \C$. By Theorem \ref{thm:class-1-cot}, $\C$ is a torsion-free class,  so $\C$ is closed under submodules and $k(\p) \in \C$.

(iii) for $n = 1$ is a straightforward consequence of Theorem \ref{thm:class-1-cot}.

(i)~$\Rightarrow$~(ii) for each $n \geq 1$:  By Lemma~\ref{lem:Bass} for $i=0$, for each $M \in \rmod R$, $E(M)$ is a direct sum of copies of the modules $E(R/\p)$ for $\p \in \Ass M$. So if $\p \in \Ass \C$, then $k(\p) \in \C$ by (i), and since $E(R/\p)$ is $k(\p)$-filtered by Lemma \ref{lem:kp}, also $E(R/\p) \in \C$. Thus $\C$ is closed under injective envelopes.

(i) for $n>1$: Suppose that $\p \in \Ass{\C}$. As above, we find $M \in \C$ \st $k(\p) \subseteq M$. To show that $k(\p) \in \C$, in view of Lemma~\ref{lem:cotilt_shift}, it suffices to prove that $M/k(\p) \in \C_{(2)}$. To this end, we know from Lemma \ref{lem:kp} that $\Ass{\Cosyz i{k(\p)}} \subseteq \{\p\}$ for each $i \ge 0$. Then Lemma~\ref{lem:ses}(iii) implies that
\[ \Ass{\Cosyz i{M/k(\p)}} \subseteq \Ass{\Cosyz iM} \cup \{\p\} \qquad \textrm{for each } i \ge 0. \]
However, $M \in \C \subseteq \C_{(2)}$, so condition (ii) for the ($n-1$)-cotilting class $\C_{(2)}$ and Lemma \ref{lem:cotilt_shift} give $\Ass{\Cosyz{1}M} \subseteq \Ass{\C_{(3)}}$, and similarly $\Ass{\Cosyz{i}M} \subseteq \Ass{\C_{(i+2)}}$ for all $0 \le i \le n-2$. Clearly $\p \in\Ass{\C}\subseteq \Ass{\C_{(i+2)}}$ for all $0 \le i \le n-2$ since $\C \subseteq \C_{(i+2)}$. Thus $\Ass{E_{i-2}({M/k(\p)})} \subseteq \Ass{\C_{(i)}} = \Ass{(\C_{(2)})_{(i-1)}}$ for all $2 \leq i \leq n$. Condition (iii) for the ($n-1$)-cotilting class $\C_{(2)}$ then gives $M/k(\p) \in \C_{(2)}$, so $k(\p) \in \C$ by Lemma \ref{lem:cotilt_shift}.

(iii) for $n > 1$: Using conditions (i) and (ii) for $n$ and Lemma \ref{lem:kp}, we obtain the implications 
\[
E(R/\p) \in \C \quad \Rightarrow \quad
\p \in \Ass\C \quad \Rightarrow \quad
k(\p) \in \C \quad \Rightarrow \quad
E(R/\p) \in \C. \]

Also, condition (ii) for $n$ and Lemma~\ref{lem:cotilt_shift} imply that a module $M$ belongs to $\C$, \iff $E(M) \in \C$ and $\Cosyz{}M \in \C_{(2)}$.
Since for each module $M$, the indecomposable direct summands of $E(M)$ are precisely the $E(R/\p)$ for $\p \in \Ass M$, we infer that $E(M) \in \C$ \iff $\Ass M \subseteq \Ass{\C} = X_1$.

We now apply condition (iii) to the ($n-1$)-cotilting class $\C_{(2)}$. By Remark \ref{rem:C2} we obtain
\[ \C_{(2)} = \{ L \in \rmod{R} \mid \Ass{E_{i-2}(L)} \subseteq X_{i} \; \mbox{for all} \; 2 \leq i \leq n\}. \]
In particular, $\Cosyz{}M \in \C_{(2)}$ \iff $\Ass{E_{i-1}(M)} \subseteq X_{i}$ for all $2 \leq i \leq n$, and the conclusion follows.
\end{proof}

Let us summarize what has been done so far. We have proved that the assignment $\Psi$ in Theorem~\ref{thm:class-n-cot} is injective, and that $\Psi \circ \Phi = id$. We are left to show that each sequence of subsets in the image of $\Phi$ meets the requirements of Definition~\ref{def:cx}, and that each class obtained by an application of $\Psi$ is actually cotilting. We start with the former statement, which is easier.  

\begin{lemma}\label{lem:generaliz}
Let $n \ge 1$ and $\C$ be an $n$-cotilting class. If we put $X_i = \Ass{\C_{(i)}}$ and $Y_i = \Spec R \setminus X_i$ for $1 \le i \le n$, then the sequence $(Y_1, \dots, Y_n)$ of subsets of $\Spec{R}$ satisfies conditions \emph{(i)-(iii)} in Definition~\ref{def:cx}.
\end{lemma}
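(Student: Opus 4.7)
The plan is to handle the three conditions separately, with (ii) and (iii) being immediate while (i) requires an inductive argument of the same flavor as the proof of Proposition~\ref{prop:k(p)_inside}(i).

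For (ii), the monotonicity is built into the definitions: $\C = \C_{(1)} \subseteq \C_{(2)} \subseteq \cdots \subseteq \C_{(n)}$ since the vanishing conditions defining $\C_{(j)}$ become weaker as $j$ grows. Thus $X_1 \subseteq X_2 \subseteq \cdots \subseteq X_n$ and so $Y_1 \supseteq Y_2 \supseteq \cdots \supseteq Y_n$. For (iii), I would show the cleaner statement that $\Cosyz{i-1}R \in \C_{(i)}$, which gives $\Ass{\Cosyz{i-1}R} \subseteq \Ass{\C_{(i)}} = X_i$ and hence disjointness from $Y_i$. This is a routine dimension-shift along the minimal injective coresolution of $R$: the short exact sequences $0 \to \Cosyz{j-1}R \to E_{j-1}(R) \to \Cosyz{j}R \to 0$ together with injectivity of $E_{j-1}(R)$ give natural isomorphisms $\Ext{k+1}{R}{\Cosyz{j}R}{C} \cong \Ext{k}{R}{\Cosyz{j-1}R}{C}$ for $k \geq 1$, and iterating shows $\Ext{k}{R}{\Cosyz{i-1}R}{C} \cong \Ext{k-i+1}{R}{R}{C}$, which vanishes for $k \geq i$ by projectivity of $R$.

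Condition (i) is the content of the lemma. It says $X_i = \Ass{\C_{(i)}}$ is closed under generalization; since $\C_{(i)}$ is itself an $(n-i+1)$-cotilting class by Lemma~\ref{lem:stepdown}, it suffices to establish the more uniform statement that $\Ass\D$ is generalization-closed for every cotilting class $\D$. I would proceed by induction on the cotilting dimension $m$ of $\D$. The base case $m=1$ is exactly the content of Theorem~\ref{thm:class-1-cot}, where $\Ass\D$ is the complement of a specialization closed subset of $\Spec R$. For the inductive step, $\D_{(2)}$ is $(m-1)$-cotilting, so by the induction hypothesis $\Ass\D_{(2)}$ is generalization-closed; given $\q \in \Ass\D$ and $\p \subseteq \q$, the containment $\D \subseteq \D_{(2)}$ yields $\q \in \Ass\D_{(2)}$, hence $\p \in \Ass\D_{(2)}$, and therefore $k(\p) \in \D_{(2)}$ by Proposition~\ref{prop:k(p)_inside}(i) applied to $\D_{(2)}$.

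The delicate part is the upgrade from $k(\p) \in \D_{(2)}$ to $k(\p) \in \D$, and this is where I expect the main difficulty. The strategy mirrors the proof of Proposition~\ref{prop:k(p)_inside}(i) for $n>1$: produce an ambient module $L \in \D$ containing $k(\p)$ and apply Lemma~\ref{lem:cotilt_shift} to the sequence $0 \to k(\p) \to L \to L/k(\p) \to 0$, reducing the question to verifying $L/k(\p) \in \D_{(2)}$. A natural source for $L$ is Lemma~\ref{lem:tensoring}: starting from a module $M \in \D$ witnessing $\q \in \Ass\D$ (so $R/\q \hookrightarrow M$) and the closure of $\D$ under injective envelopes (Proposition~\ref{prop:k(p)_inside}(ii)), we manufacture a candidate $L$ whose associated-prime spectrum and injective coresolution can be controlled using Lemma~\ref{lem:kp} (since the cosyzygies of $k(\p)$ have only $\p$ as associated prime) and the iterated application of Lemma~\ref{lem:ses}. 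The verification that the resulting $L/k(\p)$ lies in $\D_{(2)}$ then follows from the Bass-invariant characterization of Proposition~\ref{prop:k(p)_inside}(iii) applied to the $(m-1)$-cotilting class $\D_{(2)}$, using that by the induction hypothesis one already has $\p \in \Ass\D_{(j+1)}$ for every $j \geq 1$ (since $\q \in \Ass\D \subseteq \Ass\D_{(j+1)}$ and generalization-closedness of $\Ass\D_{(j+1)}$ is the induction hypothesis for $\D_{(2)}$). The step that requires the most care is producing the right ambient module $L \in \D$ with $k(\p) \hookrightarrow L$, as one cannot use $L = E(R/\p)$ directly without begging the question.
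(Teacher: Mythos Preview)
Your treatment of (ii) is fine and matches the paper. For (iii), your target statement $\Cosyz{i-1}R \in \C_{(i)}$ is exactly what the paper proves, but your justification contains an error: injectivity of $E_{j-1}(R)$ gives vanishing of $\Ext kR{-}{E_{j-1}(R)}$, not of $\Ext kR{E_{j-1}(R)}{-}$, so the dimension shift $\Ext {k+1}R{\Cosyz jR}C \cong \Ext kR{\Cosyz{j-1}R}C$ does not follow from that hypothesis. The paper instead argues inductively using Proposition~\ref{prop:k(p)_inside}(ii) and Lemma~\ref{lem:cotilt_shift}: from $\Cosyz{i-2}R \in \C_{(i-1)}$ one gets $E_{i-2}(R) \in \C_{(i-1)}$ by closure under injective envelopes, and then the short exact sequence $0 \to \Cosyz{i-2}R \to E_{i-2}(R) \to \Cosyz{i-1}R \to 0$ together with Lemma~\ref{lem:cotilt_shift} yields $\Cosyz{i-1}R \in (\C_{(i-1)})_{(2)} = \C_{(i)}$.

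For (i), your inductive scheme has a genuine gap precisely where you flag it: you never construct the ambient module $L \in \D$ with $k(\p) \hookrightarrow L$, and the obvious candidates (localizations of a witness for $\q \in \Ass\D$) fail because $(R/\q)_\p = 0$ when $\p \subsetneq \q$. The paper's argument is direct and needs no induction on the cotilting dimension. In your notation: from $\q \in \Ass\D$, Proposition~\ref{prop:k(p)_inside}(i) gives $k(\q) \in \D$, hence $E(R/\q) \in \D$ as it is $\{k(\q)\}$-filtered. The missing idea is that $E(R/\q) \cong E_{R_\q}(k(\q))$ is an injective cogenerator of $\rmod{R_\q}$, while for any $\p \subseteq \q$ the module $E(R/\p)$ is an injective $R_\q$-module; thus $E(R/\p)$ is a direct summand of some power $E(R/\q)^I$. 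Since cotilting classes are closed under products and summands, $E(R/\p) \in \D$ and $\p \in \Ass\D$. This in fact hands you the very module $L = E(R/\p)$ you were seeking, and renders the induction unnecessary.
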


\begin{proof}
Condition (ii) is clear from the inclusions $\C = \C_{(1)} \subseteq \dots \subseteq \C_{(n)}$. Condition (iii) holds for $i = 1$ because $R \in \C$; for $1 < i \leq n$ it follows by induction using Lemma \ref{lem:cotilt_shift} and Proposition \ref{prop:k(p)_inside}(ii). 

In order to show (i), we prove that each $X_i$ is closed under generalization. Let $\p \in X_i$. Then $k(\p) \in \C_{(i)}$ by Proposition~\ref{prop:k(p)_inside}(i). Hence $E(k(\p)) \in \C_{(i)}$ and  $E_R(k(\p)) \cong E_{R_\p}(k(\p)) = E_R(R/\p)$, by Lemma \ref{lem:kp}. This implies that $\C_{(i)}$ contains an injective cogenerator for $\rmod{R_\p}$. Given any $\q \subseteq \p$ in $\Spec{R}$, $E(R/\q)$ is an injective $R_\p$-module (see e.g.\  \cite[Theorem 3.3.8(1)]{EJ}), so $E(R/\q)$ is a direct summand in $E_R(R/\p)^I$ for some set $I$. But $\C_{(i)}$ is closed under arbitrary direct products and direct summands, hence also $E(R/\q) \in \C_{(i)}$ and $\q \in X_i = \Ass{\C_{(i)}}$.
\end{proof}

Finally, we are going to prove that each class $\C = \C_{(Y_1, \dots, Y_n)}$ as in Definition~\ref{def:cx} is $n$-cotilting. We require a few definitions first.

\begin{definition} \label{def:filtr_and_cotorsion}
A class $\C$ of modules is called \emph{definable} if it is closed under direct products, direct limits and pure submodules. A pair $(\C, \mathcal D)$ of classes of modules is a \emph{cotorsion pair} if
\begin{align*}
\mathcal D &= \{ D \in \rmod{R} \mid \Ext 1RCD = 0 \textrm{ for all } C \in \C \} \quad \textrm{and} \\
\C &= \{ C \in \rmod{R} \mid \Ext 1RCD = 0 \textrm{ for all } D \in \mathcal D \}.
\end{align*}
A cotorsion pair $(\C, \mathcal D)$ is \emph{hereditary} if $\C$ is closed under taking syzygies.
\end{definition}

The following characterization of $n$-cotilting classes will be useful for completing our task:

\begin{proposition} \label{prop:char_cotilting}
Let $n \ge 0$ and $\C$ be a class of modules. Then $\C$ is $n$-cotilting, \iff all of the following conditions are satisfied:
\begin{ii}
 \item $\C$ is definable,
 \item $R \in \C$ and $\C$ is closed under taking extensions and syzygies (in conjunction with (i), this only says that $\C$ is resolving in $\rmod R$),
 \item each $n$-th syzygy module belongs to $\C$.
\end{ii}
\end{proposition}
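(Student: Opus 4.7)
The plan is to verify the forward direction by routine Ext arguments and to reduce the converse to the characterization of $n$-cotilting cotorsion pairs from \cite[Corollary 8.1.10]{GT}, already invoked in the proof of Lemma~\ref{lem:stepdown}.

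For the forward direction, suppose $\C = {}^\perp C$ for some $n$-cotilting module $C$. Condition (i) follows from Bazzoni's theorem: cotilting modules are pure-injective, so ${}^\perp C$ is definable; alternatively one may cite \cite[Theorem 8.1.7]{GT} directly. For (ii), $R \in \C$ is trivial, closure under extensions is standard from the long exact sequence of $\mathrm{Ext}_R(-,C)$, and closure under syzygies is a dimension shift: for $0 \to K \to P \to M \to 0$ with $P$ projective and $M \in \C$ one has $\Ext iRKC \cong \Ext{i+1}RMC = 0$ for $i \geq 1$. Iterating this $n$ times and using $\id RC \leq n$ yields (iii).

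For the converse, assume (i)--(iii) hold and set $\mathcal D := \C^{\perp_1}$. The strategy is to exhibit $(\C, \mathcal D)$ as an $n$-cotilting cotorsion pair. First, closure of $\C$ under syzygies together with dimension shifting gives $\mathcal D = \{D \in \rmod R : \Ext iRCD = 0 \textrm{ for all } C \in \C \textrm{ and } i \geq 1\}$, so the cotorsion pair is automatically hereditary once the cotorsion pair identity is established. Next, condition (iii) forces $\id RD \leq n$ for every $D \in \mathcal D$: given any $M \in \rmod R$, an $n$-th syzygy $\Syz{n}{M}$ lies in $\C$ by (iii), so $\Ext{n+1}RMD \cong \Ext 1R{\Syz{n}{M}}{D} = 0$. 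With a hereditary cotorsion pair whose left class is definable and whose right class has injective dimension bounded by $n$, \cite[Corollary 8.1.10]{GT} produces an $n$-cotilting module $C$ with $\C = {}^\perp C$, finishing the proof.

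The main obstacle is the cotorsion pair identity itself, namely the non-formal inclusion ${}^{\perp_1}\mathcal D \subseteq \C$. The standard route is to exploit (i) and (ii) to realise $\C$ as a special precovering class, either by deconstructibility (producing a set of generators of $\C$ via a L\"owenheim--Skolem-style argument on the definable class and then running the small object argument) or by the Enochs--Bican--El Bashir covering theorem adapted to definable resolving classes. In the formulation of \cite[Corollary 8.1.10]{GT}, this verification is absorbed into the hypotheses of the result, so once the three facts collected above are in hand, no further work should be required.
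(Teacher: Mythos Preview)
Your overall strategy matches the paper's exactly: the forward direction is handled by routine arguments together with \cite[Theorems 8.1.7 and 8.1.10]{GT}, and the converse aims to produce a hereditary cotorsion pair $(\C,\mathcal D)$ with $\id{R}{D}\le n$ for all $D\in\mathcal D$ and then invoke \cite[Corollary 8.1.10]{GT}.

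The gap is in your final paragraph. You correctly isolate the non-formal inclusion ${}^{\perp_1}\mathcal D\subseteq\C$ as the crux and correctly name deconstructibility as the route, but your last sentence---that this verification ``is absorbed into the hypotheses'' of \cite[Corollary 8.1.10]{GT} so that ``no further work should be required''---is wrong. That result takes as input a hereditary cotorsion pair; it does not manufacture one from definability and resolving closure. The paper carries out precisely the deconstructibility argument you allude to: using \cite[Lemma 1.2.17]{GT}, every $M\in\C$ admits a continuous chain of pure submodules with subfactors of cardinality $\le |R|+\aleph_0$; since definable classes are closed under pure quotients \cite[Theorem 3.4.8]{PREST}, these subfactors lie in $\C$, so $\C$ coincides with the class of $\mathcal S$-filtered modules for a \emph{set} $\mathcal S$ of small representatives. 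Then \cite[Corollary 3.2.4 and Lemma 4.2.10]{GT} yield the hereditary cotorsion pair $(\C,\mathcal D)$ with $\C$ as the left-hand class. Only after this does \cite[Corollary 8.1.10]{GT} apply. You should replace the dismissive last sentence by this argument (which you already gestured at) rather than hoping it is unnecessary.
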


\begin{proof}
If $\C$ is $n$-cotilting, then $\C$ is definable by~\cite[Theorem 8.1.7]{GT}. Clearly $R \in \C$, and there is a hereditary cotorsion pair of the form $(\C,\C ^\perp)$ such that the class $\C ^\perp$ consists of modules of injective dimension $\leq n$ by \cite[Theorem 8.1.10]{GT}. This implies conditions (ii) and (iii).

Assume on the other hand that (i)-(iii) hold. Using~\cite[Lemma 1.2.17]{GT}, we can construct for each $M \in \C$ a well-ordered chain
\[ 0 = M_0 \subseteq M_1 \subseteq M_2 \subseteq \cdots \subseteq M_\alpha \subseteq M_{\alpha+1} \subseteq \cdots M_\sigma = M, \]
in $\C$ consisting of pure submodules of $M$ \st $\card{M_{\alpha+1}/M_\alpha} \le \card{R} + \aleph_0$ for each $\alpha<\sigma$ and $M_\beta = \bigcup_{\alpha<\beta} M_\alpha$ for every limit ordinal $\beta\le\sigma$.
% Since pure-exact sequences are direct limits of split exact ones,
Note that definable classes are closed under taking pure epimorphic images by~\cite[Theorem 3.4.8]{PREST}. Thus also each subfactor $M_{\alpha+1}/M_\alpha$ belongs to $\C$. In particular, it follows easily that $M \in \C$ \iff $M$ is $\mathcal S$-filtered, where $\mathcal S$ is a representative set for the modules in $\C$ of cardinality $\leq \card{R} + \aleph_0$. Since clearly $R \in \mathcal S$, we can use~\cite[Corollary 3.2.4 and Lemma 4.2.10]{GT} to infer that $\C$ fits into a hereditary cotorsion pair $(\C, \mathcal D)$. A simple dimension shifting using condition (iii) tells us that all modules in $\mathcal D$ have injective dimension at most $n$. Thus, $\C$ is an $n$-cotilting class by~\cite[Corollary 8.1.10]{GT}.
\end{proof}

Now we are ready to give the last piece of the proof of Theorem~\ref{thm:class-n-cot}.

\begin{proposition} \label{prop:cx_cotilt}
Let $(Y_1, \dots, Y_n)$ be a sequence of subsets of $\Spec{R}$ meeting the requirements of Definition~\ref{def:cx}. Then the class $\C = \C_{(Y_1, \dots, Y_n)}$ is $n$-cotilting.
\end{proposition}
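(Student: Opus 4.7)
Plan: I would apply Proposition~\ref{prop:char_cotilting} and verify that $\C = \C_{(Y_1, \dots, Y_n)}$ satisfies its three characterizing properties: (i)~definability, (ii)~being resolving in $\rmod R$ (in particular, containing $R$ and closed under extensions and syzygies), and (iii)~containing every $n$-th syzygy module. The resolving and $n$-th syzygy conditions reduce to direct computations with associated primes of cosyzygies using the lemmas of Section~\ref{sec:prelim}; definability is the principal obstacle.

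For the resolving part, $R \in \C$ is precisely condition~(iii) of Definition~\ref{def:cx}, closure under extensions is immediate from Lemma~\ref{lem:ses}(ii), and closure under syzygies is obtained by applying Lemma~\ref{lem:ses}(i) to the exact sequence $0 \to \Syz{}M \to P \to M \to 0$ with $P$ projective: we get $\Ass{\Cosyz{j-1}{\Syz{}M}} \subseteq \Ass{\Cosyz{j-2}M} \cup \Ass{\Cosyz{j-1}P}$, the first summand being disjoint from $Y_j$ by $M \in \C$ combined with the nesting $Y_j \subseteq Y_{j-1}$, and the second being controlled by $\Ass{\Cosyz{j-1}P} \subseteq \Ass{\Cosyz{j-1}R}$ and condition~(iii). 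Containment of $n$-th syzygies then follows from Corollary~\ref{cor:syzygies} combined with Remark~\ref{rem:syzygy_indep}: for $K = \Syz{n}{M}$ and $1 \le j \le n$, the bound $\Ass{\Cosyz{j-1}K} \subseteq \Ass{\Cosyz{j-1-n}M} \cup \bigcup_{\ell=0}^{n-1} \Ass{\Cosyz{j-1-\ell}R}$ simplifies via the convention on negative cosyzygies (so the $M$-term disappears), and each remaining summand is disjoint from $Y_j$, either because its index is negative and the cosyzygy is zero, or via condition~(iii) at index $j-\ell$ together with $Y_j \subseteq Y_{j-\ell}$.

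To handle definability, my plan is to realize $\C$ as the Tor-orthogonal class $\mathcal S^\intercal$ for a suitable set $\mathcal S \subseteq \rfmod R$ of finitely generated modules of projective dimension at most $n$; Lemma~\ref{lem:cofinite}(i) would then yield at once that $\C$ is an $n$-cotilting class, hence in particular definable. To build $\mathcal S$, observe that for each $1 \le i \le n$ and $\p \in Y_i$, condition~(iii) of Definition~\ref{def:cx} together with the nesting $Y_i \subseteq Y_{i-1} \subseteq \cdots \subseteq Y_1$ forces all Bass invariants $\mu_0(\p, R), \dots, \mu_{i-1}(\p, R)$ to vanish; Lemma~\ref{lem:pdn} then supplies finitely generated test modules (morally Auslander-Bridger transposes of $(i-1)$-syzygies of $R/\p$) of projective dimension~$i$ whose associated Tor-vanishing encodes the local condition $\mu_{i-1}(\p, M) = 0$. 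The hardest step will be the rigorous matching $\mathcal S^\intercal = \C$: the condition $\mu_{i-1}(\p, M) = 0$ is local at $\p$ (vanishing of Ext over $R_\p$), whereas Tor-vanishing against a finitely generated module is a priori global. Bridging the gap requires flat base change along $R \to R_\p$ and careful use of the specialization-closedness of each $Y_i$, so that the local conditions can be coherently re-packaged as a single global Tor-orthogonality. Once this matching is verified, both Proposition~\ref{prop:char_cotilting} and Lemma~\ref{lem:cofinite}(i) apply and the proof concludes.
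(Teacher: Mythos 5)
Your resolving and $n$-th syzygy computations match the paper's, but the treatment of definability diverges. The paper verifies the three definability closure conditions directly, staying with the local conditions $\Ext{i}{R_\p}{k(\p)}{M_\p} = 0$ and using that these cut out definable classes over $R_\p$ because $k(\p)$ is a finitely generated $R_\p$-module; you instead propose to realize $\C = \mathcal S^\intercal$ for a set $\mathcal S$ of finitely generated transposes and then appeal to Lemma~\ref{lem:cofinite}(i). Your route is valid, and it is essentially what the paper does later in Lemma~\ref{lem:cotilt_hom_alg} and Theorem~\ref{thm:class-main}, front-loaded into the present proof. The ``hardest step'' you identify---bridging the local condition $\mu_{i-1}(\p, M) = 0$ for $\p \in Y_i$ with the global condition $\Ext{i-1}{R}{R/\p}{M} = 0$---is exactly Lemma~\ref{lem:cotilt_hom_alg}; note that the hard implication there is not flat base change (which handles the easy direction) but rather the specialization-closedness of $Y_i$: any nonzero $f \colon R/\p \to E_{i-1}(M)$ would produce an image whose associated primes lie both in $V(\p) \subseteq Y_i$ and in $\Ass{E_{i-1}(M)} \subseteq \Spec{R} \setminus Y_i$. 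One must also verify that the \emph{higher} Tor groups vanish for $M \in \C$, which works because $\Tor{k}{R}{\Tr{\Syz{i-1}{R/\p}}}{M} \cong \Ext{i-k}{R}{R/\p}{M}$ for $1 \le k \le i$ (syzygies of the transpose are again transposes) and the nesting $Y_i \subseteq Y_{i-k+1}$ covers these indices. Once $\mathcal S^\intercal = \C$ is established, Lemma~\ref{lem:cofinite}(i) gives $n$-cotilting outright, so your initial invocation of Proposition~\ref{prop:char_cotilting} becomes superfluous. As to what each approach buys: the paper's direct verification keeps the Section~\ref{sec:general} argument self-contained, whereas your finite-type realization effectively proves the Tor-description of $\C$ from Theorem~\ref{thm:class-main} at the same time, which the paper later obtains by citing the present proposition.
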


\begin{proof}
We use the characterization of $n$-cotilting classes from Proposition~\ref{prop:char_cotilting}. Clearly, $R \in \C$ by the assumptions on $(Y_1, \dots, Y_n)$. Conditions (ii) and (iii) of Proposition~\ref{prop:char_cotilting} then follow easily from Lemma~\ref{lem:ses} and Corollary~\ref{cor:syzygies} (see also Remark~\ref{rem:syzygy_indep}). Thus, it only remains to prove that $\C$ is definable.

To this end, note first that for a family of modules, the product of injective coresolutions of the modules is a (possibly non-minimal) injective coresolution of the product of the modules. Using the fact that $Y_i$ is closed under specialization for every $i$, Proposition~\ref{prop:Gabriel} tells us that the class
\[ \mathcal{E}_i = \{ E \in \ModR \mid E \textrm{ is injective and } \Ass{E} \cap Y_i = \emptyset \} \]
is closed under products for every $i$ since it is precisely the classes of all injective $R$-modules contained in the torsion-free class $\F(Y_i)$. Hence $\C$ is closed under products itself, using Definition~\ref{def:cx} and Lemma~\ref{lem:Bass}.

Assume next that $M \in \C$ and $K \subseteq M$ is a pure submodule. To prove that $K \in \C$, we must show that for each $1 \le i \le n$ and $\p \in Y_i$, we have
\[ \mu_i(\p,K) = \dim_{k(\p)} \Ext{i}{R_\p}{k(\p)}{K_\p} = 0. \]
Since the embedding $K \subseteq M$ is a direct limit of split monomorphisms and localizing at $\p$ preserves direct limits, also the embedding $K_\p \subseteq M_\p$ is pure. The conclusion that $\Ext{i}{R_\p}{k(\p)}{K_\p} = 0$ then follows from the fact that $k(\p)$ is a finitely generated $R_\p$-module and thus the class
\[ \{ N \in \rmod{R_\p} \mid \Ext{i}{R_\p}{k(\p)}{N} = 0 \} \]
is definable in $\rmod{R_\p}$, see~\cite[Example 3.1.11]{GT}.

The proof that $\C$ is closed under direct limits is similar. Namely for each $1 \le i \le n$ and $\p \in Y_i$, the class
\[ \{ M \in \rmod{R} \mid \Ext{i}{R_\p}{k(\p)}{M_\p} = 0 \} \]
is the kernel of the composition of two direct limit preserving functors: the localization at $\p$ and the functor $\Ext{i}{R_\p}{k(\p)}{-}$; and $\C$ is the intersection of all these classes.
\end{proof}

\begin{proof}[Proof of Theorem~\ref{thm:class-n-cot}]
Lemma~\ref{lem:generaliz} and Proposition~\ref{prop:cx_cotilt} show that $\Phi$ assigns to each $n$-cotilting class a sequence satisfying the conditions of Definition~\ref{def:cx}, and conversely that $\Psi$ assigns to each such sequence an $n$-cotilting class. Further, we have proved in Lemma~\ref{lem:cx_unicity} and Proposition~\ref{prop:k(p)_inside} that $\Psi$ is injective and $\Psi \circ \Phi = id$. Thus, $\Phi$ and $\Psi$ are mutually inverse bijections.
\end{proof}

We conclude our discussion by two consequences. We clarify the effect of passing from $\C$ to $\C_{(j)}$ in the sense of Definition~\ref{def:stepdown} on the corresponding filtrations of subsets of the spectrum:

\begin{corollary} \label{cor:cx_shifting}
Let $(Y_1, \dots, Y_n)$ be as in Definition~\ref{def:cx}. Then for any natural number $1 \leq j \leq n$ we have $\mathcal (\C_{(Y_1, \dots, Y_n)})_{(j)} = \C_{(Y_{j}, \dots, Y_n)}$.
\end{corollary}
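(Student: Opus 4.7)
The plan is to deduce the corollary directly from Theorem~\ref{thm:class-n-cot} by invoking the bijectivity of $\Phi$ and $\Psi$, rather than unwinding the defining Ext/Bass conditions by hand.

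First, I would check that the truncated sequence $(Y_j, \dots, Y_n)$, reindexed as $(Y'_1, \dots, Y'_{n-j+1})$ with $Y'_i = Y_{i+j-1}$, satisfies the three requirements of Definition~\ref{def:cx} for $n' = n-j+1$. Conditions (i) and (ii) are immediate as they are inherited from the original sequence. For (iii) one needs $(\Ass{\Cosyz{i-1}R}) \cap Y_{i+j-1} = \emptyset$, and this follows from the containment $Y_{i+j-1} \subseteq Y_i$ together with the original condition (iii) for $Y_i$. Hence $\C_{(Y_j, \dots, Y_n)}$ is an $(n-j+1)$-cotilting class by Proposition~\ref{prop:cx_cotilt}.

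Next, I would note that by Lemma~\ref{lem:stepdown} the class $(\C_{(Y_1, \dots, Y_n)})_{(j)}$ is also $(n-j+1)$-cotilting. The critical ingredient is the iterated identity
\[ \bigl(\mathcal{C}_{(j)}\bigr)_{(i)} = \mathcal{C}_{(i+j-1)} \qquad (1 \leq i \leq n-j+1), \]
which follows from dimension shifting exactly as indicated in Remark~\ref{rem:C2}: a module $M$ lies in $(\mathcal{C}_{(j)})_{(i)}$ iff $\Omega^{i-1}(M) \in \mathcal{C}_{(j)}$ iff $\Omega^{i+j-2}(M) \in \mathcal{C}$, which is the condition defining $\mathcal{C}_{(i+j-1)}$.

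Finally, I would apply $\Phi$ from Theorem~\ref{thm:class-n-cot} to $\mathcal{C}_{(j)}$, where $\mathcal{C} = \C_{(Y_1,\dots,Y_n)}$. Since $\Psi \circ \Phi = \mathrm{id}$ and $\mathcal{C} = \Psi(Y_1,\dots,Y_n)$, the bijection gives $Y_k = \Spec R \setminus \Ass{\mathcal{C}_{(k)}}$ for every $1 \leq k \leq n$. Combining this with the iteration identity above,
\[ \Phi(\mathcal{C}_{(j)}) = \bigl(\Spec R \setminus \Ass{\mathcal{C}_{(j)}},\, \dots,\, \Spec R \setminus \Ass{\mathcal{C}_{(n)}}\bigr) = (Y_j, \dots, Y_n), \]
so applying $\Psi$ yields $\mathcal{C}_{(j)} = \C_{(Y_j, \dots, Y_n)}$, as required. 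The only mildly delicate point is verifying the iteration identity $(\mathcal{C}_{(j)})_{(i)} = \mathcal{C}_{(i+j-1)}$; everything else is bookkeeping on top of the main classification theorem.
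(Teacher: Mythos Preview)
Your proof is correct and follows essentially the same route as the paper's one-sentence argument: both use that $\C_{(Y_1,\dots,Y_n)}$ is now known to be cotilting, together with the iteration identity $(\C_{(j)})_{(i)} = \C_{(i+j-1)}$ from Remark~\ref{rem:C2}. The paper points in addition to Remark~\ref{rem:cx} (the Bass-invariant description of $\C_{(Y_1,\dots,Y_n)}$), whereas you instead invoke the bijectivity of $\Phi$ and $\Psi$ from Theorem~\ref{thm:class-n-cot} explicitly; this is a harmless repackaging of the same content.
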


\begin{proof}
Since we now know that $\C_{(Y_1, \dots, Y_n)}$ is an $n$-cotilting class, the statement follows directly from Remarks~\ref{rem:cx} and~\ref{rem:C2}.
\end{proof}

Further, we show that the dimension shifting in the sense of Definition~\ref{def:stepdown} works nicely also at the level of cotilting modules.

\begin{corollary} \label{cor:module_shifting}
Let $C$ be an $n$-cotilting module ($n \geq 2$) with the corresponding cotilting class given by $(Y_1, \dots, Y_n)$. Then $D = \Cosyz{}C \oplus \bigoplus_{\p\in X_2} E(R/\p)$ is an $(n-1)$-cotilting module with the corresponding cotilting class given by $(Y_2, \dots, Y_n)$.
\end{corollary}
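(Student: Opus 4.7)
The plan is to identify the cotilting class of $D$ and then verify the axioms (C1)--(C3) of Definition~\ref{def:cotilt}. First, since each $E(R/\p)$ is injective, $\Ext iRM{E(R/\p)} = 0$ for all $M$ and $i \geq 1$, so ${}^\perp D = {}^\perp \Cosyz{}C$; by Definition~\ref{def:stepdown} this equals $\mathcal C_{(2)}$, which by Corollary~\ref{cor:cx_shifting} coincides with $\mathcal C_{(Y_2, \dots, Y_n)}$ and is an $(n-1)$-cotilting class by Lemma~\ref{lem:stepdown}. Next, $D \in \mathcal C_{(2)}$: Lemma~\ref{lem:cotilt_shift} applied to $0 \to C \to E_0(C) \to \Cosyz{}C \to 0$, with $E_0(C) \in \mathcal C$ by Proposition~\ref{prop:k(p)_inside}(ii), gives $\Cosyz{}C \in \mathcal C_{(2)}$; and Proposition~\ref{prop:k(p)_inside} applied to the cotilting class $\mathcal C_{(2)}$ yields $E(R/\p) \in \mathcal C_{(2)}$ for every $\p \in X_2 = \Ass{\mathcal C_{(2)}}$. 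Truncating the minimal injective coresolution of $C$ gives $\id R{\Cosyz{}C} \leq n-1$, hence (C1); and (C2) follows from $D^\kappa \in \mathcal C_{(2)} = {}^\perp D$ together with the product-closure of cotilting classes.

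The main obstacle is axiom (C3), the existence of an exact sequence $0 \to D_{n-1} \to \cdots \to D_0 \to W \to 0$ with $D_i \in \Prod D$ and $W$ an injective cogenerator. My plan is to transport (C3) for $C$, which provides a sequence $0 \to C_n \to \cdots \to C_0 \to W \to 0$ with $C_i \in \Prod C$, by iteratively pushing out each $C_i$ along its short exact sequence $0 \to C_i \to E_0(C_i) \to \Cosyz{}{C_i} \to 0$. Here $E_0(C_i) \in \Prod D$ because $\Ass{C_i} \subseteq \Ass{\mathcal C} = X_1 \subseteq X_2$ exhibits $E_0(C_i)$ as a direct sum of indecomposable injectives $E(R/\p)$ with $\p \in X_2$; and $\Cosyz{}{C_i} \in \Prod \Cosyz{}C$, using that direct sums and products of injectives over a commutative noetherian ring remain injective, so that $E(C^{I_i})$ is a direct summand of $E(C)^{I_i}$ and the resulting decomposition descends to cosyzygies. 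The iterated pushouts replace each $C_i$ by the injective $E_0(C_i)$ and release $\Cosyz{}{C_i}$-summands that split off against neighbouring injectives, shortening the resolution by one step while leaving $W$ intact at the right end. The most delicate aspect I expect to confront is the bookkeeping of splittings in the pushout chain, ensuring that $W$ survives as the final term rather than accumulating non-injective summands from the $\Cosyz{}{C_i}$'s; if this direct construction proves too unwieldy, an alternative is to argue via the identity $\Prod \bar D = \mathcal C_{(2)} \cap \mathcal C_{(2)}^\perp$ valid for any cotilting module $\bar D$ of $\mathcal C_{(2)}$ (whose existence is ensured by Lemma~\ref{lem:stepdown}), verifying $D \in \mathcal C_{(2)} \cap \mathcal C_{(2)}^\perp$ and then showing the reverse containment $\Prod \bar D \subseteq \Prod D$ by analysing indecomposable pure-injective summands of $\bar D$.
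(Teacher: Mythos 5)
The first half of your argument is correct and in fact coincides with the paper's proof: you correctly identify $^\perp D = {}^\perp\Cosyz{}C = \C_{(2)} = \C_{(Y_2,\dots,Y_n)}$ using Corollary~\ref{cor:cx_shifting}, you verify $D \in \C_{(2)}$ via Lemma~\ref{lem:cotilt_shift} and Proposition~\ref{prop:k(p)_inside}, and (C1), (C2) follow as you say. The difficulty is (C3), and there your proposed pushout construction has a genuine gap.

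The problem is that you fix the injective cogenerator $W$ appearing in the (C3)-sequence for $C$ and try to coresolve \emph{the same} $W$ by $\Prod D$. But $D = \Cosyz{}C \oplus \bigoplus_{\p\in X_2}E(R/\p)$ cogenerates only modules whose associated primes lie in $X_2$: every module in $\Cog D$ embeds into a product of copies of $D$, and $\Ass D \subseteq X_2$. An injective cogenerator $W$, on the other hand, typically has $\Ass W = \Spec R$. So unless $X_2 = \Spec R$, one has $W \notin \Cog D$, and there cannot exist \emph{any} exact sequence $0 \to D_{n-1} \to \cdots \to D_0 \to W \to 0$ with $D_i \in \Prod D$. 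No amount of pushout bookkeeping can fix this, because the obstruction is at the right end of the sequence, not in the middle: the $\Cosyz{}{C_i}$'s you introduce cannot kill the ``large'' associated primes of $W$. The freedom in (C3) is precisely that one may choose a \emph{different} injective cogenerator adapted to $D$, and your construction does not exploit this.

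The paper sidesteps the problem by invoking \cite[Lemma~3.12]{BAZ}: once $\id R D \le n-1$, $D \in {}^\perp D$ and ${}^\perp D$ is closed under products are known, it suffices to show ${}^\perp D = \C_{(2)} \subseteq \Cog D$. This is then verified in one line: every $M \in \C_{(2)}$ has $\Ass M \subseteq X_2$, hence $M \subseteq E(M) = \bigoplus_{\p \in \Ass M}E(R/\p)^{(\mu_0(\p,M))}$ embeds into $\prod_{\p\in X_2}E(R/\p)^{\mu_0(\p,M)} \in \Prod D$. The injective cogenerator in the resulting (C3)-sequence is produced internally by the criterion (essentially by iterated special precovers inside $\Cog D$) and need not be your original $W$. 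Your alternative route via $\Prod\bar D = \C_{(2)}\cap\C_{(2)}^\perp$ is closer to workable, but showing $D \in \Prod\bar D$ only exhibits $D$ as a summand of a product of $\bar D$'s; you would still need $\bar D \in \Prod D$, i.e.\ essentially the same inclusion $\C_{(2)} \subseteq \Cog D$, and the proposed ``analysis of indecomposable pure-injective summands'' is not carried out. In short: the missing ingredient is the Bazzoni criterion reducing (C3) to a cogeneration statement, and that statement is exactly what the paper proves.
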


\begin{proof}
Denote $\C = {^\perp C}$ the cotilting class. Clearly ${^\perp D} = {^\perp \Cosyz{}C} = \C_{(2)}$ which is the $(n-1)$-cotilting class given by $(Y_2, \dots, Y_n)$ by Corollary \ref{cor:cx_shifting}. 
 
Obviously, $D$ has injective dimension $\leq n-1$, so (C1) holds. Condition (C2) also holds for $D$ since for any $i \geq 1$ and any cardinal $\kappa$ we have $D \in \mathcal C_{(2)}$ by Lemma \ref{lem:cotilt_shift} and Proposition \ref{prop:k(p)_inside}, hence $D^\kappa \in \mathcal C_{(2)} = {^\perp D}$, and $\Ext{i}{R}{D^\kappa}{D} = 0$. To prove (C3), it is by~\cite[Lemma 3.12]{BAZ} enough to show that $\mathcal C_{(2)} \subseteq \Cog D$, that is, each $M \in \mathcal C_{(2)}$ is cogenerated by $D$. We will show more, namely that
\[ \{ M \in \rmod{R}\mid \Ass{M} \subseteq X_2 \} \subseteq \Cog D. \]
Indeed, taking any $M$ with $\Ass{M} \subseteq X_2$, we have
\[
M \subseteq E(M) = \bigoplus_{\p \in \Ass{M}}E(R/\p)^{(\mu_0(\p,M))} \subseteq \prod_{\p \in X_2}E(R/\p)^{\mu_0(\p,M)} \in \Cog D.
\qedhere
\]
\end{proof}

% ==============================================================================
\section{The main theorem}
\label{sec:tgeneral}

We are now going to prove that the correspondence $T \mapsto T^+$ induces a bijection between the equivalence classes of $n$-tilting and $n$-cotilting modules. 
This correspondence together with Theorem \ref{thm:class-n-cot} will then rather quickly yield a proof of our main classification result.
  
We first need a translation of the definition of $\C_{(Y_1, \dots, Y_n)}$ in a homological condition.

\begin{lemma} \label{lem:cotilt_hom_alg}
Let $Y \subseteq \Spec R$ be specialization closed, $M \in \rmod R$ and $i \ge 0$ be an integer. Then the following are equivalent:
\begin{ii}
 \item $\mu_i(\p, M) = 0$ for each $\p \in Y$;
 \item $\Ext iR{R/\p}M = 0$ for each $\p \in Y$.
\end{ii}
\end{lemma}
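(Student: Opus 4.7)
The plan is to treat the two implications separately. The direction (ii)$\Rightarrow$(i) is the routine one: since $R$ is noetherian and $R/\p$ is finitely presented, $\Ext iR{R/\p}{-}$ commutes with localization at $\p$, so
\[ 0 = (\Ext iR{R/\p}M)_\p \cong \Ext i{R_\p}{k(\p)}{M_\p}, \]
which by Lemma~\ref{lem:Bass} forces $\mu_i(\p,M)=0$.

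For (i)$\Rightarrow$(ii), I would first use Lemma~\ref{lem:Bass} to translate (i) into the more convenient form $\Ass{\Cosyz iM} \cap Y = \emptyset$. Now fix $\p \in Y$; specialization closedness of $Y$ yields $V(\p) \subseteq Y$, so in particular no associated prime of $\Cosyz iM$ contains $\p$. The main step is to observe that this already forces $\Hom R{R/\p}{\Cosyz iM}=0$: a nonzero homomorphism $\phi\colon R/\p\to\Cosyz iM$ would give a nonzero element $y=\phi(\bar 1)\in \Cosyz iM$ with $\p \subseteq \Ann y$, so any $\q \in \Ass{Ry} \subseteq \Ass{\Cosyz iM}$ would satisfy $\q \supseteq \p$, and specialization closedness would place $\q \in Y$, contradicting (i).

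Finally I would upgrade the vanishing of $\Hom$ to that of $\Ext$. For $i=0$ this is immediate because $\Cosyz 0M = M$. For $i\geq 1$, applying $\Hom R{R/\p}{-}$ to the short exact sequence
\[ 0 \to \Cosyz{i-1}M \to E_{i-1}(M) \to \Cosyz iM \to 0 \]
and using injectivity of $E_{i-1}(M)$ produces a surjection $\Hom R{R/\p}{\Cosyz iM} \twoheadrightarrow \Ext 1R{R/\p}{\Cosyz{i-1}M}$; the standard iterated dimension shift along the minimal injective coresolution then identifies the target with $\Ext iR{R/\p}M$, yielding the desired vanishing. The main obstacle will be the associated-primes argument in the previous paragraph, where specialization closedness of $Y$ is essential; aside from that, everything is bookkeeping with the long exact sequence.
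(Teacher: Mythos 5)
Your proof is correct and takes essentially the same approach as the paper: the (ii)$\Rightarrow$(i) direction is the same localization computation, and for (i)$\Rightarrow$(ii) the key step — that a nonzero map $R/\p\to\Cosyz iM$ would force an associated prime $\q\supseteq\p$ of $\Cosyz iM$, landing in $Y$ by specialization closedness and contradicting $\Ass{\Cosyz iM}\cap Y=\emptyset$ — is exactly the paper's argument. The only cosmetic difference is that the paper observes directly that $\Ext iR{R/\p}M$ is a subquotient of $\Hom R{R/\p}{E_i(M)}$ and shows the latter vanishes, rather than routing through $\Hom R{R/\p}{\Cosyz iM}$ and a dimension shift as you do.
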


\begin{proof}
If $\Ext iR{R/\p}M = 0$ for each $\p \in Y$, then the isomorphism
\[ 0 = (\Ext iR{R/\p}M)_\p \cong \Ext i{R_\p}{k(\p)}{M_\p}, \]
together with Lemma~\ref{lem:Bass} yield $\mu_i(\p,M) = 0$ for all $\p \in Y$.

Conversely, suppose that $\mu_i(\p, M) = 0$ for each $\p \in Y$ and consider the beginning of an injective coresolution of~$M$:
\[ 0 \la M \la E_0(M) \la E_1(M) \la \cdots \la E_{i-1}(M) \la E_i(M). \]
Then each element of $\Ext iR{R/\p}M$ is represented by a coset of some homomorphism $f \in \Hom R{R/\p}{E_i(M)}$. If $\p \in Y$, then on one hand $\Img f$ is an $R/\p$-module, so $\Ass{(\Img f)} \subseteq V(\p) \subseteq Y$. On the other hand, $\Ass{(\Img f)} \subseteq \Ass{E_i(M)} \subseteq \Spec R \setminus Y$ by Lemma~\ref{lem:Bass}. Thus, $f = 0$, and $\Ext iR{R/\p}M = 0$ as well.
\end{proof}

Now we are in a position to state and prove our main classification result.

\begin{theorem} \label{thm:class-main}
Let $R$ be a commutative noetherian ring and $n \ge 1$. Then there are bijections between:
\begin{ii}
  \item  Sequences $(Y_1, \dots, Y_n)$ of subsets of $\Spec{R}$ as in Definition~\ref{def:cx};
  \item $n$-tilting classes $\T \subseteq \rmod R$;
  \item $n$-cotilting classes $\C \subseteq \rmod R$.
\end{ii}
The bijections assign to $(Y_1, \dots, Y_n)$ the $n$-tilting class
\begin{align*}
\T =& \{ M \in \rmod R \mid \Tor {i-1}R{R/\p}M = 0 \textrm{ for all } i = 1, \dots, n \textrm{ and } \p \in Y_i \} = \\
    & \{ M \in \rmod R \mid \Ext 1R{\Tr{\Syz{i-1}{R/\p}}}M = 0 \textrm{ for all } i = 1, \dots, n \textrm{ and } \p \in Y_i \},
\end{align*}
and the $n$-cotilting class
\begin{align*}
\C =& \{ M \in \rmod R \mid \Ext {i-1}R{R/\p}M = 0 \textrm{ for all } i = 1, \dots, n \textrm{ and } \p \in Y_i \} = \\
    & \{ M \in \rmod R \mid \Tor 1R{\Tr{\Syz{i-1}{R/\p}}}M = 0 \textrm{ for all } i = 1, \dots, n \textrm{ and } \p \in Y_i \}.
\end{align*}
\end{theorem}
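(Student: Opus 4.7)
The plan is to combine Theorem~\ref{thm:class-n-cot}, which already gives the bijection between (i) and (iii), with Lemma~\ref{lem:cofinite} to obtain the bijection with (ii). Concretely, for each sequence $(Y_1, \dots, Y_n)$ as in Definition~\ref{def:cx} I will exhibit a class $\clS \subseteq \rfmod R$ of finitely generated modules of projective dimension at most $n$ with $\clS^\intercal = \C_{(Y_1, \dots, Y_n)}$; then Lemma~\ref{lem:cofinite}(i) will identify $\clS^\perp$ as the associated $n$-tilting class. The natural candidate is
\[ \clS = \{\, \Tr{\Syz{i-1}{R/\p}} \mid 1 \le i \le n,\ \p \in Y_i \,\}. \]

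The key technical step, and where I expect the main difficulty, is to verify the hypothesis required by Lemma~\ref{lem:pdn}, namely $\Ext{j}{R}{R/\p}{R} = 0$ for every $\p \in Y_i$ and $0 \le j \le i - 1$. I will prove this by examining the minimal injective coresolution of $R$. Since $R/\p$ is finitely generated, $\Hom{R}{R/\p}{-}$ commutes with direct sums, while $\Hom{R}{R/\p}{E(R/\q)} = 0$ whenever $\q \not\supseteq \p$, because any element of $\p \setminus \q$ acts invertibly on $E(R/\q) \cong E_{R_\q}(k(\q))$. For $\q \in V(\p)$, the specialization-closure of $Y_i$ forces $\q \in Y_i \subseteq Y_{j+1}$ for $j < i$, and condition~(iii) of Definition~\ref{def:cx} at index $j+1$ gives $\q \notin \Ass{\Cosyz{j}R}$, hence $\mu_j(\q, R) = 0$ by Lemma~\ref{lem:Bass}. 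Thus no summand $E(R/\q)$ with $\q \supseteq \p$ occurs in $E_j(R)$ for $j < i$, and so $\Hom{R}{R/\p}{E_j(R)} = 0$, proving the required vanishing.

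With this hypothesis, Lemma~\ref{lem:pdn}(i) yields $\pd{R}{\Tr{\Syz{i-1}{R/\p}}} = i \le n$, so $\clS$ meets the assumptions of Lemma~\ref{lem:cofinite}. A direct computation with the dualized projective resolution of $R/\p$ (in the spirit of the proof of Lemma~\ref{lem:pdn}(ii), (iii)) shows that for every $1 \le k \le i$ one has natural isomorphisms
\begin{align*}
\Tor{k}{R}{\Tr{\Syz{i-1}{R/\p}}}{M} &\cong \Ext{i-k}{R}{R/\p}{M}, \\
\Ext{k}{R}{\Tr{\Syz{i-1}{R/\p}}}{M} &\cong \Tor{i-k}{R}{R/\p}{M}.
\end{align*}
The first isomorphism, together with Lemma~\ref{lem:cotilt_hom_alg} and the nesting $Y_i \subseteq Y_j$ for $j \le i$, shows that $M \in \clS^\intercal$ \iff $\Ext{i-1}{R}{R/\p}{M} = 0$ for all $1 \le i \le n$ and $\p \in Y_i$, yielding $\clS^\intercal = \C_{(Y_1, \dots, Y_n)}$. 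The second isomorphism identifies $\clS^\perp$ with the two displayed descriptions of $\T$.

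Applying Lemma~\ref{lem:cofinite}(i) then shows that $\clS^\perp$ is the $n$-tilting class associated to $\C_{(Y_1, \dots, Y_n)}$. Since $T \mapsto T^+$ is always injective on equivalence classes of tilting modules, the surjectivity established above promotes it to a bijection between $n$-tilting and $n$-cotilting classes in $\rmod R$, which together with Theorem~\ref{thm:class-n-cot} gives the three-way bijection between (i), (ii) and (iii).
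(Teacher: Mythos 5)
Your proof is correct and follows essentially the same route as the paper: identify $\C_{(Y_1,\dots,Y_n)}$ with $\clS^\intercal$ for $\clS = \{\Tr{\Syz{i-1}{R/\p}}\}$ via Lemma~\ref{lem:cotilt_hom_alg} and Lemma~\ref{lem:pdn}(ii), then invoke Lemmas~\ref{lem:pdn}(iii) and~\ref{lem:cofinite} to pass to the tilting side. The one small variation is that you establish $\Ext{j}R{R/\p}R = 0$ (for $0 \le j \le i-1$, $\p \in Y_i$) by a direct computation with the minimal injective coresolution of $R$, whereas the paper obtains it by noting that $R$ lies in the cotilting class $\C_{(Y_1,\dots,Y_n)}$ (via Proposition~\ref{prop:cx_cotilt}) and applying Lemma~\ref{lem:cotilt_hom_alg}; your computation is in effect a re-derivation of the special case $M = R$ of that lemma, so the approaches are not genuinely different.
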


\begin{proof}
Let $(Y_1, \dots, Y_n)$ be as in Definition~\ref{def:cx} and $\C = \C_{(Y_1, \dots, Y_n)}$. Then
\[ \C = \{ M \in \rmod R \mid \Ext {i-1}R{R/\p}M = 0 \textrm{ for all } i = 1, \dots, n \textrm{ and } \p \in Y_i \} \]
by Lemma~\ref{lem:cotilt_hom_alg}. In particular we have
\[ \Ext {i-1}R{R/\p}R = 0 \qquad \textrm{ for all } i =  1, \dots, n \textrm{ and } \p \in Y_i, \]
since $\C$ is a cotilting class by Proposition~\ref{prop:cx_cotilt}. Thus, the expression of $\C$ in terms of the Tor-groups follows from Lemma~\ref{lem:pdn}(ii) (applied for $U = R/\p$, where $i = 1, \dots, n$ and $\p \in Y_i$), and the fact that we have a bijection between (i) and (iii) is an immediate consequence of Theorem~\ref{thm:class-n-cot}. The bijection between (ii) and (iii) is a consequence of Lemmas~\ref{lem:pdn}(iii) and~\ref{lem:cofinite}.
\end{proof}

In fact, the Ext and Tor orthogonals above for $\T$ and $\C$, respectively, can be taken \wrt (typically considerably smaller) sets of finitely generated modules. For a given sequence $(Y_1, \dots, Y_n)$, let us denote for each $i$ by $\bar Y_i$ the set of minimal elements in $Y_i$ \wrt inclusion. Since $(\Spec R,\subseteq)$ satisfies the descending chain condition, for each $\p \in Y_i$ there exists $\q \in \bar Y_i$ \st $\q \subseteq \p$. We claim that

\begin{corollary} \label{cor:optimized}
With the notation of Theorem~\ref{thm:class-main}, the class $\C$ equals
\[
\{ M \in \rmod R \mid \Tor 1R{\Tr{\Syz{i-1}{R/\p}}}M = 0 \textrm{ for all } i = 1, \dots, n \textrm{ and } \p \in \bar{Y}_i \}
\]
and the class $\T$ equals
\[
\{ M \in \rmod R \mid \Ext 1R{\Tr{\Syz{i-1}{R/\p}}}M = 0 \textrm{ for all } i = 1, \dots, n \textrm{ and } \p \in \bar{Y}_i \}.
\]
\end{corollary}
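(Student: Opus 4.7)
The plan is to translate the Tor- and Ext-groups involving transposes into ordinary Ext- and Tor-groups involving $R/\p$ via Lemma~\ref{lem:pdn}, and then prove by induction on $i$ that the vanishing conditions indexed by $\bar Y_i$ already force those indexed by all of $Y_i$.

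First, for the reduction: any $\p\in Y_i$ (in particular any $\p\in\bar Y_i$) satisfies $\Ext{j}{R}{R/\p}{R}=0$ for $j=0,\dots,i-1$. Indeed, $\p\in Y_{j+1}$ for $j<i$ by Definition~\ref{def:cx}(ii), and condition (iii) combined with the specialization-closed direction of Lemma~\ref{lem:cotilt_hom_alg} (applied to $Y_{j+1}$) yields the required vanishing. Lemma~\ref{lem:pdn}(ii)--(iii) then gives natural isomorphisms
\[
\Tor{1}{R}{\Tr{\Syz{i-1}{R/\p}}}{M}\cong\Ext{i-1}{R}{R/\p}{M},\quad
\Ext{1}{R}{\Tr{\Syz{i-1}{R/\p}}}{M}\cong\Tor{i-1}{R}{M}{R/\p},
\]
so in view of Theorem~\ref{thm:class-main} the corollary reduces to showing that the Ext- (resp.\ Tor-) vanishing conditions are determined already by the subset $\bar Y_i\subseteq Y_i$.

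The inclusion $\supseteq$ is immediate, so it remains to show $\subseteq$; I treat the Ext case in detail, the Tor case being strictly parallel. Fix $M$ with $\Ext{i-1}{R}{R/\q}{M}=0$ for every $i$ and every $\q\in\bar Y_i$. For $\p\in Y_i$, use the descending chain condition on $\Spec R$ to pick $\q\in\bar Y_i$ with $\q\subseteq\p$. The short exact sequence $0\to\p/\q\to R/\q\to R/\p\to 0$ yields
\[
\Ext{i-2}{R}{\p/\q}{M}\to\Ext{i-1}{R}{R/\p}{M}\to\Ext{i-1}{R}{R/\q}{M}=0,
\]
exhibiting $\Ext{i-1}{R}{R/\p}{M}$ as a quotient of $\Ext{i-2}{R}{\p/\q}{M}$ (for $i=1$ the same sequence directly displays $\Hom{R}{R/\p}{M}$ as a submodule of $\Hom{R}{R/\q}{M}=0$, giving the base case). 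For $i\ge 2$ the induction is completed as follows: $\p/\q$ is finitely generated with $\Ann{\p/\q}=\q$, hence $\Supp{\p/\q}=V(\q)\subseteq Y_i\subseteq Y_{i-1}$ by Definition~\ref{def:cx}(ii); a standard prime filtration of $\p/\q$ has quotients $R/\p_k$ with $\p_k\in Y_{i-1}$, the inductive hypothesis yields $\Ext{i-2}{R}{R/\p_k}{M}=0$, and additivity on extensions forces $\Ext{i-2}{R}{\p/\q}{M}=0$.

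The Tor case runs identically; the Tor long exact sequence exhibits $\Tor{i-1}{R}{M}{R/\p}$ as a subobject of $\Tor{i-2}{R}{M}{\p/\q}$, and the same prime filtration argument forces the latter to vanish. The main obstacle in both cases is of a bookkeeping nature, namely the requirement that each prime $\p_k$ in the filtration of $\p/\q$ lies in $Y_{i-1}$; this relies essentially on the nesting $Y_1\supseteq\cdots\supseteq Y_n$ of Definition~\ref{def:cx}(ii), without which the induction on $i$ could not close.
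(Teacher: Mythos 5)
Your proof is correct, and it takes a genuinely different route from the paper's. The paper's proof shows that the class $\C'$ defined by the $\bar Y_i$-conditions is itself an $n$-cotilting class (by induction on $n$, building it up as intersections of Tor-orthogonals of finitely generated modules), then invokes the classification Theorem~\ref{thm:class-n-cot} to write $\C' = \C_{(Y'_1,\dots,Y'_n)}$ for some sequence with $Y'_i \subseteq Y_i$, and finally closes the gap by showing $\bar Y_i \subseteq Y'_i$, so that specialization-closedness of $Y'_i$ forces $Y'_i = Y_i$. You instead translate via Lemma~\ref{lem:pdn} (the hypothesis $\Ext{j}{R}{R/\p}{R}=0$ for $j\le i-1$ being correctly supplied by conditions (ii)--(iii) of Definition~\ref{def:cx} together with Lemma~\ref{lem:cotilt_hom_alg}) and then prove directly, by induction on $i$, that the vanishing over $\bar Y_i$ propagates to all of $Y_i$. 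The engine is the short exact sequence $0\to\p/\q\to R/\q\to R/\p\to 0$ with $\q\in\bar Y_i$, $\q\subseteq\p$, combined with a prime filtration of $\p/\q$ whose factors $R/\p_k$ have $\p_k \in V(\q)\subseteq Y_i\subseteq Y_{i-1}$ (using that $\Ass{\p/\q}=\{\q\}$ for the nonzero submodule of the domain $R/\q$), so the inductive hypothesis and the long exact sequences in Ext and Tor finish the argument. Your approach is more elementary in that it entirely bypasses the cotilting-class machinery and the uniqueness half of Theorem~\ref{thm:class-n-cot}; the paper's approach has the advantage of being shorter given that the classification is already in place, and of exhibiting $\C'$ as a cotilting class along the way, which is reused elsewhere. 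Both are valid.
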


\begin{proof}
Let us provisionally denote the above candidate for $\C = \C_{(Y_1, \dots, Y_n)}$ by $\C'$. We shall prove that $\C' = \C$ by induction on the length $n$ of the sequence $(Y_1, \dots, Y_n)$.

First of all we claim that $\C'$ is $n$-cotilting. If $n = 1$, then $\C' = \bigcap_{\p \in \bar{Y}_1}\Tr{R/\p}^\intercal$ since $\pd{R}{\Tr{R/\p}} \le 1$ by Lemma~\ref{lem:pdn}(i). Hence $\C'$ is a $1$-cotilting class by Lemma~\ref{lem:cofinite}(i). If $n > 1$, then by induction hypothesis the class
\[
\{ M \in \rmod R \mid \Tor 1R{\Tr{\Syz{i-1}{R/\p}}}M = 0 \textrm{ for all } i = 1, \dots, n-1 \textrm{ and } \p \in \bar{Y}_i \}\]
equals $\C_{(Y_1, \dots, Y_{n-1})}$. Using the fact that $\bar{Y}_n \subseteq Y_i$ for all $i = 1, \dots, n-1$ and the description of $\C_{(Y_1, \dots, Y_{n-1})}$ from Theorem~\ref{thm:class-main}, it follows that
\[
\C' = \C_{(Y_1, \dots, Y_{n-1})} \cap \bigcap_{\p \in \bar{Y}_n} \Tr{\Syz{n-1}{R/\p}}^\intercal.
\]
Hence $\C'$ is $n$-cotilting by Lemmas~\ref{lem:pdn}(i) and~\ref{lem:cofinite}(i) again.

Now clearly $\C' \supseteq \C_{(Y_1, \dots, Y_n)}$. Thus, Theorem~\ref{thm:class-n-cot} implies that $\C' = \C_{(Y'_1, \dots, Y'_n)}$ for some sequence $(Y'_1, \dots, Y'_n)$ of specialization closed sets \st $Y'_i \subseteq Y_i$ for each $i$. On the other hand, 
since $\bar Y_i\subseteq Y_i$ and $R\in\mathcal C$, we have $\Ext{i-1}{R}{R/\p}{R}=0$ for all $1\le i\le n$ and $\p\in\bar Y_i$. Combining Lemma~\ref{lem:pdn}(ii) with the proof of (ii) $\Rightarrow$ (i) in Lemma~\ref{lem:cotilt_hom_alg}, we infer that  
$\mu_{i-1}(\p,M)=0$ for each $M \in \C'$ and $\p \in \bar Y_i$. In particular $Y'_i = \Spec R \setminus \Ass\C'_{(i)} \supseteq \bar Y_i$ for each $i = 1, \dots, n$ by Remark~\ref{rem:C2} and Corollary~\ref{cor:syzygies}. Since the $Y'_i$ are specialization closed, it follows that $Y'_i = Y_i$. The claim for $\T$ is a consequence of Lemma~\ref{lem:cofinite}(ii).
\end{proof}

In view of Lemma~\ref{lem:resolv}, Theorem \ref{thm:class-main} also yields a classification of the resolving classes in $\rfmod R$ consisting of modules of bounded projective dimension:

\begin{corollary} \label{cor:parres}
Let $R$ be a commutative noetherian ring and $n \ge 1$. Then there is a bijection between:
\begin{ii}
  \item Sequences $(Y_1, \dots, Y_n)$ of subsets of $\Spec{R}$ as in Definition~\ref{def:cx}; 
  \item resolving subclasses $\mathcal S$ of $\rfmod R$ consisting of modules of projective dimension $\leq n$.
\end{ii}
The bijection assigns to a sequence $(Y_1, \dots, Y_n)$ the class of all direct summands of finitely $\mathcal E$-filtered modules where (with the notation of Corollary~\ref{cor:optimized})
\[
\mathcal E = \{ \Tr{\Syz{i-1}{R/\p}} \mid i = 1, \dots , n \textrm{ and } \p \in \bar Y_i \} \cup \{ R \}.
\]
\end{corollary}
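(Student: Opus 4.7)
The bijection between (i) and (ii) is obtained by composing two correspondences already at hand. Theorem~\ref{thm:class-main} supplies a bijection between the sequences $(Y_1,\dots,Y_n)$ of Definition~\ref{def:cx} and the $n$-tilting classes $\T$ in $\rmod R$, while Lemma~\ref{lem:resolv} supplies a bijection between those tilting classes and the resolving subclasses $\mathcal S$ of $\rfmod R$ consisting of modules of projective dimension at most $n$, via $\T \mapsto {}^\perp \T \cap \rfmod R$. So the real content of the corollary is the explicit description of $\mathcal S$ in terms of the set $\mathcal E$.

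Fix $(Y_1,\dots,Y_n)$ with associated $n$-tilting class $\T$. First, each element of $\mathcal E$ lies in $\rfmod R$ with projective dimension at most $n$: trivially for $R$, and by Lemma~\ref{lem:pdn}(i) for $E = \Tr{\Syz{i-1}{R/\p}}$ with $\p \in \bar Y_i$, since condition (iii) of Definition~\ref{def:cx} combined with $\bar Y_i \subseteq Y_i$ ensures $\Ext j R{R/\p}R = 0$ for $j < i$, so that $\pd R E = i$. Next, I verify that $\mathcal E \subseteq {}^\perp \T$. Corollary~\ref{cor:optimized} provides the $\Ext^1$-vanishing against $T \in \T$. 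For higher degrees, the finite projective resolution of $E$ built in the proof of Lemma~\ref{lem:pdn}(i), combined with the natural adjunction $\Hom R{Q^*}T \cong T \otimes_R Q$ for $Q$ finitely generated projective, yields
\[
\Ext k R{E}{T} \;\cong\;
\begin{cases}
\Tor{i-k}R T{R/\p} & \text{for } 1 \le k \le i-1, \\
T \otimes_R R/\p & \text{for } k = i,
\end{cases}
\]
all of which vanish for $T \in \T$ by the Tor-description in Theorem~\ref{thm:class-main}, since $Y_1 \supseteq \cdots \supseteq Y_i$ forces $\p \in Y_j$ for every $j \le i$. Hence $\mathcal E \subseteq \mathcal S := {}^\perp\T \cap \rfmod R$; and since $\mathcal S$ is resolving, it contains every direct summand of a finitely $\mathcal E$-filtered module.

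For the reverse inclusion, observe that the preceding computation actually yields $\mathcal E^\perp = \T$ as full Ext-orthogonals, so $\mathcal E$ generates the same cotorsion pair as $\mathcal S$. The plan is then to invoke the finitary deconstructibility results underlying the proof of Lemma~\ref{lem:resolv} (compare Definition~\ref{def:tilt} and the surrounding discussion of~\cite{BaSt}): over a commutative noetherian ring $R$ and with projective dimension bounded by $n$, any $N \in \mathcal S$ can be realized as a direct summand of a finitely $\mathcal E$-filtered module rather than only of a transfinitely filtered one. I expect this last step to be the main obstacle, since the generic Eklof/Bazzoni-style filtration theorems produce transfinite filtrations, and one must exploit both the noetherian hypothesis and the bound on projective dimension in order to descend to finite filtrations of finitely generated modules up to direct summands.
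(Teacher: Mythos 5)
Your reduction to Theorem~\ref{thm:class-main} and Lemma~\ref{lem:resolv} is exactly the paper's opening move, and your verification that each $E \in \mathcal E$ is finitely generated of projective dimension $\le n$ and lies in $\mathcal S = {}^\perp\T \cap \rfmod R$ is correct (indeed slightly more explicit than necessary: once one knows $\T = \{M \mid \Ext1R{\mathcal E}M = 0\}$ from Corollary~\ref{cor:optimized}, the inclusion $\mathcal E \subseteq {}^\perp\T$ is automatic because the associated cotorsion pair is hereditary). This also shows that every direct summand of a finitely $\mathcal E$-filtered module lies in $\mathcal S$, since $\mathcal S$ is resolving.

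However, you stop short precisely where the proof needs to be completed: the reverse inclusion, namely that every $N \in \mathcal S$ is a direct summand of a \emph{finitely} $\mathcal E$-filtered module. You correctly diagnose the difficulty — Eklof-type arguments only produce transfinite filtrations — but you do not supply the tool that resolves it. The paper closes the gap in two short steps. First, since $\T = \mathcal E^{\perp_1}$ by Corollary~\ref{cor:optimized}, the general description of cotorsion pairs generated by a set \cite[3.2.4]{GT} gives that ${}^\perp\T$ consists exactly of the direct summands of (transfinitely) $\mathcal E$-filtered modules. Second — and this is the ingredient you are missing — Hill's Lemma \cite[4.2.6]{GT} is invoked to refine a given transfinite $\mathcal E$-filtration into a rich family of subfiltrations; applied to a finitely generated module $N \in {}^\perp\T$, it produces a \emph{finitely} $\mathcal E$-filtered module having $N$ as a direct summand. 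So the step you flagged as "the main obstacle" is genuine and is handled by Hill's Lemma; without citing that (or an equivalent deconstruction theorem tailored to finitely generated modules), the proof is incomplete.
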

\begin{proof} Let $\mathcal T$ be the $n$-tilting class corresponding to the sequence $(Y_1, \dots, Y_n)$ by Theorem \ref{thm:class-main}. By Lemma \ref{lem:resolv}, $\T$ also corresponds to the resolving class $\mathcal S = {}^\perp \mathcal T \cap \rfmod R$. Using Corollary~\ref{cor:optimized}, we have $\T = \{ M \in \rmod R \mid \Ext1R{\mathcal E}M = 0\}$. Hence ${}^\perp \mathcal T$ is the class of all direct summands of $\mathcal E$-filtered modules by \cite[3.2.4]{GT}. Then $\mathcal S$ the class of all direct summands of finitely $\mathcal E$-filtered modules by Hill's Lemma \cite[4.2.6]{GT}.
\end{proof}

Another consequence of Theorem \ref{thm:class-main} reveals a remarkable lack of module approximations by resolving classes in $\rfmod R$ in the local case.

Given two classes $\mathcal A \subseteq \mathcal C \subseteq \rmod R$, we say that $\mathcal A$ is \emph{special precovering} in $\mathcal C$ provided that for each module $M \in \mathcal C$ there exists an exact sequence $0 \to B \to A \overset{f}\to C \to 0$ in $\mathcal C$ such that $A \in \mathcal A$ and $\Ext1R{A'}B = 0$ for each $A' \in \A$. The map $f$ is called a \emph{special $\A$-precover} of $C$.

Special precovering classes in $\rmod R$ are abundant: for example, if $\mathcal T$ is any tilting class, then the class ${}^\perp \mathcal T$ is special precovering in $\rmod R$, see \cite[5.1.16]{GT}. One might expect that $\mathcal S  = {}^\perp \mathcal T \cap \rfmod R$ will then be special precovering in $\rfmod R$. However, if $R$ is local then this occurs only in the trivial cases when $\mathcal T = \rmod R$ or $\mathcal S = \rfmod R$: 

\begin{corollary} \label{cor:rarespec}
Let $R$ be a commutative noetherian local ring and $\mathcal S$ be a resolving class consisting of modules of bounded projective dimension. Then the following are equivalent:
\begin{ii}
  \item $\mathcal S$ is special precovering in $\rfmod R$;
  \item either $\mathcal S$ is the class of all free modules of finite rank (and the $\mathcal S$-precovers can be taken as the projective covers), or else $R$ is regular and $\mathcal S = \rfmod R$.     
\end{ii} 
\end{corollary}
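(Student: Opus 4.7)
My plan is to invoke Corollary~\ref{cor:parres} to translate the resolving class $\mathcal S$ into a sequence $(Y_1, \dots, Y_n)$ of specialization closed subsets of $\Spec R$ satisfying Definition~\ref{def:cx}, and then to proceed by dichotomy according to whether all $Y_i$ are empty. The case when all $Y_i = \emptyset$ corresponds to $\mathcal S$ being the class of all finitely generated free $R$-modules: the set $\mathcal E$ of Corollary~\ref{cor:parres} reduces to $\{R\}$, so every module in $\mathcal S$ is a direct summand of a finitely generated free module, and over the local ring $R$ such summands are themselves free.

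The direction (ii)~$\Rightarrow$~(i) will then be immediate. If $\mathcal S$ consists of all finitely generated free modules, the projective cover $R^d \to M$ of any $M \in \rfmod R$ (which exists over a local noetherian ring) yields a special $\mathcal S$-precover, because $\Ext 1R{R^k}{B} = 0$ holds automatically. If instead $R$ is regular and $\mathcal S = \rfmod R$, the identity map on $M$ is trivially a special $\mathcal S$-precover with zero kernel.

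For the converse (i)~$\Rightarrow$~(ii), I would suppose that $\mathcal S$ is special precovering but is not the class of all finitely generated free modules. Then some $Y_i$ must be nonempty, and by the inclusions $Y_1 \supseteq \dots \supseteq Y_n$ of Definition~\ref{def:cx}, in fact $Y_1 \neq \emptyset$. The crucial intermediate claim is that $\T \cap \rfmod R = \{0\}$, where $\T = \mathcal S^\perp$ is the $n$-tilting class corresponding to $\mathcal S$ via Lemma~\ref{lem:resolv}. Fixing $\p \in Y_1$ and noting $\p \subseteq \m$ by locality of $R$, Theorem~\ref{thm:class-main} applied with $i = 1$ shows that every $N \in \T$ satisfies $N \otimes_R R/\p = 0$, i.e., $N = \p N \subseteq \m N$; for finitely generated $N$, Nakayama's lemma then forces $N = 0$. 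To finish, for any $M \in \rfmod R$ I take a special $\mathcal S$-precover $0 \to B \to A \to M \to 0$. Since $A$ is finitely generated and $R$ noetherian, $B \in \rfmod R$; since $\mathcal S$ is resolving and hence closed under syzygies, a dimension shift upgrades the vanishing of $\Ext 1R S B$ for $S \in \mathcal S$ to $B \in \mathcal S^\perp = \T$. Hence $B = 0$ and $M \cong A \in \mathcal S$, so $\mathcal S = \rfmod R$. Because $\mathcal S$ has bounded projective dimension, this forces $R$ to have finite global dimension, and therefore to be regular.

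The main obstacle is the intermediate claim $\T \cap \rfmod R = \{0\}$ when $Y_1 \neq \emptyset$; it is precisely here that the explicit description of $\T$ furnished by Theorem~\ref{thm:class-main} must be combined with Nakayama's lemma through the containment $\p \subseteq \m$ afforded by locality of $R$. Once this step is in hand, the rest of the argument is formal.
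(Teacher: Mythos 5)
Your proposal is correct and follows essentially the same route as the paper's proof: identify the sequence $(Y_1,\dots,Y_n)$ corresponding to $\mathcal S$, fix $\p \in Y_1$, use the tensor description from Theorem~\ref{thm:class-main} together with Nakayama's lemma to conclude $\mathcal T \cap \rfmod R = 0$, and then observe that the special precover of an arbitrary $M \in \rfmod R$ must have zero kernel, forcing $\mathcal S = \rfmod R$ and $R$ regular. You are slightly more explicit than the paper in two places (the dimension shift showing $B \in \mathcal S^\perp$, and the verification that the trivial cases in (ii) really are special precovering), but these are the same arguments left implicit in the text.
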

\begin{proof} We only have to prove that (i) implies (ii): Let $\mathcal T = \mathcal S ^\perp$. Then $\mathcal T$ is a tilting class by Lemma \ref{lem:resolv}. 
If $\mathcal T = \rmod R$, then $\mathcal S$ is the class of all free modules of finite rank and the claim is clear.

Otherwise, consider the sequence $(Y_1,\dots,Y_n)$ corresponding to $\mathcal T$ by Theorem \ref{thm:class-main}. Let $\p \in Y_1$. Then for each $M \in \mathcal T$, we have $R/\p \otimes _R M = 0$ and $\p M = M$ by Theorem \ref{thm:class-main}. The Nakayama Lemma thus gives $\mathcal T \cap \rfmod R = 0$. 

Let $C \in \rfmod R$. By (i), we have an exact sequence $0 \to B \to A \to C \to 0$ with $A \in \mathcal S$ and $B \in \mathcal T \cap \rfmod R$, hence $B = 0$ and $C \in \mathcal S$. Thus $\mathcal S = \rfmod R$, and $R$ has finite global dimension.          
\end{proof}

\begin{remark} \label{rem:tak}
In the particular case of henselian Gorenstein local rings, there is a more complete picture available. By \cite{Taktak}, the only resolving (special) precovering classes in $\rfmod R$ are (1) the class of all free modules of finite rank, (2) the class of all maximal Cohen-Macaulay modules, and (3) $\rfmod R$.
\end{remark}

% ==============================================================================
\section{Cotilting over Gorenstein rings and Cohen-Macaulay modules}
\label{sec:Hochster}

In this final section, we will restrict ourselves to the particular setting of Gorenstein rings, and later even regular rings. We generalize some results from \cite{TP}, but our main concern is the relation to the existence of finitely generated Cohen-Macaulay modules and, in particular, to Hochster's Conjecture~E from~\cite{H}. The main outcome here is Theorem~\ref{thm:Lp_to_Kp}, which gives new information on properties of the (conjectural) maximal Cohen-Macaulay modules.

% ------------------------------------------------------------------------------
\subsection{Cotilting classes over Gorenstein rings}
\label{subsec:cotilting_Gorenstein}

We start by considering torsion products of injective modules over Gorenstein rings. Recall that $R$ is \emph{Gorenstein}, if $R$ is commutative noetherian and $\id {R_{\p}}{R_{\p}} < \infty$ for each $\p \in \Spec R$.

\begin{lemma} \label{lem:torinj} Let $R$ be a Gorenstein ring, $\p \in \Spec R$, $k = \htt \p$, and $M \in \rmod R$.
\begin{ii}
\item Let $\q \in \Spec R$ and $i \geq 0$. Then $\Tor iR{E(R/\p)}{E(R/\q)} \neq 0$, if and only if $\p = \q$ and $i = k$.    
\item $\fd R{E(R/\p)} = k$.
\item $\p \notin \Ass M$ \iff $\mu_{0}(\p,M) = 0$ \iff $\Tor kR{E(R/\p)}{M} = 0$.
\item Let $i$ be an integer \st $0 < i \leq k$ and suppose that $\mu_{i-1}(\p,M) = 0$. Then $\Tor{k-i}R{E(R/\p)}M \cong \Tor kR{E(R/\p)}{\Cosyz iM}$. In particular, we have $\mu_i(\p,M) = 0$ \iff $\Tor {k-i}R{E(R/\p)}{M} = 0$.
\end{ii}
\end{lemma}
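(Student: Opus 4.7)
The plan is to localize at $\p$ and then exploit a concrete flat resolution of $E(R/\p)$ of length $k=\htt\p$ coming from a \v{C}ech complex.

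For (i), I would first observe that $E(R/\p)$ is already an $R_\p$-module, so $\Tor iR{E(R/\p)}{E(R/\q)}\cong \Tor i{R_\p}{E(R/\p)}{E(R/\q)_\p}$. Since every element of $E(R/\q)$ is annihilated by a power of $\q$, any $s\in\q\setminus\p$ acts locally nilpotently on $E(R/\q)$, whence $E(R/\q)_\p = 0$ whenever $\q\not\subseteq\p$. By the symmetry of Tor, swapping the roles of $\p$ and $\q$ also forces $\p\subseteq\q$, so the Tor vanishes unless $\p=\q$. We may therefore replace $R$ by $R_\p$ and assume $R$ is Gorenstein local of dimension $k$ with maximal ideal $\m$ and $E=E(R/\p)=E_R(R/\m)$.

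For (ii) and the remaining case of (i), the idea is to construct an explicit flat resolution of $E$ of length $k$. Since Gorenstein rings are Cohen-Macaulay, a system of parameters $x_1,\dots,x_k\in\m$ is a regular sequence, and the \v{C}ech complex $C^\bullet(\underline x;R)$ computes $H^i_\m(R)$. This vanishes for $i<k$ (by Cohen-Macaulayness) and equals $E$ for $i=k$ (by applying $\Gamma_\m$ to the minimal injective coresolution of $R$, whose top term is $E(R/\m)$ by Gorensteinness). We therefore obtain an exact sequence
\[
0\to R\to C^1\to\cdots\to C^k\to E\to 0,
\]
a flat resolution of $E$ of length $k$. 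Tensoring with $E$ and using that $E_x=0$ for every $x\in\m$ (each element of $E$ is $\m$-power-torsion) kills every term $E\otimes_R C^j$ with $j\ge 1$, leaving only $E\otimes_R R=E$ in homological degree $k$. Hence $\Tor iR EE$ equals $E$ for $i=k$ and vanishes otherwise, proving (i); in particular $\fd R E=k$, which is (ii). The same resolution applied to an arbitrary $M$ gives $\Tor kR{E(R/\p)}M\cong H^0_\m(M_\p)$, the $\m$-torsion submodule of $M_\p$; this is nonzero if and only if $\m\in\Ass{M_\p}$, equivalently $\p\in\Ass M$, equivalently $\mu_0(\p,M)\neq 0$ by Lemma~\ref{lem:Bass}. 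This proves (iii).

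For (iv), I would dimension-shift along the minimal injective coresolution, splitting it into short exact sequences $0\to\Cosyz{\ell-1}M\to E_{\ell-1}(M)\to\Cosyz\ell M\to 0$ for $\ell=1,\dots,i$. By (i), $\Tor tR{E(R/\p)}{E_{\ell-1}(M)}$ vanishes for $t\neq k$ and equals $E(R/\p)^{(\mu_{\ell-1}(\p,M))}$ for $t=k$. Running the long exact Tor sequence for the $\ell$-th short sequence in degree $t=k-i+\ell-1$, both adjacent terms involving $E_{\ell-1}(M)$ live in degrees strictly below $k$ when $\ell<i$, and so vanish; for $\ell=i$ the lower-degree term still vanishes, while the upper-degree term $\Tor kR{E(R/\p)}{E_{i-1}(M)}=E(R/\p)^{(\mu_{i-1}(\p,M))}$ is killed by the hypothesis $\mu_{i-1}(\p,M)=0$. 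Composing the resulting isomorphisms yields $\Tor{k-i}R{E(R/\p)}M\cong\Tor kR{E(R/\p)}{\Cosyz iM}$, and the final equivalence follows by applying (iii) to $\Cosyz iM$, using $\mu_0(\p,\Cosyz iM)=\mu_i(\p,M)$. The main technical obstacle will be the two identifications underlying the \v{C}ech resolution in (i)-(ii), namely showing $H^{i}_\m(R)=0$ for $i<k$ (from Cohen-Macaulayness) and $H^k_\m(R)\cong E$ (from Gorensteinness).
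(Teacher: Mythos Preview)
Your proof is correct. Part~(iv) is essentially the same dimension-shifting argument as in the paper, but your treatment of (i)--(iii) is genuinely different and worth noting.

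For the case $\p\neq\q$ in (i), the paper argues directly: pick $r\in\p\setminus\q$ (or vice versa), observe that multiplication by $r$ is locally nilpotent on one factor and an isomorphism on the other, and conclude the Tor group is zero. Your localization argument reaches the same conclusion via $E(R/\q)_\p=0$; the underlying mechanism is identical.

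The real divergence is in the local case $\p=\q$, in (ii), and in (iii). The paper simply cites external results (\cite[Theorem 9.4.6]{EJ} for the Tor computation, \cite[Proposition 5.1.2]{X} for the flat dimension) and then deduces (iii) by embedding $M$ in $E(M)$ and using that $\Tor kR{E(R/\p)}{-}$ is left-exact when $k=\fd R{E(R/\p)}$. You instead build an explicit flat resolution of $E=E_{R_\p}(k(\p))$ from the \v{C}ech complex on a system of parameters, using $H^i_\m(R_\p)=0$ for $i<k$ and $H^k_\m(R_\p)\cong E$. Tensoring this resolution with $E$ collapses to a single nonzero term, giving (i) and (ii) simultaneously; tensoring with $M_\p$ identifies $\Tor kR{E(R/\p)}M$ with $\Gamma_\m(M_\p)$, from which (iii) follows at once.

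Your route is more self-contained and conceptually transparent: it explains \emph{why} the flat dimension is exactly $\htt\p$ and makes the link to local cohomology visible. The paper's route is shorter on the page but pushes the substance into the references. The only point you should make explicit is that the \v{C}ech complex computes $H^\bullet_\m$ because the parameters generate $\m$ up to radical; you flag this as the ``main technical obstacle,'' but over a Gorenstein local ring both vanishing below $k$ and the identification $H^k_\m(R)\cong E$ follow in one stroke from applying $\Gamma_\m$ to the minimal injective resolution of $R$, whose $j$-th term is $\bigoplus_{\htt\q=j}E(R/\q)$.
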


\begin{proof}
(i) Let $i \geq 0$. If $r \in \p \setminus \q$, then the multiplication by $r$ is locally nilpotent on $E(R/\p)$, but an isomorphism on $E(R/\q)$. So both is true of the endomorphism of $\Tor iR{E(R/\p)}{E(R/\q)}$ given by the multiplication by $r$. This is only possible when $\Tor iR{E(R/\p)}{E(R/\q)} = 0$. (Note that this argument does not need the Gorenstein assumption). 

For the remaining case of $\p = \q$, we can assume that $R$ is local by \cite[Theorem 3.3.3]{EJ}; then the result is a consequence of \cite[Theorem 9.4.6]{EJ}.

(ii) This is proved in \cite[Proposition 5.1.2]{X}. 

(iii) The first equivalence is just a reminder of Lemma~\ref{lem:Bass}. For the second, assume $\mu_{0}(\p,M) = 0$. By Lemma \ref{lem:Bass}, the indecomposable decomposition of $E(M)$ does not contain any copy of $E(R/\p)$, so $\Tor kR{E(R/\p)}{E(M)} = 0$ by part (i). Since $\fd R{E(R/\p)} = k$ by part (ii), the kernel of the functor $\Tor kR{E(R/\p)}{-}$ is closed under submodules, so $\Tor kR{E(R/\p)}{M} = 0$. 

Conversely, if $\Tor kR{E(R/\p)}{M} = 0$ and $\p \in \Ass M$, then $\Tor kR{E(R/\p)}{R/\p} = 0$ by part (ii), so localizing at $\p$ we have $\Tor k{R_{\p}}{E(k(\p))}{k(\p)} = 0$, see \cite[2.1.11]{EJ}. So $\Tor k{R_{\p}}{E(k(\p))}{E(k(\p))} = 0$, because $E(k(\p))$ is a $\{ k(\p) \}$-filtered $R_{\p}$-module by Lemma \ref{lem:kp}, in contradiction with part (i) for the local Gorenstein ring $R_{\p}$.

(iv) Notice that by (i) and (iii) we have
\[ \Tor{k-i+j}R{E(R/\p)}{E_j(M)} = 0 = \Tor{k-i+j+1}R{E(R/\p)}{E_j(M)} \]
for every $0 \leq j < i$, where $E_j(M)$ is the $j$-th term of a minimal injective coresolution of $M$. Indeed, the right hand side equality for $j = i-1$ follows as in (iii) with $\Cosyz{i-1}M$ in place of $M$, together with the assumption that $\mu_{i-1}(\p,M) = 0$.

Now, the short exact sequences $0 \to \Cosyz jM \to E_j(M) \to \Cosyz{j+1}M \to 0$, where $j$ again ranges from $0$ to $i-1$, give rise to exact sequences
\begin{multline*}
0 = \Tor{k-i+j+1}R{E(R/\p)}{E_j(M)} \la \Tor{k-i+j+1}R{E(R/\p)}{\Cosyz{j+1}M} \la \\
\la \Tor{k-i+j}R{E(R/\p)}{\Cosyz jM} \la \Tor{k-i+j}R{E(R/\p)}{E_j(M)} = 0.
\end{multline*}
Thus, $\Tor{k-i+j+1}R{E(R/\p)}{\Cosyz{j+1}M} \cong \Tor{k-i+j}R{E(R/\p)}{\Cosyz jM}$ for each $j < i$, and by induction:
\[ \Tor{k}R{E(R/\p)}{\Cosyz iM} \cong \Tor{k-i}R{E(R/\p)}M. \]
The second claim is an immediate consequence of part (iii) applied to $\Cosyz iM$ and of Lemma~\ref{lem:Bass}.
\end{proof}

A direct consequence is another expression of an $n$-cotilting class over a Gorenstein ring, which is alternative to the ones in Theorem~\ref{thm:class-main} and follows directly from Lemma~\ref{lem:torinj}(iii) and~(iv).

\begin{proposition} \label{prop:another}
Let $R$ be a Gorenstein ring, $(Y_1, \dots, Y_n)$ a sequence of subsets of $\Spec{R}$ as in Definition~\ref{def:cx} and $\C = \C_{(Y_1, \dots, Y_n)}$ the corresponding $n$-cotilting class, following Theorem~\ref{thm:class-n-cot}. Then
\[ \C = \{ M \in \rmod R \mid \Tor{\htt{\p}-i+1}R{E(R/\p)}M = 0 \textrm{ for all } i = 1, \dots, n \textrm{ and } \p \in Y_i \}. \]
\end{proposition}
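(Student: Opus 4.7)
The plan is to show that, for each individual prime $\p$ and each prime $M\in\rmod R$, the Bass-vanishing conditions defining $\C_{(Y_1,\dots,Y_n)}$ and the Tor-vanishing conditions in the stated class agree, by running an induction on the index $i$ prime-by-prime using Lemma~\ref{lem:torinj}(iii)--(iv).

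First I would rewrite $\C_{(Y_1,\dots,Y_n)}$ via Remark~\ref{rem:cx} as the set of $M\in\rmod R$ with $\mu_{i-1}(\p,M)=0$ for all $1\le i\le n$ and all $\p\in Y_i$. Condition (iii$^*$) of Remark~\ref{rem:cx} guarantees that every $\p\in Y_i$ satisfies $\htt\p\ge i$, so the proposed Tor-index $\htt\p-i+1$ is always at least $1$, and the range-of-validity hypothesis $0<j\le\htt\p$ of Lemma~\ref{lem:torinj}(iv) can be met. I would also observe that, thanks to the nesting $Y_1\supseteq\cdots\supseteq Y_n$, both sets of conditions can be repackaged prime-by-prime: for each $\p\in\Spec R$, let $i_\p$ be the largest $i$ with $\p\in Y_i$ (and $i_\p=0$ otherwise); then $M\in\C_{(Y_1,\dots,Y_n)}$ iff for every $\p$ with $i_\p\ge 1$ one has $\mu_{i-1}(\p,M)=0$ for all $1\le i\le i_\p$, and similarly for the Tor-formulation.

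It therefore suffices, for each $\p$ with $k=\htt\p$, to verify the equivalence
\[ \mu_0(\p,M)=\mu_1(\p,M)=\dots=\mu_{i_\p-1}(\p,M)=0
\iff \Tor{k}{R}{E(R/\p)}M=\dots=\Tor{k-i_\p+1}{R}{E(R/\p)}M=0. \]
This I would establish by a short induction on $i_\p$. The base case $i_\p=1$ is exactly Lemma~\ref{lem:torinj}(iii). For the inductive step, assuming the equivalence for $i_\p-1$, the left-hand list contains the vanishing $\mu_{i_\p-2}(\p,M)=0$, which is precisely the hypothesis required to invoke Lemma~\ref{lem:torinj}(iv) with parameter $i_\p-1$; its ``In particular'' clause then asserts exactly that $\mu_{i_\p-1}(\p,M)=0$ is equivalent to $\Tor{k-i_\p+1}{R}{E(R/\p)}M=0$, closing the induction.

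I do not anticipate a serious obstacle here, since the heavy lifting (the functional relationship between Bass invariants and Tor against injective envelopes in the Gorenstein setting) is already packaged in Lemma~\ref{lem:torinj}. The only subtle point is bookkeeping: converting the doubly-indexed conditions $(\p\in Y_i,\ 1\le i\le n)$ into a per-prime chain via the nesting of the $Y_i$, and making sure that the Tor-degree $\htt\p-i+1$ stays in the admissible range, which is exactly what condition (iii$^*$) of Remark~\ref{rem:cx} gives us.
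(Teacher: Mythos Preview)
Your proposal is correct and follows essentially the same route as the paper, which dispatches the proposition in one line by pointing to Lemma~\ref{lem:torinj}(iii) and (iv); you have simply unpacked the bookkeeping (the prime-by-prime reindexing via the nesting $Y_1\supseteq\cdots\supseteq Y_n$ and the induction on $i_\p$) that the paper leaves implicit. One tiny remark: condition (iii$^*$) alone only excludes height $i-1$ from $Y_i$; it is (iii$^*$) together with the nesting that forces $\htt\p\ge i$ for $\p\in Y_i$, so you might want to say so where you first invoke it rather than only later.
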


\begin{proof}
This is obtained merely by combining the description of $\C$ in Definition~\ref{def:cx} with Lemma~\ref{lem:torinj}(iii) and (iv).
\end{proof}

%\begin{remark} \label{rem:another}
%For what follows, the most important case is $n=1$. Passing to $n>1$ involves just applying homological machinery developed up to this place. It may, therefore, be helpful to rewrite the equality above for this special case. Let $Y \subseteq \Spec R$ be closed under specialization and $\Ass R \cap Y = \emptyset$. Denoting by $\C(Y)$ the $1$-cotilting class corresponding to $Y$ as in Theorem~\ref{thm:class-dim1}, we have the equality
% 
%\[ \C(Y) = \{ M \in \rmod R \mid \Tor{\htt{\p}}R{E(R/\p)}M = 0 \textrm{ for all } \p \in Y \}. \]
%\end{remark}

Specializing Theorem~\ref{thm:class-main} and Corollary~\ref{cor:optimized} to Gorenstein rings, we almost immediately get a formula as in Proposition~\ref{prop:another}, but with finitely generated modules. Some price must be paid for this, however, in terms of associated prime ideals, as we will see later in Remark~\ref{rem:Lp_ht0}. Recall that as in Corollary~\ref{cor:optimized} we denote for a set $Y \subseteq \Spec R$ by $\bar Y$ the set of all minimal elements of the poset $(Y, \subseteq)$. We also introduce a notation which we will use in the rest of the paper:

\begin{definition} \label{def:Lp}
Let $R$ be Gorenstein and $\p \in \Spec R$ of height $\ge 1$. We denote
\[ L(\p) = \Tr{\Syz{\htt\p-1}{R/\p}}. \]
\end{definition}

\begin{proposition} \label{prop:cotilt_goren}
Let $R$ be a Gorenstein ring, $(Y_1, \dots, Y_n)$ a sequence of subsets of $\Spec{R}$ as in Definition~\ref{def:cx}. Then the $n$-cotilting class $\C = \C_{(Y_1, \dots, Y_n)}$ corresponding to $(Y_1, \dots, Y_n)$ by Theorem~\ref{thm:class-n-cot} equals
\[ \C = \{ M \in \rmod R \mid \Tor{\htt{\p}-i+1}R{L(\p)}M = 0 \textrm{ for all } i = 1, \dots, n \textrm{ and } \p \in \bar Y_i \}. \]
and the associated $n$-tilting class $\T = \T_{(Y_1, \dots, Y_n)}$ equals
\[ \T = \{ M \in \rmod R \mid \Ext{\htt{\p}-i+1}R{L(\p)}M = 0 \textrm{ for all } i = 1, \dots, n \textrm{ and } \p \in \bar Y_i \}. \]
\end{proposition}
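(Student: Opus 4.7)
My plan is to deduce both formulae from Corollary~\ref{cor:optimized}, which already characterises $\C$ by the vanishing of $\Tor{1}{R}{\Tr{\Syz{i-1}{R/\p}}}{M}$ and $\T$ by the vanishing of $\Ext{1}{R}{\Tr{\Syz{i-1}{R/\p}}}{M}$ for $i = 1, \dots, n$ and $\p \in \bar Y_i$. It therefore suffices to establish, for each such $\p$, the chains of natural isomorphisms
\[ \Tor{\htt{\p} - i + 1}{R}{L(\p)}{M} \;\cong\; \Ext{i-1}{R}{R/\p}{M} \;\cong\; \Tor{1}{R}{\Tr{\Syz{i-1}{R/\p}}}{M}, \]
\[ \Ext{\htt{\p} - i + 1}{R}{L(\p)}{M} \;\cong\; \Tor{i-1}{R}{R/\p}{M} \;\cong\; \Ext{1}{R}{\Tr{\Syz{i-1}{R/\p}}}{M}. \]

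The first preliminary is to note that for $\p \in \bar Y_i \subseteq Y_i$ one has $\htt{\p} \ge i$. This follows by combining conditions (ii)--(iii) of Definition~\ref{def:cx} with the Gorenstein equality $\Ass{\Cosyz{j-1}R} = \{\q \mid \htt{\q} = j-1\}$ recorded in Remark~\ref{rem:cx}. Writing $k = \htt{\p}$, the Cohen--Macaulay property of $R$ then yields $\Ext{j}{R}{R/\p}{R} = 0$ for $j = 0, \dots, k-1$. Lemma~\ref{lem:pdn}(i) gives $\pd{R}{L(\p)} = k$, and parts (ii)--(iii) of the same lemma applied with $U = R/\p$ and $n = i - 1$ (admissible since $i \le k$) produce the right-hand isomorphisms above.

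For the left-hand isomorphisms I would construct an explicit length-$k$ projective resolution of $L(\p)$ by dualizing a finitely generated free resolution $\cdots \to P_k \to \cdots \to P_0 \to R/\p \to 0$ via $(-)^{\ast} = \Hom{R}{-}{R}$. The Ext-vanishing recorded above, together with $(R/\p)^{\ast} = 0$ (which holds because $k \ge 1$), force exactness of
\[
0 \to P_0^{\ast} \to P_1^{\ast} \to \cdots \to P_{k-1}^{\ast} \to P_k^{\ast} \to L(\p) \to 0,
\]
where the surjection onto $L(\p)$ is the one from Definition~\ref{def:Tr}. Tensoring with $M$ and invoking the standard identification $P_j^{\ast} \otimes_R M \cong \Hom{R}{P_j}{M}$ transforms the resulting complex into $\Hom{R}{P_\bullet}{M}$ truncated at degree $k$ and indexed in the reverse direction; reading off homology at the position corresponding to $j = i - 1$ (which lies in $[1, k]$ since $i \le k$) yields $\Tor{k - i + 1}{R}{L(\p)}{M} \cong \Ext{i-1}{R}{R/\p}{M}$. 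A symmetric computation using $\Hom{R}{-}{M}$ and $\Hom{R}{P_j^{\ast}}{M} \cong P_j \otimes_R M$ identifies the dual complex with $P_\bullet \otimes_R M$ read backwards, giving $\Ext{k - i + 1}{R}{L(\p)}{M} \cong \Tor{i-1}{R}{R/\p}{M}$.

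Putting these isomorphisms together matches the vanishing conditions of Corollary~\ref{cor:optimized} with those in the statement, concluding the proof. The only genuinely subtle step is the index bookkeeping when comparing the chain complex coming from the dualized resolution of $R/\p$ with the cochain complex computing $\Ext{i-1}{R}{R/\p}{M}$, and in verifying that the prescribed Tor/Ext degree $k - i + 1$ lies in the interval $[1, k]$ where the length-$k$ resolution of $L(\p)$ is usable. This interval non-triviality is precisely what the bound $\htt{\p} \ge i$ secures.
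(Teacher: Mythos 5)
Your proof is correct and takes essentially the same approach as the paper. Both arguments dualize a free resolution of $R/\p$ via $(-)^{*}$ to obtain the length-$k$ projective resolution of $L(\p)$ (relying on $\Ext{j}{R}{R/\p}{R} = 0$ for $j < k$, which holds by grade$(\p)=\htt\p$ in a Cohen--Macaulay ring, and on $\pd R{L(\p)}=k$ from Lemma~\ref{lem:pdn}(i)), and then match $\Tor{\htt{\p}-i+1}{R}{L(\p)}{-}$ with $\Tor{1}{R}{\Tr{\Syz{i-1}{R/\p}}}{-}$ before invoking Corollary~\ref{cor:optimized}; the paper records this matching as the syzygy isomorphism $\Syz{k-i}{L(\p)} \cong \Tr{\Syz{i-1}{R/\p}}$ followed by dimension shifting, whereas you recompute the homologies explicitly and pass through the intermediate identification with $\Ext{i-1}{R}{R/\p}{M}$ and $\Tor{i-1}{R}{R/\p}{M}$, but these are two phrasings of the same computation.
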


\begin{proof}
Given a prime $\p$ of height $k = \htt\p \ge 1$, note that $\Ext iR{R/\p}R = 0$ for all $i = 0, \dots, k-1$. Indeed, this follows from the shape of the injective coresolution of $R$ (see~\cite[Theorem 9.2.27]{EJ}) and the fact that $\Hom R{R/\p}{E(R/\q)} = 0$ for every $\q \in \Spec R \setminus V(\p)$. Thus, $\pd R{L(\p)} = k$ by Lemma~\ref{lem:pdn}(i). Note also that we have for every $i = 1, \dots, k$:
\[ \Syz{k-i}{L(\p)} \cong \Tr{\Syz{i-1}{R/\p}} \]
The statements on $\mathcal C$ and $\mathcal T$ follow from Corollary~\ref{cor:optimized}, using the isomorphisms of functors $\Tor{R}{k - i + 1}{L(\p)}{-} \cong \Tor{R}{1}{\Tr{\Syz{i-1}{R/\p}}}{-}$ and similarly for Ext.
\end{proof}

In connection with Cohen-Macaulay modules and Hochster's conjecture below, we shall be interested in the associated prime ideals of the modules $L(\p)$, or more generally in their Bass invariants. A step toward the goal is to understand what the classes $L^\intercal$ look like for finitely generated modules $L$ of finite flat (hence projective) dimension. Such classes are cotilting class thanks to Lemma~\ref{lem:cofinite}(i), so in particular they are of the form $\C_{(Y_1, \dots, Y_n)}$ for a sequence of subsets of $\Spec R$ as in Definition~\ref{def:cx}. Hence the problem reduces to computing $Y_1, \dots, Y_n$, which for Gorenstein rings amounts to the following general lemma:

\begin{lemma} \label{lem:gor_tor_perp}
Let $R$ be Gorenstein and $L$ be a finitely generated non-projective $R$-module of finite projective dimension $n$. Then $L^{\intercal}$ is an $n$-cotilting class and in view of the correspondence from Theorem~\ref{thm:class-n-cot} we have $L^\intercal = \C_{(Y_1, \dots, Y_n)}$, where
\[ Y_i = \Big\{\p \in \Spec{R} \mid \htt{\p} \geq i \;\mbox{and}\; \p \in \bigcup_{j=0}^{\htt{\p}-i}\Ass{\Cosyz{j}{L}}\Big\} \subseteq \Supp{L} \]
for every $i = 1, \dots, n$. Moreover:
\begin{ii}
 \item For any $0 \leq \ell \leq  n-1$ we have $(\Omega^{\ell}(L))^{\intercal} = \mathcal C_{(Y_{\ell+1}, \dots, Y_n)}$.
% \item If we define $Y_i = \emptyset$ for $i \in \mathbb Z \setminus \{1, \dots, n\}$, then for every $\ell \geq 0$ and $\p \in \Spec{R}$:
%       %
%       \[
%          \big(\htt{\p} \geq \ell+1 \textrm{ and } \p \in \Ass{\Cosyz{\ell}{L}} \setminus \Ass{\Cosyz{\ell-1}{L}} \big)
%          \;\Longleftrightarrow\;
%          \big(\p \in Y_{\htt{\p}-\ell} \setminus Y_{\htt{\p}-\ell+1}\big).
%       \]
%       %
%       In particular $\p \in \Ass{L} \setminus \Ass R$ \iff $\p \in Y_{\htt{\p}}$.
 \item For any $\p \in \Spec{R}$ we have $\p \in \Ass{L} \setminus \Ass R$ \iff $\p \in Y_{\htt{\p}}$.
\end{ii}
\end{lemma}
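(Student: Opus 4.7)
The plan has four steps. First, observe that $L^\intercal$ is an $n$-cotilting class by Lemma~\ref{lem:cofinite}(i) (with $\clS = \{L\}$), so by Theorem~\ref{thm:class-n-cot} it equals $\C_{(Y_1, \dots, Y_n)}$ for a unique sequence meeting Definition~\ref{def:cx}, with $Y_i = \Spec R \setminus \Ass{(L^\intercal)_{(i)}}$. The main task is to convert this abstract description into the Bass-invariant formula asserted in the lemma.

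For (i), I would first establish $(L^\intercal)_{(\ell+1)} = (\Omega^\ell(L))^\intercal$; Corollary~\ref{cor:cx_shifting} then delivers the conclusion. Pick any cotilting module $C$ with $L^\intercal = {}^\perp C$. By dimension shifting in $M$ on the $\Ext$ side, $M \in (L^\intercal)_{(\ell+1)}$ amounts to $\Omega^\ell(M) \in L^\intercal$, i.e.\ $\Tor jRL{\Omega^\ell(M)} = 0$ for all $j \ge 1$; a further dimension shift in $M$ on the $\Tor$ side rewrites this as $\Tor jRLM = 0$ for $j > \ell$, and a final dimension shift in $L$ recognises the latter as membership in $(\Omega^\ell(L))^\intercal$.

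The formula for $Y_i$ then arises from the chain
\[ \p \in Y_i \quad \Longleftrightarrow \quad k(\p) \notin (\Omega^{i-1}(L))^\intercal \quad \Longleftrightarrow \quad \Tor jRL{k(\p)} \ne 0 \textrm{ for some } j \ge i, \]
where the first equivalence uses Proposition~\ref{prop:k(p)_inside}(i) together with $\Ass{k(\p)} = \{\p\}$, and the second uses (i) with dimension shifting. Localizing at $\p$, the largest $j$ for which $\Tor j{R_\p}{L_\p}{k(\p)} \ne 0$ equals $\pd{R_\p}{L_\p}$ (for $L_\p \ne 0$). Here the Gorenstein hypothesis enters: Auslander--Buchsbaum over the local Gorenstein ring $R_\p$ gives $\pd{R_\p}{L_\p} = \htt\p - \dt{L_\p}$, and $\dt{L_\p} = \min\{k \mid \mu_k(\p,L) \ne 0\} = \min\{k \mid \p \in \Ass{\Cosyz k L}\}$ by Lemma~\ref{lem:Bass}. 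Hence $\pd{R_\p}{L_\p} \ge i$ unfolds to $\htt\p \ge i$ together with $\p \in \Ass{\Cosyz k L}$ for some $0 \le k \le \htt\p - i$, which is the displayed formula; the inclusion $Y_i \subseteq \Supp L$ is automatic since $\Ass{\Cosyz k L} \subseteq \Supp L$ for every $k \ge 0$.

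Part (ii) is then immediate by specialising the formula to $i = \htt\p$: the union collapses to $\Ass{\Cosyz 0 L} = \Ass L$, the inequality $\htt\p \ge i$ is trivial, and the built-in constraint $i \ge 1$ corresponds under the Gorenstein hypothesis to $\p \notin \Ass R$. The main obstacle, in my view, is matching the clean Tor-vanishing characterization of $Y_i$ with the combinatorial Bass-invariant description in the statement; the Gorenstein hypothesis is essential precisely to carry out this translation, via Auslander--Buchsbaum over $R_\p$ and the expression of depth as the first nonvanishing Bass number.
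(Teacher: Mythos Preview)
Your argument is correct and takes a genuinely different route from the paper. The paper first verifies by hand that the displayed sets $Y_i$ satisfy the conditions of Definition~\ref{def:cx} (specialization closure via the inequality $\mu_{j+1}(\q,L)\ne 0$ whenever $\mu_j(\p,L)\ne 0$ and $\q\supsetneq\p$), and then proves $L^\intercal=\C_{(Y_1,\dots,Y_n)}$ by a downward induction on $i$, testing membership with the injective hulls $E(R/\p)$ and relying on Lemma~\ref{lem:torinj} to control $\Tor jR{L}{E(R/\p)}$. You instead start from the abstract description $Y_i=\Spec R\setminus\Ass{(L^\intercal)_{(i)}}$ guaranteed by Theorem~\ref{thm:class-n-cot}, test membership with the residue fields $k(\p)$, and convert the resulting condition $\pd{R_\p}{L_\p}\ge i$ into the Bass-invariant description via Auslander--Buchsbaum over the local Cohen--Macaulay ring $R_\p$ together with the depth formula $\dt L_\p=\min\{k\mid\mu_k(\p,L)\ne 0\}$. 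This bypasses both the inductive step and Lemma~\ref{lem:torinj} entirely, and the verification of Definition~\ref{def:cx} comes for free from Lemma~\ref{lem:generaliz}. The paper's approach, on the other hand, keeps the argument parallel to Proposition~\ref{prop:another} and reuses the machinery on torsion products of injectives already in place.
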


\begin{proof}
We first focus on the properties of the subsets $Y_i \subseteq \Spec R$ as in the statement. The fact that $Y_i \subseteq \Supp{L}$ for any $1\leq i \leq n$ follows easily by Lemma~\ref{lem:Bass}.
%\cite[Theorem 9.2.4]{EJ}. 
We also prove that $(Y_1, \dots, Y_n)$ satisfies the conditions of Definition~\ref{def:cx}. Indeed, the second condition follows directly and the third condition follows from Remark \ref{rem:cx}. It remains to prove that each $Y_i$ is closed under specialization. Let us choose arbitrary $\p \in Y_i$. Since $R$ is noetherian, we only need to prove that $\q \in Y_i$ for minimal prime ideals $\q$ \st $\q \supsetneq \p$. Let us fix such $\q$. Assuming $\p \in Y_i$, we know that $\htt{\p} \geq i$ and there is $0 \leq j \leq \htt{\p}-i$ such that $\mu_{j}(\p, L) \not= 0$. By \cite[Proposition 9.2.13]{EJ}, $\mu_{j+1}(\q,L) \not= 0$ and since $R$ is Cohen-Macaulay by \cite[Theorem 2.1.12]{BrH}, we have $\htt{\q} = \htt{\p}+1$. It follows that $j+1 \leq \htt{\q} - i$ and $\q \in Y_i$.

Next, denote $\D = L^{\intercal}$. As mentioned above, $\D$ is $n$-cotilting by Lemma \ref{lem:cofinite}, where $n = \pd RL$. So is $\C_{(Y_1, \dots, Y_n)}$ by Theorem~\ref{thm:class-n-cot} and the above paragraph. Our task is to prove that the two classes are equal. We will show more. By induction on $i = n,  \dots, 1$, we show that $(\C_{(Y_1, \dots, Y_n)})_{(i)} = \D_{(i)}$ for all $1 \le i \le n$. Note that
\[ \D_{(i)} = \{ M \in \rmod{R} \mid \Tor{j}{R}{L}{M} = 0 \textrm{ for all } j \geq i\} \]
by Remark~\ref{rem:C2} and dimension shifting. In view of Theorem \ref{thm:class-n-cot} and Proposition \ref{prop:k(p)_inside}(i) we need to show that $\p \in Y_i$  \iff the $k(\p)$-filtered module $E(R/\p)$ is \emph{not} contained in $\D_{(i)}$.

Let $i = n$. From \cite[Theorem 2.2]{X} we learn that $\mu_j(\p,L) \ne 0$ only for $\htt{\p}-n \le j \le \htt{\p}$. In particular we have
\[ Y_n = \{ \p \in \Spec R \mid \htt\p \ge n \textrm{ and } \mu_{\htt{\p}-n}(\p,L) \not= 0 \}. \]
Given $\p$ of height $n$, then $\p \in Y_n$ \iff $\Tor{n}{R}{L}{E(R/\p)} \not= 0$ \iff $E(R/\p) \not \in \mathcal D_{(n)}$ by Lemma~\ref{lem:torinj}(iii). For $\p \in Y_n$ of height greater than $n$ we get the same conclusion by Lemma~\ref{lem:torinj}(iii) and~(iv) and by the fact that $\pd RL = n$.

Now suppose that $(\mathcal C_{(Y_1, \dots, Y_n)})_{(i)} = \mathcal D_{(i)}$ for some $2 \leq i \leq n$ and take an arbitrary $\p \in Y_{i-1}$. If $\p \in Y_i$ then $E(R/\p) \not \in \D_{(i)} \supseteq \mathcal D_{(i-1)}$. So we can suppose that $\p \not\in Y_i$, which means that $\htt{\p} < i$ or $\mu_{j}(\p, L) = 0$ for every $0 \leq j \leq \htt{\p} - i$. If $\htt{\p} < i$ then necessarily $\htt{\p} = i-1$ and $\mu_0(\p,L) \not= 0$. Lemma~\ref{lem:torinj}(iii) implies that $\Tor{i-1}{R}{L}{E(R/\p)} \not= 0$ and $E(R/\p) \not \in \D_{(i-1)}$. In the second case, that is, if $\htt{\p} \ge i$ but $\mu_{j}(\p, L) = 0$ for every $0 \leq j \leq \htt{\p} - i$, we must have $\mu_{\htt\p-i+1}(\p, L) \ne 0$. It follows from Lemma~\ref{lem:torinj}(iii) and (iv) that $\Tor{i-1}{R}{L}{E(R/\p)} \not= 0$ again and so $E(R/\p) \not \in \D_{(i-1)}$, too.

Conversely, let $\p$ be \st $E(R/\p) \not\in \D_{(i-1)}$. Clearly $\htt{\p} \geq i-1$ by Lemma~\ref{lem:torinj}(ii). If $E(R/\p) \not \in \D_{(i)}$ then by induction assumption $\p \in Y_i \subseteq Y_{i-1}$, so suppose that $E(R/\p) \in \D_{(i)}$. This means that $\Tor{j}{R}{L}{E(R/\p)} = 0$ for $j \geq i$. By Lemma \ref{lem:torinj}(iii) and (iv) it follows that $\mu_{\htt{\p}-(i-1)}(\p,L) \not=0$ and  $\p \in Y_{i-1}$.

Now we prove the two additional statements. Part (i) follows from Corollary~\ref{cor:cx_shifting} and Remark~\ref{rem:C2}. Part (ii) follows easily from the description of the sets $Y_i$ above.

%For (ii), fix $\ell \geq 0$ and $\p \in \Spec{R}$. By \cite[Theorem 2.2]{X}, both ends of the equivalence are false unless $0 \leq \htt{\p} - \ell \leq n$. The rest follows easily from the description of the %sets $Y_i$ above.
\end{proof}

Now, given $\p$ of height at least one, it is rather easy to show that  $\p$ is the only prime ideal of positive height  in $\Ass L(\p)$.

%and we can also determine for many other Bass invariants of $L(\p)$ whether they vanish or not:

\begin{proposition} \label{prop:ass_Lp}
Let $R$ be a Gorenstein ring, $\p$ be a prime ideal of height $k\ge 1$, and let $L(\p) = \Tr{\Syz{k-1}{R/\p}}$ as in Definition~\ref{def:Lp}. Then $\pd{R}{L(\p)} = k$ and $\Ass{L(\p)} \setminus \Ass{R} = \{\p\}$.
%for each $\ell \ge 0$ and each $\q \in \Spec{R}$ such that $\htt{\q} > \ell$:
% 
%\[
%%\q \in \big(\Ass \Cosyz{\ell}{L(\p)} \setminus \Ass \Cosyz{\ell-1}{L(\p)}\big) \;\Longleftrightarrow\; \htt{\q} = k+\ell \;\mbox{and}\; \q \in V(\p).
%\]
%%
%In particular $\Ass{L(\p)} \setminus \Ass{R} = \{\p\}$.
\end{proposition}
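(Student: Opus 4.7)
My plan is to first compute $\pd{R}{L(\p)}$ and then identify $L(\p)^\intercal$ with an explicit cotilting class of the form $\mathcal C_{(Y_1,\dots,Y_k)}$, from which the associated primes of $L(\p)$ can be read off through Lemma~\ref{lem:gor_tor_perp}(ii).

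For the projective dimension, I would apply Lemma~\ref{lem:pdn}(i) with $U = R/\p$ and $n = k-1$; it suffices to check $\Ext{i}{R}{R/\p}{R} = 0$ for $i = 0, \dots, k-1$. This is standard, since the minimal injective coresolution of a Gorenstein ring $R$ has $E_j(R) = \bigoplus_{\htt{\q}=j} E(R/\q)$ and $\Hom{R}{R/\p}{E(R/\q)} = 0$ for $\q \notin V(\p)$; no prime of height less than $k$ can contain $\p$, so the required Ext groups vanish. Hence $\pd{R}{L(\p)} = k$, and $L(\p)$ is in particular nonprojective.

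Next, I claim $L(\p)^\intercal = \mathcal C_{(V(\p),\dots,V(\p))}$ with $V(\p)$ repeated $k$ times. The constant sequence satisfies Definition~\ref{def:cx}: each $V(\p)$ is specialization closed, and over a Gorenstein ring $\Ass{\Cosyz{i-1}{R}} = \{\q \mid \htt{\q} = i-1\}$, which is disjoint from $V(\p)$ because every prime in $V(\p)$ has height at least $k \geq i$. By Proposition~\ref{prop:cotilt_goren} applied to this sequence, an $R$-module $M$ lies in the associated cotilting class iff $\Tor{k-i+1}{R}{L(\p)}{M} = 0$ for $i = 1, \dots, k$, i.e., iff $\Tor{j}{R}{L(\p)}{M} = 0$ for $j = 1, \dots, k$; and this coincides with $L(\p)^\intercal$ since $\pd{R}{L(\p)} = k$. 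By the injectivity in Theorem~\ref{thm:class-n-cot} (Lemma~\ref{lem:cx_unicity}), the sequence associated to $L(\p)$ by Lemma~\ref{lem:gor_tor_perp} must be $(V(\p),\dots,V(\p))$.

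Finally, I invoke Lemma~\ref{lem:gor_tor_perp}(ii): a prime $\q$ lies in $\Ass{L(\p)} \setminus \Ass{R}$ iff $\q \in Y_{\htt{\q}} = V(\p)$. The bound $\htt{\q} \le k$ for $\q \in \Ass{L(\p)}$ coming from \cite[Theorem~2.2]{X} (used in the proof of Lemma~\ref{lem:gor_tor_perp}), together with $\q \supseteq \p$, forces $\htt{\q} = \htt{\p}$; since $R$ is Gorenstein, hence Cohen--Macaulay and catenary, a strict containment of primes strictly increases the height, so $\q = \p$. The inclusion $\p \in \Ass{L(\p)} \setminus \Ass{R}$ follows from the same equivalence, as $\p \in V(\p)$ and $\p \notin \Ass{R}$ because $\htt{\p} = k \geq 1$. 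The main subtlety I anticipate is the bookkeeping for Definition~\ref{def:cx}(iii) for the constant sequence, but once that verification is in place, the rest is a direct application of the dictionary established in the previous sections.
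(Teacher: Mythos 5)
Your proposal is correct and follows essentially the same route as the paper: establish $\pd{R}{L(\p)} = k$ via Lemma~\ref{lem:pdn}(i) and the vanishing of $\Ext{i}{R}{R/\p}{R}$ for $i < k$, identify $L(\p)^\intercal$ with $\C_{(V(\p),\dots,V(\p))}$ using Proposition~\ref{prop:cotilt_goren}, and then read off the associated primes from Lemma~\ref{lem:gor_tor_perp}(ii). The paper's proof is more compressed (it simply cites the proof of Proposition~\ref{prop:cotilt_goren} and Lemma~\ref{lem:gor_tor_perp}(ii)), whereas you spell out the verification of Definition~\ref{def:cx} for the constant sequence and the height bound $\htt{\q} \leq k$; the only superfluous remark is the appeal to $R$ being catenary, since a strict inclusion of primes always strictly increases height in a noetherian ring.
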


\begin{proof}
The fact that $\pd{R}{L(\p)} = k$ has been shown in the proof of Proposition~\ref{prop:cotilt_goren}. By the same proposition, $L(\p)^{\intercal} = \mathcal C_{(Y_1, \dots, Y_k)}$ for $Y_1 = \dots = Y_k = V(\p)$. The statement then follows from Lemma \ref{lem:gor_tor_perp}(ii).
\end{proof}

\begin{remark} \label{rem:Lp_ht0}
Although $\Ass E(R/\p) = \{\p\}$ and Propositions~\ref{prop:another} and~\ref{prop:cotilt_goren} are formally rather similar, one cannot easily get rid of the zero height prime ideals in $\Ass L(\p)$. First of all, $L(\p)$ is only defined uniquely up to adding or splitting off a projective summand; recall Definition~\ref{def:Tr} and the comment below it.

There is a more substantial problem, however. If $\Ass L(\p) = \{\p\}$ for a particular choice of $L(\p)$, then we have $\Hom R{L(\p)}R = 0$ since $\Supp L(\p) \cap \Ass R = \emptyset$. This would imply that $\pd R{R/\p} \le \htt\p$ by the very construction of $L(\p)$. As far as we are concerned, this is a trivial situation.
In that case, we could replace $L(\p)$ by $R/\p$ in the formula in Proposition~5.4, as we will see below in Theorem~\ref{thm:equicm}. In fact, $R/\p$ would then be a Cohen-Macaulay module by Lemma~\ref{lem:relation}. The latter is certainly not true in general.
%In that case, we could replace $L(\p)$ by $R/\p$ in the formula in Proposition~\ref{prop:cotilt_goren} and, as we will see below in Lemma~\ref{lem:translate}, $R/\p$ would be a Cohen-Macaulay module. The latter is certainly not true in general.
\end{remark}

\subsection{Cohen-Macaulay modules and Hochster's conjecture}
\label{subsec:hochster}

In Propositions \ref{prop:another} and \ref{prop:cotilt_goren} we get two different expressions of cotilting classes over Gorenstein rings. Now we are going to discuss the possibility of combining these two attempts. Namely we would like to find a finitely generated module $K(\p)$ for each $\p \in \Spec{R} \setminus \Ass{R}$ \st $\pd{R}{K(\p)} = \htt{\p}$, $\Ass{K(\p)} = \{\p\}$ and such that these modules can be used to express any cotilting class. We will see later that the last property follows from the other two and that this attempt leads to the question of existence of some Cohen-Macaulay modules. Let us recall some relevant definitions and results.

\begin{definition} \label{def:cm}
Let $R$ be a commutative noetherian local ring and $M \in \rfmod R$. Then $M$ is \emph{Cohen-Macaulay} if $M \neq 0$ and $\dt M = \Kd M$, where $\dt M$ denotes the depth of $M$ and $\Kd M$ the Krull dimension of~$M$. 
A Cohen-Macaulay module is called \emph{maximal Cohen-Macaulay}, if moreover $\dt M = \Kd R$. 
If $M$ is maximal Cohen-Macaulay, then the Auslander-Buchsbaum formula \cite[9.2.20]{EJ} implies that either $M$ has infinite projective dimension, or else $M$ is free. 

If $R$ is a general commutative noetherian ring and $M \in \rfmod R$, then $M$ is \emph{Cohen-Macaulay} if $M_{\m}$ is a Cohen-Macaulay $R_{\m}$-module for each maximal ideal $\m \in \Supp M$. The ring $R$ is called \emph{Cohen-Macaulay} if it is Cohen-Macaulay as a module over itself.
\end{definition}

\begin{lemma} \label{lem:translate}
Let $R$ be a Gorenstein ring, $\p \in \Spec R$, and $K \in \rfmod R$ be such that $\Ass K = \{ \p \}$. Then the following are equivalent:
\begin{ii}
\item $K$ is a Cohen-Macaulay module such that $\pd RK < \infty$;
\item $\pd RK = \htt \p$.
\end{ii}
\end{lemma}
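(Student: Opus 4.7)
The plan is to reduce everything to a local statement via the Auslander-Buchsbaum formula and the dimension formula for Cohen-Macaulay rings (note that a Gorenstein ring is in particular Cohen-Macaulay).

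First I would observe that since $\Ass K = \{\p\}$ and every minimal prime of the support of a finitely generated module is associated, we have $\Supp K = V(\p)$. In particular, every maximal ideal $\m \in \Supp K$ contains $\p$, and Cohen-Macaulayness of $R$ gives the height formula $\Kd{K_\m} = \dim(R_\m/\p R_\m) = \htt \m - \htt \p$. Since $\pd RK = \sup_{\m}\pd {R_\m}{K_\m}$ over maximal ideals $\m \in \Supp K$, both the finiteness of $\pd RK$ and its exact value can be checked locally.

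Next I would apply Auslander-Buchsbaum: if $\pd {R_\m}{K_\m} < \infty$ at a local Gorenstein ring, then $\pd {R_\m}{K_\m} + \dt{K_\m} = \dt{R_\m} = \htt \m$. Combined with the general inequality $\dt{K_\m} \le \Kd{K_\m}$ this gives $\pd {R_\m}{K_\m} \ge \htt \m - \Kd{K_\m} = \htt\p$, with equality precisely when $K_\m$ is Cohen-Macaulay.

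To conclude (i)$\Rightarrow$(ii), assuming $K$ is Cohen-Macaulay of finite projective dimension, the above shows $\pd{R_\m}{K_\m} = \htt\p$ for every maximal $\m \in \Supp K$, hence $\pd RK = \htt\p$ (at least one such $\m$ exists since $K \ne 0$). Conversely, for (ii)$\Rightarrow$(i), the hypothesis $\pd RK = \htt\p$ makes $\pd RK$ finite and gives $\pd{R_\m}{K_\m} \le \htt\p$ for every maximal $\m \in \Supp K$, so Auslander-Buchsbaum forces $\dt{K_\m} \ge \htt\m - \htt\p = \Kd{K_\m}$; together with $\dt \le \Kd$ this yields equality and Cohen-Macaulayness of $K_\m$. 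No serious obstacle is expected here—the only point requiring a little care is ensuring $\Supp K = V(\p)$ so that the Krull dimension of each $K_\m$ is controlled by $\htt\p$, and this is immediate from $\Ass K = \{\p\}$.
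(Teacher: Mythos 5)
Your proof is correct and follows essentially the same argument as the paper: reduce to local computations at maximal ideals in $\Supp K$, use that $\Supp K = V(\p)$ (from $\Ass K = \{\p\}$) together with Cohen-Macaulayness of $R$ to get $\Kd{K_\m} = \htt\m - \htt\p$, and then apply the Auslander-Buchsbaum formula in both directions. The only cosmetic difference is that you make the step $\Supp K = V(\p)$ explicit, while the paper leaves it implicit; the substance is identical.
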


\begin{proof}
If (i) holds, then for each maximal ideal $\m \in \Supp K$, $K_{\m}$ is a Cohen-Macaulay $R_{\m}$-module of finite projective dimension, and the Auslander-Buchsbaum formula \cite[9.2.20]{EJ} gives $\pd {R_{\m}}{K_{\m}} = \htt \m - \Kd {K_{\m}}$. Since $\Ass K = \{ \p \}$,  we get $\Kd {K_{\m}} = \Kd (R/\p)_{\m} = \htt \m - \htt \p$. This proves that $\pd RK = \htt \p$.

Conversely, if (ii) holds then for each maximal ideal $\m \in \Supp K$, we have $\dt {K_{\m}} = \Kd R_{\m} - \pd {R_{\m}}{K_{\m}} \geq \Kd R_{\m} - \htt \p = \Kd (R/\p)_{\m} = \Kd K_{\m}$, so  $K_{\m}$ is a Cohen-Macaulay module.
\end{proof}

Now we shall show how to express any cotilting class using Cohen-Macaulay modules as in the latter lemma. Using the convention of Corollary~\ref{cor:optimized}, given a set $Y \subseteq \Spec R$, we denote by $\bar Y$ the set of all minimal elements of the poset $(Y, \subseteq)$.

\begin{theorem} \label{thm:equicm}
\begin{sloppypar}
Let $R$ be a Gorenstein ring and assume that for each $\p \in \Spec R \setminus \Ass R$ there exists a Cohen-Macaulay module $K(\p) \in \rfmod R$ such that $\pd R{K(\p)} = \htt{\p}$ and $\Ass K(\p) = \{ \p \}$. Then for each $(Y_1, \dots, Y_n)$ as in Definition~\ref{def:cx}, the $n$-tilting class corresponding to $(Y_1, \dots, Y_n)$ by Theorem~\ref{thm:class-main} equals
\[
\T_{(Y_1, \dots, Y_n)} =
\{ M \mid \Ext{\htt{\p}-i+1}{R}{K(\p)}{M} = 0 \textrm{ for } i = 1, \dots, n \textrm{ and } \p \in \bar{Y_i} \}
\eqno{\phantom{*}(*)}
\]
and the $n$-cotilting class corresponding to $(Y_1, \dots, Y_n)$ by Theorem \ref{thm:class-main} is 
\[
\C_{(Y_1, \dots, Y_n)} =
\{ M \mid \Tor{\htt{\p}-i+1}R{K(\p)}M = 0 \textrm{ for } i = 1, \dots, n \textrm{ and } \p \in \bar{Y_i} \}
\eqno{(**)}
\]
\end{sloppypar}
\end{theorem}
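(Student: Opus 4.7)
The plan is to prove $(*)$ and $(**)$ by first pinning down the cotilting class $K(\p)^{\intercal}$ and then, by induction on the length of the filtration, reducing each single-Tor (resp.\ single-Ext) vanishing condition to full cotilting (resp.\ tilting) class membership.

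First I would show that $K(\p)^{\intercal} = \C_{(V(\p), \dots, V(\p))}$ with $k = \htt \p$ copies of $V(\p)$, by applying Lemma~\ref{lem:gor_tor_perp} to $L = K(\p)$. The inclusion $Z_i \subseteq V(\p)$ is immediate from $\Supp K(\p) = V(\p)$, which follows from $\Ass K(\p) = \{\p\}$. For the reverse containment, given $\q \supseteq \p$, the localization $K(\p)_{\q}$ is a Cohen-Macaulay module over the Gorenstein local ring $R_{\q}$ with unique associated prime $\p R_{\q}$, so $\dt K(\p)_\q = \dim K(\p)_\q = \htt \q - k$. The classical fact that the smallest nonvanishing Bass number of a nonzero finitely generated local module is attained in depth forces $\mu_{\htt \q - k}(\q, K(\p)) \ne 0$, and the inequality $\htt \q - k \le \htt \q - i$ for $i \le k$ places $\q$ into $Z_i$. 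Combining Corollary~\ref{cor:cx_shifting} with Lemma~\ref{lem:gor_tor_perp}(i) then identifies $(\Omega^{k-i}(K(\p)))^{\intercal} = \C_{(V(\p), \dots, V(\p))}$ with $i$ copies, and dimension shifting yields the key equivalence
\[
\Tor{j}{R}{K(\p)}{M} = 0 \textrm{ for all } j \ge k - i + 1
\ \Longleftrightarrow\
\mu_{\ell}(\q, M) = 0 \textrm{ for all } \ell < i,\ \q \in V(\p),
\]
together with its tilting analogue under Theorem~\ref{thm:class-main}, which characterizes $(\Omega^{k-i}(K(\p)))^{\perp} = \T_{(V(\p), \dots, V(\p))_i}$ in terms of vanishing of $\Tor{j-1}{R}{R/\q}{M}$ for $j \le i$ and $\q \in V(\p)$.

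The ``$\subseteq$'' half of $(**)$ is then immediate: if $M \in \C_{(Y_1, \dots, Y_n)}$ and $\p \in \bar Y_i$, then $V(\p) \subseteq Y_{\ell + 1}$ for all $\ell < i$ by specialization-closedness and nesting of the $Y_j$, so the relevant $\mu_{\ell}(\q, M)$ vanish and the equivalence gives $\Tor{\htt \p - i + 1}{R}{K(\p)}{M} = 0$. For the reverse inclusion I would prove by induction on $\ell = 1, \dots, n$ that every $M$ in the right-hand side satisfies $\mu_{j - 1}(\q, M) = 0$ for all $j \le \ell$ and $\q \in Y_j$. In the inductive step, given $\q \in Y_\ell$, the DCC on $\Spec R$ supplies some $\p \in \bar Y_\ell$ with $\p \subseteq \q$; the hypothesis provides $\Tor{\htt \p - \ell + 1}{R}{K(\p)}{M} = 0$, while the inductive hypothesis applied to every prime in $V(\p) \subseteq Y_j$ (for $j < \ell$), together with the equivalence above, delivers $\Tor{j}{R}{K(\p)}{M} = 0$ for all $j \ge \htt \p - \ell + 2$. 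Combining yields the full tail vanishing $j \ge \htt \p - \ell + 1$, and the equivalence then gives $\mu_{\ell - 1}(\q, M) = 0$.

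The proof of $(*)$ proceeds by a strictly parallel induction with Ext replacing Tor and with the tilting equivalence above in place of the cotilting one, bridging the inductive step back to the description of $\T_{(Y_1, \dots, Y_n)}$ via Tor-vanishing of the $R/\q$. I expect the main obstacle to be the first step: pinning down $K(\p)^{\intercal}$ rests on the Cohen-Macaulay depth computation and on the classical Bass-number fact, while the remaining steps are careful but essentially formal bookkeeping of specialization-closedness and of the DCC reduction from $Y_i$ to $\bar Y_i$.
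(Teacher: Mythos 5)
Your proof is correct and follows essentially the same strategy as the paper: first establish that $K(\p)^\intercal = \C_{(V(\p),\dots,V(\p))}$ with $\htt\p$ copies of $V(\p)$, then chain the single Tor-vanishing conditions into full syzygy-orthogonality via the nesting and specialization-closedness of the $Y_i$ and the DCC reduction to $\bar Y_i$. Two minor differences: for the claim on $K(\p)^\intercal$ the paper simply invokes Lemma~\ref{lem:gor_tor_perp}(ii) together with the structural constraints on the $Y'_i$ (specialization-closed, nested, contained in $V(\p)$), whereas you re-derive it directly from the Cohen-Macaulay depth-equals-dimension computation and the fact that the lowest nonvanishing Bass number sits in degree equal to depth (both arguments are valid); and for $(*)$ the paper dispenses with the parallel Ext-induction at the outset by invoking Lemma~\ref{lem:cofinite} to reduce to the cotilting statement, which is more economical than carrying the two inductions side by side.
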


\begin{proof}
In view of Lemma \ref{lem:cofinite}, it suffices to prove the assertion concerning $\C_{(Y_1, \dots, Y_n)}$.

We first claim that for any $\p \in \Spec{R} \setminus \Ass{R}$ we have $K(\p)^{\intercal} = \C_{(Y^\p_1, \dots, Y^\p_{\htt\p})}$ where we denote $Y^\p_1 = \dots = Y^\p_{\htt{\p}} = V(\p)$. Indeed, $K(\p)^{\intercal}$ is a cotilting class, hence of the form $K(\p)^{\intercal} = \C_{(Y'_1, \dots, Y'_{\htt{\p}})}$. By Lemma \ref{lem:gor_tor_perp} we have $Y'_i \subseteq \Supp K(\p)=V(\p)$ and by~Definition \ref{def:cx} we know that $Y'_i$ is closed under specialization and $Y'_i \supseteq Y'_{\htt\p}$ for any $1 \leq i \leq \htt{\p}$. So it is enough to prove that $\p \in Y'_{\htt\p}$, but this has been shown in Lemma~\ref{lem:gor_tor_perp}(ii).

Having proved the claim, we prove the equality for $\C_{(Y_1, \dots, Y_n)}$ by induction of $n$. Let us denote the class $\{ M \mid \Tor{\htt{\p}-i+1}R{K(\p)}M = 0 \textrm{ for } i = 1, \dots, n \textrm{ and } \p \in \bar{Y_i} \}$ from the statement by $\D$. If $n=1$, then, using that $\pd R{K(\p)} = \htt{\p}$, we have $\D = \bigcap_{\p \in \bar{Y}_1} \Syz{\htt\p-1}{K(\p)}^\intercal$. Since $\Syz{\htt\p-1}{K(\p)}^\intercal = \C_{(Y^\p_{\htt\p})} = \C_{(V(\p))}$ by the claim and Lemma~\ref{lem:gor_tor_perp}(i), it easily follows that $\C_{(Y_1)} = \D$.

If $n>1$, we infer from the inductive hypothesis and the fact that $\bar Y_n \subseteq Y_i$ for all $i = 1, \dots, n-1$ that
\[ \D = \C_{(Y_1, \dots, Y_{n-1})} \cap \bigcap_{\p \in \bar{Y}_n} \Syz{\htt\p-n}{K(\p)}^\intercal. \]
Invoking the claim and Lemma~\ref{lem:gor_tor_perp}(i) again, we obtain
\[
\D =
\C_{(Y_1, \dots, Y_{n-1})} \cap \bigcap_{\p \in \bar{Y}_n} \C_{(Y^\p_{\htt{\p}-n+1}, \dots, Y^\p_{\htt\p})} =
\C_{(Y_1, \dots, Y_n)}.
\]
The last equality is an easy consequence of Definition~\ref{def:cx}.
\end{proof}

The existence of Cohen-Macaulay modules as in Theorem~\ref{thm:equicm} is in general only conjectural even for regular rings: a ring $R$ is \emph{regular} in case $R$ is commutative noetherian and the ring $R_{\p}$ has finite global dimension for each $\p \in \Spec R$. By \cite[Corollary 2.2.20]{BrH}, $R \cong \prod_{\p \in \Ass{R}} R/{\p}$ where $R/{\p}$ is a regular domain.

From now on, we will restrict ourselves to regular rings. Let us now complete the argument from Remark~\ref{rem:Lp_ht0} to put our results into a proper context.

\begin{lemma} \label{lem:relation}
Let $R$ be a regular ring and $\p \in \Spec R$. Then 
\[ \htt \p = \pd  {R_{\p}}{k(\p)} \leq \pd R{R/\p}. \] 
The equality $\htt \p = \pd R{R/\p}$ holds \iff $R/\p$ is a Cohen-Macaulay ring; in this case $\pd  {R_{\q}}{(R/\p)_{\q}} = \pd R{R/\p}$ for all $\q \in \Spec R$ with $\p \subseteq \q$. 
\end{lemma}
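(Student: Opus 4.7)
The plan is to assemble the statement from three standard tools: Serre's characterization of regular local rings, Auslander--Buchsbaum, and the fact that regular (hence Cohen--Macaulay) local rings are catenary. First, since $R$ is regular, $R_\p$ is a regular local ring of Krull dimension $\htt\p$, so Serre's theorem gives $\pd_{R_\p}k(\p)=\htt\p$. The inequality $\pd_{R_\p}k(\p)\le \pd_R R/\p$ follows because $(R/\p)_\p\cong k(\p)$ and projective dimension is non-increasing under localization (a free resolution localizes to a free resolution). This establishes the first line.

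For the equivalence with Cohen--Macaulayness, I would use that for any finitely generated $M$ one has $\pd_R M=\sup_{\m\in\mSpec R}\pd_{R_\m}M_\m$, so it suffices to analyze $\pd_{R_\m}(R/\p)_\m$ for each maximal $\m\supseteq\p$. Since $R_\m$ is regular local, $\dt R_\m=\htt\m$ and $(R/\p)_\m$ has finite projective dimension; the Auslander--Buchsbaum formula (\cite[9.2.20]{EJ}) then gives
\[
\pd_{R_\m}(R/\p)_\m = \htt\m - \dt(R/\p)_\m.
\]
On the other hand, catenarity of $R_\m$ yields $\Kd(R/\p)_\m=\htt(\m/\p)=\htt\m-\htt\p$. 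Combining these two identities, $\pd_{R_\m}(R/\p)_\m=\htt\p$ is equivalent to $\dt(R/\p)_\m=\Kd(R/\p)_\m$, i.e.\ to $(R/\p)_\m$ being Cohen--Macaulay. Taking the supremum over maximal $\m\supseteq\p$ (which are exactly the preimages of maximal ideals of $R/\p$), the equality $\htt\p=\pd_R R/\p$ holds precisely when $R/\p$ is a Cohen--Macaulay ring.

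Finally, assuming $R/\p$ is Cohen--Macaulay and $\q\supseteq\p$ is arbitrary, pick a maximal $\m\supseteq\q$; then $(R/\p)_\m$ is Cohen--Macaulay local, and localization of a Cohen--Macaulay local ring at a prime is again Cohen--Macaulay, so $(R/\p)_\q$ is Cohen--Macaulay. Running Auslander--Buchsbaum and the catenary identity inside $R_\q$ in the same way as above yields $\pd_{R_\q}(R/\p)_\q=\htt\q-(\htt\q-\htt\p)=\htt\p=\pd_R R/\p$. The only mildly delicate point is keeping localizations consistent and recognizing that Cohen--Macaulayness at all maximal ideals transfers to Cohen--Macaulayness at every prime in the support; everything else is a direct bookkeeping argument.
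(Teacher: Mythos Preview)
Your proof is correct and follows essentially the same route as the paper: both arguments rest on the Auslander--Buchsbaum formula combined with the dimension identity $\Kd(R/\p)_\q = \htt\q - \htt\p$ coming from catenarity, to translate the projective-dimension condition into the depth-equals-dimension condition for Cohen--Macaulayness. The only cosmetic differences are that the paper obtains $\pd_{R_\p}k(\p)=\htt\p$ via Auslander--Buchsbaum (using $\dt k(\p)=0$) rather than Serre's theorem, and it treats all primes $\q\supseteq\p$ simultaneously rather than handling maximal ideals first and then localizing.
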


\begin{proof}
First, $k(\p) = (R/\p)_{\p}$, so $\dt (R/\p)_{\p} = 0$ (cf.\ \cite[9.2.9]{EJ}). Using the Auslander-Buchsbaum formula \cite[9.2.20]{EJ}, we infer that $\pd {R_{\p}}{k(\p)} = \htt \p$. If $\p \subsetneq \q$, then $\dt R_{\q}/\p_{\q} \leq \htt \q - \htt \p$, so $\pd {R_{\q}}{R_{\q}/\p_{\q}} = \htt \q - \dt R_{\q}/\p_{\q} \geq \htt \p$. 

Since $\pd R{R/\p} = \max _{\q \in \Spec R} \pd {R_{\q}}{(R/\p)_{\q}}$, clearly $\pd R{R/\p} \geq \htt \p$. However, $R/\p$ is Cohen-Macaulay, iff the equality $\dt R_{\q}/\p_{\q} = \Kd R_{\q}/\p_{\q}$ holds for all $\q \in \Spec R$ with $\p \subseteq \q$. As $\Kd R_{\q}/\p_{\q} = \Kd R_{\q} - \htt \p = \htt \q - \htt \p$ by \cite[2.1.4]{BrH}, we may apply the Auslander-Buchsbaum formula again to conclude that $R/\p$ is Cohen-Macaulay, iff $\pd {R_{\q}}{(R/\p)_{\q}} = \htt \p$ for all $\q \in \Spec R$ with $\p \subseteq \q$. 
\end{proof}  

The assumptions of Theorem~\ref{thm:equicm} are always met for regular rings of Krull dimension $\leq 3$. In the context of tilting and cotilting classes, this leads to a simplification of the formulas from Proposition~\ref{prop:cotilt_goren}:

\begin{corollary} \label{cor:3}
If $R$ is a regular ring with $\Kd R \leq 3$, then $R/\p$ is Cohen-Macaulay for each $\p \in \Spec R$ of height $\geq 1$. In particular, given a sequence $(Y_1,Y_2,Y_3)$ as in Definition~\ref{def:cx}, the tilting class corresponding to this sequence by Theorem \ref{thm:class-main} equals 
\[ \T_{(Y_1,Y_2,Y_3)} = \{ M \mid \Ext{\htt{\p}-i+1}{R}{R/\p}{M} = 0 \textrm{ for } i = 1, 2, 3 \textrm{ and } \p \in \bar{Y_i} \} \]
and the cotilting class corresponding to it by Theorem \ref{thm:class-main} is 
\[ \C_{(Y_1,Y_2,Y_3)} = \{ M \mid \Tor{\htt{\p}-i+1}{R}{R/\p}{M} = 0 \textrm{ for } i = 1, 2, 3 \textrm{ and } \p \in \bar{Y_i} \} \]
\end{corollary}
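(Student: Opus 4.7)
The plan is to establish the Cohen--Macaulay property of $R/\p$ directly for every $\p\in\Spec R$ with $\htt\p\ge 1$, and then to invoke Theorem~\ref{thm:equicm} with the choice $K(\p)=R/\p$.

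For the Cohen--Macaulay statement, I would first reduce to the case of a regular local ring: being Cohen--Macaulay is checked locally at each maximal ideal $\m\supseteq\p$, where $(R/\p)_\m = R_\m/\p R_\m$, and the height of $\p$ is preserved on passing to $R_\m$ (chains of primes contained in $\m$ ending at $\p$ biject with chains in $R_\m$ ending at $\p R_\m$). So one may assume $R$ is a regular local ring with $\Kd R\le 3$ and $\p\ne 0$. Since $R$ is then Cohen--Macaulay, $\Kd{R/\p}=\Kd R-\htt\p\le 2$. If $\Kd{R/\p}\le 1$, then $R/\p$ is a noetherian domain of Krull dimension at most one, and is therefore automatically Cohen--Macaulay: any nonzero element of the maximal ideal is a nonzerodivisor and yields a regular sequence of length equal to the dimension. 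If $\Kd{R/\p}=2$, then $\htt\p=1$, and the argument rests on the classical fact that a regular local ring is a UFD; hence $\p=(f)$ is principal, so $R/\p$ is a hypersurface with $f$ a nonzerodivisor, which gives $\pd R{R/\p}=1=\htt\p$. By Lemma~\ref{lem:relation}, $R/\p$ is Cohen--Macaulay.

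Once the Cohen--Macaulay property is in hand, Lemma~\ref{lem:relation} further delivers $\pd R{R/\p}=\htt\p$ for every prime $\p$ of positive height, and since $R/\p$ is a domain one has $\Ass{R/\p}=\{\p\}$. Hence $K(\p):=R/\p$ meets all the hypotheses of Theorem~\ref{thm:equicm}. Substituting $K(\p)=R/\p$ into the formulas $(*)$ and $(**)$ of that theorem, specialised to $n=3$, yields the two asserted descriptions of $\T_{(Y_1,Y_2,Y_3)}$ and $\C_{(Y_1,Y_2,Y_3)}$.

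The only non-routine step is the $\Kd{R/\p}=2$ case of the Cohen--Macaulay statement, which exploits unique factorisation in the regular local ring to produce a hypersurface presentation of $R/\p$; all the remaining work is a direct appeal to Lemma~\ref{lem:relation} and Theorem~\ref{thm:equicm}.
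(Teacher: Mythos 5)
Your proof is correct and follows essentially the same route as the paper: reduce to the local (or equivalently regular-domain) case, observe that low-dimensional local domains are automatically Cohen--Macaulay, use unique factorisation in a regular local ring to handle the remaining case of a height-one prime, and then feed $K(\p) = R/\p$ into Theorem~\ref{thm:equicm} via Lemma~\ref{lem:relation}. The only organisational difference is that you stratify by $\Kd{R/\p}$ where the paper stratifies by $\htt\p$, and you invoke Lemma~\ref{lem:relation} where the paper cites that a height-one quotient is even Gorenstein; neither changes the substance of the argument.
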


\begin{proof} We can w.l.o.g.\ assume that $R$ is a regular domain. We must then prove that $R/\p$ is Cohen-Macaulay for each $0 \neq \p \in \Spec R$. This is trivial when $\p$ has height $3$. The cases of height $1$ and $2$ are proved by localization: if $\p$ has height $2$, then the localization of $R/\p$ at any maximal ideal is a $1$-dimensional local domain which is necessarily Cohen-Macaulay \cite[p.64]{BrH}. Finally, each regular local ring is a UFD, so its prime ideals of height $1$ are principal, hence $R/\p$ is even Gorenstein for $\p$ of height $1$, see \cite[3.1.19(b)]{BrH}. 
\end{proof}

However, the existence of Cohen-Macaulay modules $K(\p)$ as in Lemma~\ref{lem:translate} in broader generality is closely related to long standing open problems in commutative algebra. One of them is:

\begin{conjHochster} \cite[Conjecture (E)]{H} 
Each complete local ring possesses a maximal Cohen-Macaulay module.
\end{conjHochster}

Since factors of complete local rings are complete, and each complete local ring is a factor of a complete regular local ring, Hochster's Conjecture can equivalently be stated as follows: for each complete regular local ring $R$ and each $\p \in \Spec R$ there exists a maximal Cohen-Macaulay $R/\p$-module $K(\p)$. In \cite[\S3]{H}, Hochster's Conjecture is proved for rings of Krull dimension $\leq 2$. In fact, the canonical $R/\p$-module $K(\p) = \Ext 2R{R/\p}R$ satisfies $\dt K(\p) = \Kd K(\p) = \Kd R/\p = 2$, so $K(\p)$ is a maximal Cohen-Macaulay $R/\p$-module in that case (see \cite[Example 3.2(b)]{Sc}). In general, however, the conjecture remains wide open. 

The following lemma shows that in the complete local case, Hochster's Conjecture implies the existence of Cohen-Macaulay modules as in Lemma~\ref{lem:translate} for each $\p \in \Spec R$:

\begin{lemma} \label{lem:mcm}
Let $R$ be a regular local ring and $\p \in \Spec R$. Assume there exists a maximal Cohen-Macaulay $R/\p$-module $K(\p)$. Then viewed as an $R$-module, $K(\p)$ is Cohen-Macaulay and satisfies $\Ass K(\p) = \{ \p \}$.
\end{lemma}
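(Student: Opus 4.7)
My plan is to handle the two assertions in sequence, starting with the associated prime computation, since it will be needed to pin down the Krull dimension in the Cohen–Macaulay part.

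First, for $\Ass_R K(\p) = \{\p\}$. Because $K(\p)$ is an $R/\p$-module, any $\q \in \Ass_R K(\p)$ must contain $\Ann_R K(\p) \supseteq \p$, and the standard correspondence $\q \mapsto \q/\p$ identifies $\Ass_R K(\p)$ with $\Ass_{R/\p} K(\p)$ (an element's annihilator in $R$ is the preimage of its annihilator in $R/\p$). So it suffices to prove $\Ass_{R/\p} K(\p) = \{0\}$. Here I would invoke the standard fact that a maximal Cohen--Macaulay module $M$ over a Cohen--Macaulay local ring $A$ is unmixed, that is, every $\q \in \Ass_A M$ satisfies $\Kd A/\q = \dt_A M$. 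Since $R/\p$ is a local domain, its only prime of maximal dimension is $0$, so $\Ass_{R/\p} K(\p) = \{0\}$, giving $\Ass_R K(\p) = \{\p\}$.

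Next, for the Cohen--Macaulay statement, I would compare the depth and Krull dimension of $K(\p)$ over $R$ with those over $R/\p$. Since $\p$ annihilates $K(\p)$, a sequence in the maximal ideal $\m$ of $R$ is $K(\p)$-regular if and only if its image in $R/\p$ is $K(\p)$-regular, so $\dt_R K(\p) = \dt_{R/\p} K(\p) = \Kd(R/\p)$ by the maximal Cohen--Macaulay assumption. For the Krull dimension, $\Supp K(\p) \subseteq V(\p)$ since $\p K(\p)=0$, and since the minimal elements of $\Supp K(\p)$ lie in $\Ass_R K(\p) = \{\p\}$ we conclude $\Supp K(\p) = V(\p)$, whence $\Kd_R K(\p) = \Kd(R/\p)$. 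The two numbers agree, so $K(\p)$ is Cohen--Macaulay over $R$ in the sense of Definition~\ref{def:cm} (the local case, as $R$ is local).

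The only potentially delicate step is the unmixedness of a maximal Cohen--Macaulay module over a Cohen--Macaulay local ring, but this is classical (e.g., \cite[Theorem 2.1.2]{BrH}): combining $\dt_A M \le \Kd A/\q$ for any $\q \in \Ass_A M$ with the equality $\dt_A M = \Kd A = \Kd A/\q$ when $A$ is a domain forces $\q = 0$. Everything else is bookkeeping with the $R$-versus-$R/\p$ change of rings.
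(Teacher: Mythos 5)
Your proof is correct. The paper takes a slightly different route to the associated-prime computation: it invokes that a maximal Cohen--Macaulay module over a local domain is torsion-free (citing Eisenbud [21.9]), then embeds $K(\p)$ into a finite free $R/\p$-module (Cartan--Eilenberg, Prop.~VII.2.4), and reads off $\Ass_R K(\p) = \{\p\}$ from the embedding; the Cohen--Macaulay conclusion is then left implicit. You instead apply the unmixedness consequence of the depth inequality $\dt_A M \le \Kd A/\q$ for $\q \in \Ass_A M$ (Bruns--Herzog, Thm.~2.1.2) directly over the domain $A = R/\p$, getting $\Ass_{R/\p} K(\p) = \{0\}$ without passing through an embedding, and then transfer across $R \to R/\p$. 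The two arguments rest on equivalent classical facts (a finitely generated module over a domain has $\Ass = \{0\}$ iff it is torsion-free), so they buy roughly the same; your version has the small advantage of spelling out the depth-versus-dimension comparison for the Cohen--Macaulay claim, which the paper compresses into a single sentence. Both are fine.
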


\begin{proof}
The maximality of $K(\p)$ implies that $K(\p)$ is a torsion-free $R/\p$-module by \cite[21.9]{E}. So $K(\p) \subseteq (R/\p)^n$ for some $n < \omega$ by \cite[Proposition VII.2.4]{CE}. Considered as an $R$-module, $K(\p)$ thus satisfies $\Ass K(\p) = \{ \p \}$ which implies that $K(\p)$ is a Cohen-Macaulay $R$-module.
\end{proof}

To see how limiting the assumption of existence of modules $K(\p)$ from Theorem~\ref{thm:equicm} is, we relate it to Serre's Positivity conjecture. In order to state it, we recall the notion of the intersection multiplicity:

\begin{definition}\label{def:im} Let $R$ be a regular local ring of Krull dimension $d$ and $M, N \in \rfmod R$ be such that $M \otimes_R N$ has finite length. Then the \emph{intersection multiplicity} of $M$ and $N$ is defined as
$$
 \chi(M,N) = \sum_{i = 0}^{d}(-1)^i\,\mbox{length}\,(\Tor{i}{R}{M}{N}).
$$
\end{definition}

\begin{conjSerre} \cite{JPS2000}
Assume that $R$ is a regular local ring of Krull dimension~$d$, and $M, N \in \rfmod R$ are such that $M \otimes_R N$ has finite length. Then

\begin{enumerate}
 \item $\Kd M + \Kd N \leq \Kd R$;
 \item \emph{({\bf Vanishing})} $\Kd M + \Kd N < \Kd R \;\Longrightarrow\; \chi(M,N) = 0$;
 \item \emph{({\bf Positivity})} $\Kd M + \Kd N = \Kd R \;\Longrightarrow\; \chi(M,N) > 0$.
\end{enumerate}
\end{conjSerre}

Serre proved (1) in general, and he also proved (2) and (3) for all regular local rings containing a field. The Vanishing Conjecture was proved by Roberts \cite{PR1985}. The Positivity Conjecture was proved for rings of Krull dimension $\leq 4$ by Hochster \cite{H}, but it is still open in general (However, Gabber proved that $\chi(M,N) \geq 0$).

\begin{lemma} \label{lem:fromH}
Let $R$ be a regular local ring and assume that for each $\p \in \Spec{R}$ there exists $K(\p) \in \rfmod R$ such that 
$\Ass K(\p) = \{\p\}$ and $K(\p)$ is a Cohen-Macaulay module. Then Serre's Positivity Conjecture holds for $R$.
\end{lemma}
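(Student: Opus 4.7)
The plan is to deduce Serre's Positivity from the existence of the modules $K(\p)$ by using them as Cohen-Macaulay surrogates for the cyclic prime modules $R/\p$. By Lemma~\ref{lem:translate} each $K(\p)$ is perfect, with $\pd_R K(\p) = \htt\p$, $\Kd K(\p) = d - \htt\p$, and $\Supp K(\p) = V(\p)$; the Cohen-Macaulayness is the decisive homological feature, as $R/\p$ itself need not be Cohen-Macaulay.

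First I would perform the standard reduction to cyclic prime modules. Given $M, N \in \rfmod R$ with $M \otimes_R N$ of finite length and $\Kd M + \Kd N = d$, take prime filtrations of $M$ and $N$. Additivity of $\chi$ on short exact sequences, combined with Serre's Vanishing Theorem (proved by Roberts), yields
\[
\chi(M, N) \;=\; \sum_{\p, \q} \mathrm{length}_{R_\p}(M_\p)\,\mathrm{length}_{R_\q}(N_\q)\,\chi(R/\p, R/\q),
\]
where the sum ranges over the top-dimensional primes $\p \in \Supp M$ (with $\Kd R/\p = \Kd M$) and $\q \in \Supp N$ (with $\Kd R/\q = \Kd N$); all smaller-dimensional contributions die by Vanishing, since $\Kd R/\p_i + \Kd N < d$ would force $\chi(R/\p_i, N) = 0$. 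Because every summand is nonnegative by Gabber's Nonnegativity Theorem, it suffices to show $\chi(R/\p, R/\q) > 0$ whenever $\htt\p + \htt\q = d$ and $\p + \q$ is $\m$-primary.

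Next I would transfer the problem to the $K(\p)$'s. Since $\Ass K(\p) = \{\p\}$, a prime filtration of $K(\p)$ has factors $R/\p'$ with $\p \subseteq \p'$; the prime $\p$ itself appears exactly $e_\p := \mathrm{length}_{R_\p}(K(\p)_\p) > 0$ times, while any factor with $\p \subsetneq \p'$ has $\Kd R/\p' < \Kd R/\p$, so contributes zero to $\chi(\,\cdot\,, K(\q))$ by Vanishing (paired against the Krull dimension $\Kd K(\q) = d - \htt\q$). Performing the symmetric argument on the second slot gives
\[
\chi(K(\p), K(\q)) \;=\; e_\p\,e_\q\,\chi(R/\p, R/\q),
\]
so the problem reduces to proving $\chi(K(\p), K(\q)) > 0$.

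The main obstacle is precisely this last step. Both $K(\p)$ and $K(\q)$ are perfect Cohen-Macaulay modules with complementary projective dimensions $\pd K(\p) + \pd K(\q) = \htt\p + \htt\q = d$, and $K(\p) \otimes_R K(\q)$ is nonzero of finite length (since $V(\p) \cap V(\q) = V(\p + \q) = \{\m\}$, and Nakayama's lemma guarantees the tensor product is nonzero). Intuitively, $K(\p)$ and $K(\q)$ represent effective cycles on the regular scheme $\Spec R$ meeting properly in top codimension, and $\chi(K(\p), K(\q))$ measures the degree of their intersection, which ought to be positive. The rigorous justification would proceed via Roberts' local Chern character theory, or equivalently via the positivity of Serre's intersection multiplicity for perfect Cohen-Macaulay modules, established from acyclicity of the double complex built from minimal free resolutions of $K(\p)$ and $K(\q)$ of lengths $\htt\p$ and $\htt\q$ (total length $d$). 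This perfect/Cohen-Macaulay case of positivity, unlike the general one, is available — and that is the sole reason for passing from $R/\p$ to $K(\p)$.
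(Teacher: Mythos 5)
Your proof is correct and follows essentially the same strategy as the paper: reduce to $\chi(R/\p,R/\q)$ via prime filtrations and Roberts' Vanishing, then establish $\chi(K(\p),K(\q))>0$ using that both are Cohen-Macaulay of complementary dimension with finite-length tensor product. The one spot you leave soft is exactly where the paper is sharpest: for the positivity of $\chi(K(\p),K(\q))$ you gesture at Roberts' Chern characters or ``acyclicity of the double complex,'' whereas the paper simply cites Serre (\emph{Local Algebra}, V.B.6, Corollary to Theorem 4): for Cohen--Macaulay $M,N$ over a regular local ring with $\dim M+\dim N=\dim R$ and $M\otimes_R N$ of finite length, $\Tor iRMN=0$ for all $i>0$, so $\chi(M,N)=\mathrm{length}(M\otimes_R N)>0$. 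That Tor-vanishing is precisely the ``acyclicity'' you invoke; it is not a formal consequence of the length count $\htt\p+\htt\q=d$ but a theorem of Serre, and citing it directly avoids any appeal to Chern character machinery.
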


\begin{proof}
The idea of the proof is taken from \cite[Theorem 2.9]{H}. By \cite[V. B4, Remark c)]{JPS2000} it is enough to prove the conjecture for $M = R/\p$ and $N = R/\q$ such that $R/\p \otimes_R R/\q$ has finite length. So take such $\p, \q \in \Spec{R}$, and note that a finitely generated module $L$ has finite length $>0$ \iff $\Supp{L} = \{\m\}$,  where $\m$ is the unique maximal ideal of $R$.

Let us now compute $\chi(K(\p), K(\q))$. By \cite[Proposition 9.2.7]{NIEL}, $\Supp{K(\p) \otimes_R K(\q)} = \Supp {K(\p)} \cap \Supp{K(\q)}$. It follows that $K(\p) \otimes_R K(\q)$ has finite length by the note above. By \cite[V. B6, Corollary to Theorem 4]{JPS2000},  $\Tor{i}{R}{K(\p)}{K(\q)} = 0$ for $i > 0$. Thus $\chi(K(\p), K(\q)) = \,\mbox{length}\,(K(\p) \otimes_R K(\q)) > 0$. 

But $K(\p)$ can be filtered by $\{R/\p' \mid \p' \in V(\p)\}$ and similarly for $K(\q)$. Since $\chi(-,-)$ is additive on short exact sequence in both variables and since the vanishing conjecture holds for $R$ (\cite{PR1985}), it follows that $\chi(R/\p, R/\q) > 0$.
\end{proof}

We finish by discussing the relation of our modules $L(\p)$ from Definition~\ref{def:Lp} to the finitely generated maximal Cohen-Macaulay $R/\p$-modules $K(\p)$ whose existence has been conjectured by Hochster. We 
%remind the reader 
know from Proposition~\ref{prop:ass_Lp} and Lemma ~\ref{lem:translate}
that for $\p$ of height $\ge 1$, the module $L(\p)$ from Definition~\ref{def:Lp} is Cohen-Macaulay if it has no associated primes of height $0$. On the other hand, Remark~\ref{rem:Lp_ht0} and Lemma~\ref{lem:relation} tell us that in such a case we could as well take $K(\p) = R/\p$ since it is also Cohen-Macaulay. Hence, the statement of our final result is necessarily more elaborate.

\begin{theorem} \label{thm:Lp_to_Kp}
Let $R$ be a regular local ring, $\p$ be a prime ideal of non-zero height, and denote $L(\p) = \Tr{\Syz{\htt\p-1}{R/\p}}$. Assume there exists a maximal Cohen-Macaulay $R/\p$-module $K(\p) \in \rfmod{R/\p}$.

Then for each $i = 0, \dots, \htt \p - 1$, the smallest resolving class $\mathcal L(\p) \subseteq \rfmod R$ containing $\Syz{i}{L(\p)}$ coincides with the smallest resolving class $\mathcal K(\p) \subseteq \rfmod R$ containing $\Syz{i}{K(\p)}$. In particular, any finitely generated $i$-th syzygy of the $R$-module $K(\p)$ is a direct summand in a finitely $\mathcal E$-filtered module where
\[
\mathcal E = \{ \Syz{j}{L(\p)} \mid j = i, \dots , \htt \p - 1 \} \cup \{ R \}.
\]
\end{theorem}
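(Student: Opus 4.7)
The plan is to show $\mathcal L(\p) = \mathcal K(\p)$ by proving that $\Syz{i}{L(\p)}$ and $\Syz{i}{K(\p)}$ induce the same Tor-orthogonal (i.e., the same cotilting class in the sense of Lemma~\ref{lem:cofinite}). By Lemma~\ref{lem:cofinite} combined with Lemma~\ref{lem:resolv}, the smallest resolving class in $\rfmod R$ containing a finitely generated module of finite projective dimension is determined by the associated cotilting class via elementary duality. Moreover, by dimension shifting (Remark~\ref{rem:C2}) and Corollary~\ref{cor:cx_shifting}, $\Syz{i}{M}^\intercal = (M^\intercal)_{(i+1)}$ is precisely the cotilting class corresponding to the truncation of the defining sequence $(Y_1, \dots, Y_k)$ of $M^\intercal$ starting from $Y_{i+1}$. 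Hence it suffices to prove the identity for $i=0$, i.e., $L(\p)^\intercal = K(\p)^\intercal$.

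I would first identify $K(\p)^\intercal$. Lemma~\ref{lem:mcm} ensures that $K(\p)$, regarded as an $R$-module, is Cohen-Macaulay with $\Ass{K(\p)} = \{\p\}$, and Lemma~\ref{lem:translate} then gives $\pd{R}{K(\p)} = k := \htt{\p}$. Lemma~\ref{lem:gor_tor_perp} presents $K(\p)^\intercal$ as $\C_{(Y'_1, \dots, Y'_k)}$ with $Y'_i \subseteq \Supp{K(\p)} = V(\p)$, and part~(ii) of that lemma yields $\p \in Y'_k$ since $\p \in \Ass{K(\p)} \setminus \Ass R$ (as the regular local $R$ is a domain and $k \ge 1$). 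Specialization closure combined with the chain $Y'_k \subseteq \cdots \subseteq Y'_1$ from Definition~\ref{def:cx} then forces $Y'_i = V(\p)$ for every $i$.

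For $L(\p)^\intercal = \C_{(Y_1, \dots, Y_k)}$, the inclusion $V(\p) \subseteq Y_i$ follows exactly the same way from $\p \in \Ass{L(\p)} \setminus \Ass R$ (Proposition~\ref{prop:ass_Lp}). The main obstacle is the reverse inclusion $Y_i \subseteq V(\p)$: because $\Ass{L(\p)}$ may contain the generic prime, $\Supp{L(\p)}$ need not be contained in $V(\p)$, so Lemma~\ref{lem:gor_tor_perp} alone does not pin down $Y_i$. My plan is to compute directly the Bass invariants $\mu_j(\q, L(\p))$ for $\q \notin V(\p)$. Using the identification $\Syz{j}{L(\p)} \cong \Tr{\Syz{k-1-j}{R/\p}}$ from the proof of Proposition~\ref{prop:cotilt_goren}, together with Lemma~\ref{lem:pdn}(iii) applied to $U = R/\p$, one gets the functorial isomorphism $\Ext{j}{R}{L(\p)}{-} \cong \Tor{k-j}{R}{-}{R/\p}$ for $1 \le j \le k$; localizing at any $\q \not\supseteq \p$ forces $(R/\p)_\q = 0$, hence $\mu_j(\q, L(\p)) = 0$ in that range. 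For $j = 0$, Proposition~\ref{prop:ass_Lp} restricts $\Ass{L(\p)}$ outside $\{\p\}$ to $\Ass R = \{0\}$, whose unique element has height $0 < i$. This yields $Y_i \subseteq V(\p)$.

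Once $L(\p)^\intercal = \C_{(V(\p), \dots, V(\p))} = K(\p)^\intercal$ is established, the reduction above gives $\mathcal L(\p) = \mathcal K(\p)$ for every admissible $i$. The ``in particular'' clause is then an application of Corollary~\ref{cor:parres}: the truncated sequence $(V(\p), \dots, V(\p))$ of length $k - i$ has $\bar Y_j = \{\p\}$ for every $j$, and its generating set according to Corollary~\ref{cor:parres} is $\{\Tr{\Syz{j-1}{R/\p}} \mid j = 1, \dots, k-i\} \cup \{R\}$; the re-indexing via $\Syz{j}{L(\p)} \cong \Tr{\Syz{k-1-j}{R/\p}}$ identifies this with the set $\mathcal E$ in the statement. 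Hence $\Syz{i}{K(\p)} \in \mathcal K(\p) = \mathcal L(\p)$ is a direct summand of a finitely $\mathcal E$-filtered module by Hill's lemma, just as in the proof of Corollary~\ref{cor:parres}.
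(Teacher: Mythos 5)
Your overall strategy matches the paper's: identify the cotilting class induced by both $L(\p)$ and $K(\p)$ as $\C_{(V(\p),\dots,V(\p))}$ (truncated appropriately for each $i$), then pass to the corresponding resolving class via Lemma~\ref{lem:resolv}, and invoke Corollary~\ref{cor:parres} for the ``in particular''. The paper's proof reads the common $(k-i)$-tilting class $L^\perp = K^\perp = \T_{(Y_1,\dots,Y_{\htt\p})}$ directly off Corollary~\ref{cor:optimized} (for $L = \Syz{i}{L(\p)}$) and Theorem~\ref{thm:equicm} (for $K = \Syz{i}{K(\p)}$), avoiding any re-computation of Bass invariants.

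There is one genuine gap in your argument, in the step ``localizing at any $\q \not\supseteq \p$ forces $(R/\p)_\q = 0$, hence $\mu_j(\q,L(\p)) = 0$ in that range.'' The isomorphism $\Ext{j}{R}{L(\p)}{-} \cong \Tor{k-j}{R}{-}{R/\p}$ places $L(\p)$ in the \emph{first} variable of $\mathrm{Ext}$, whereas the Bass invariant $\mu_j(\q,L(\p)) = \dim_{k(\q)}\Ext{j}{R_\q}{k(\q)}{L(\p)_\q}$ has $L(\p)_\q$ in the \emph{second} variable. What localizing your isomorphism actually shows is that $\Ext{j}{R_\q}{L(\p)_\q}{-}$ vanishes for $1\le j\le k$, i.e.\ that $L(\p)_\q$ is a free $R_\q$-module. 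The desired vanishing then follows, but needs the extra observation that consequently $\mu_j(\q,L(\p))$ is a multiple of $\mu_j(\q,R) = \delta_{j,\,\htt\q}$, so $\mu_j(\q,L(\p)) = 0$ for $j < \htt\q$, which is what the inclusion $Y_i \subseteq V(\p)$ requires. Also, the ``main obstacle'' you describe is already dispatched in the paper: the proof of Proposition~\ref{prop:ass_Lp} establishes $L(\p)^\intercal = \C_{(V(\p),\dots,V(\p))}$ directly from Proposition~\ref{prop:cotilt_goren} (applied to the constant sequence $V(\p)$), so no hands-on Bass-invariant computation is needed; citing that would close the gap cleanly.
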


\begin{proof}
Note that $\p\not\in\Ass R$. Let us fix $i \in \{0, \dots, \htt{\p} -1\}$ and denote $L = \Syz{i}{L(\p)}$ and $K = \Syz{i}{K(\p)}$. Note that $L = \Tr{\Syz{\htt\p-i-1}{R/\p}}$, as observed in the proof of Proposition~\ref{prop:cotilt_goren}. Using Corollary~\ref{cor:optimized} and Theorem~\ref{thm:equicm} we infer that
\[ L^\perp = K^\perp = \T_{(Y_1, \dots, Y_{\htt\p})} \]
for $Y_1 = \dots = Y_{\htt \p -i} = V(\p)$ and $Y_{\htt \p -i +1} = \dots = Y_{\htt \p} = \emptyset$. The claim follows by Corollary~\ref{cor:parres} and its proof.
\end{proof}

% ==============================================================================
\section*{Acknowledgment}

Our thanks for valuable advice are due to Hailong Dao (concerning relations between Hochster's and Serre's conjectures) and Santiago Zarzuela (on Cohen-Macaulay and canonical modules).

% ==============================================================================
\bibliographystyle{abbrv}
\bibliography{ncotilting+}

\end{document}